\documentclass[11pt]{article}

\usepackage[english]{babel}
\usepackage[utf8x]{inputenc}
\usepackage[colorinlistoftodos]{todonotes}
\usepackage{float}

\usepackage{amsmath, amssymb, amsthm, cleveref}
\usepackage{graphicx}

\usepackage{mathrsfs}

\usepackage{multicol}
\usepackage{footmisc}

 \newtheorem{theorem}{Theorem}[section]
 \newtheorem{corollary}[theorem]{Corollary}
 
 \newtheorem{lemma}[theorem]{Lemma}
 \newtheorem{proposition}[theorem]{Proposition}

 \theoremstyle{definition}
 \newtheorem{definition}[theorem]{Definition}
 \newtheorem{example}[theorem]{Example}
 \theoremstyle{remark}
 \newtheorem{remark}[theorem]{Remark}

\numberwithin{equation}{section}

 \newcommand{\address}[1]
 { \vspace{-2em}\begin{center}
  \footnotesize{#1}
   \end{center}}

  \newcommand{\email}[1]
   {\vspace{-1em} \begin{center}
   \footnotesize{{\it E-mail address:} \texttt{#1}}
   \end{center}}

  \newcommand{\subjclass}[1]{\footnotesize{{\em 2020 AMS Mathematics
 Subject Classification:} {\bf #1}}}

 \newcommand{\Nat}{\mathbb{N}}
 \newcommand{\Int}{\mathbb{Z}}

 \newcommand{\set}[1]{\left\{#1\right\}}
 \newcommand{\Set}[2]{\set{#1\, :\, #2}}

\newcommand{\Mod}[3]{#1\equiv #2 \pmod{#3}}
\newcommand{\nMod}[3]{#1\not\equiv #2\ (\textrm{mod}\ #3)}
\newcommand{\Div}[2]{#1\,\mid\, #2}
\newcommand{\nDiv}[2]{#1\,\nmid\, #2}

\title{Counting degrees of vertices in near Goldbach graphs}

\author{Shamik Ghosh$^1$\thanks{Corresponding author}\ and Souradeep De$^2$}

\date{}

\begin{document}
\maketitle              

\address{
$^1$Department of Mathematics, Jadavpur University, Kolkata-700032, India\\
\email{ghoshshamik@yahoo.com}
\and
 $^2$Department of Computer Science and Engineering, Jadavpur University, Kolkata-700032, India\\
\email{souradeepde05@gmail.com}
}
%
%
%

\begin{abstract}
A near Goldbach graph is a simple undirected graph whose vertex set consists of all positive even integers and there is an edge between two vertices $a,b$ if and only if $\frac{a+b}{2}, \frac{|a-b|}{2}$ are either odd primes or $1$. A finite near Goldbach graph $G(n)$ has the vertex set $\Set{x\in 2\Nat}{x\leq 2n}$ with the same adjacency rule. In this paper, we obtain two exact formulas for the degree of the even positive integer $x$ in $G(x/2)$. We compute a function $\eta(x)=\prod\limits_{\Div{p}{x},\, p>2} \frac{p-1}{p-2}\, \frac{xe^{-0.183407}}{(\log\, x)^2}$ that approximates the degree of $x$ in $G(x/2)$ for a large even positive integer $x$. Finally, we introduce the concept of a nearly independent set of events and show that if the set of divisibility events for a large even integer $x$ is nearly independent, then $x$ can be expressed as the sum of two odd primes.

\vspace{1em}
\noindent
\subjclass{05C07, 05C30, 11K65, 11N05, 11P32}\\
 Keywords: bipartite graph; goldbach conjecture; goldbach graph; odd-even graph; prime number; prime multiple missing graph.
\end{abstract}



\section{Introduction}

The concept of a Goldbach graph was introduced in 2021, and it is shown that the connectedness of finite Goldbach graphs is equivalent to the famous Goldbach conjecture \cite{DGGS}. In 2025, a slightly modified version of the graph, namely near Goldbach graph, proved to be useful in the study of Goldbach Conjecture \cite{PMMG}. We will see that, if the degree of the highest vertex (even number) $x$ of a finite near Goldbach graph is greater than $1$ then $x$ can be expressed as a sum of two distinct odd primes. This motivates us to count degrees of vertices in near Goldbach graphs. These graphs are lying in a larger class of graphs, called odd-even graphs.

\vspace{1em}
\noindent
A simple undirected graph $G(A,O)=(V,E)$ is an {\em odd even graph} \cite{DGGS} whose vertex set $V=A$ and the edge set $E=\Set{ab}{\frac{a+b}{2}, \frac{|a-b|}{2} \in O,\, a,b\in V}$ for a set $A\subseteq\mathscr{E}=\Set{2n}{n\in\Nat\cup\set{0}}$  and a set $O\subseteq\mathscr{O}=\Set{2n+1}{n\in\Nat\cup\set{0}}$. This graph is necessarily a bipartite graph where partition sets are $X=\Set{x\in V}{\Div{4}{x}}$ and $Y=\Set{x\in V}{\nDiv{4}{x}}$. Interestingly, every bipartite graph $G=G(A,O)$ for some $A\subseteq \mathscr{E}$ and $O\subseteq \mathscr{O}$ \cite{DGGS}. In particular, if $A=\mathscr{E}$ and $O$ consists of all odd prime numbers, then $G(A,O)$ is the {\em (infinite) Goldbach graph} \cite{DGGS}. A {\em finite Goldbach graph} $G_n=G(A,O)$ with $A=\Set{2m}{m\in\Nat\cup\set{0},\, m\leq n}$ and $O=\Set{p}{p\text{ is prime, } 2<p<2n}$ \cite{DGGS}. These graphs are named so, possibly, due to the following theorem:

\begin{theorem}\label{thmgb0} {\em \cite{DGGS}}
An even integer $x\geq 5$ can be expressed as a sum of two odd primes if and only if $G_n$ is connected for all $n\geq 7$.
\end{theorem}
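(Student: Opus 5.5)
The statement is the equivalence ``every even integer $x\geq 6$ is a sum of two odd primes if and only if $G_n$ is connected for all $n\geq 7$''. I would prove it directly from the adjacency rule, without any analytic input. The first step is to decode the edges of $G_n$. For vertices $a>b$, set $p=\frac{a+b}{2}$ and $q=\frac{a-b}{2}$; then $a=p+q$ and $b=p-q$. So $ab$ is an edge exactly when $a=p+q$ and $b=p-q$ for odd primes $p\geq q$ with $p<2n$. The degenerate case $p=q$ corresponds to an edge between $a=2p$ and $b=0$.

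For ($\Leftarrow$), fix an even $x\geq 6$. The values $x=6,8,10,12$ are checked by hand: $3+3$, $5+3$, $5+5$, $7+5$. For $x\geq 14$, take $n=x/2\geq 7$, so $x$ is the largest vertex of $G_n$. Since $G_n$ is connected and has more than one vertex, $x$ has a neighbour $b$, and necessarily $b<x$. By the decoding above, $x=p+q$ for odd primes $p,q$, which is what we want.

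For ($\Rightarrow$), assume the Goldbach property and fix $n\geq 7$. I would show by strong induction on $m$ that every vertex $2m\leq 2n$ lies in the component of $0$.

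First I handle the small vertices explicitly, using only vertices $\leq 14$ and primes $\leq 11<2n$:
\begin{itemize}
\item $6$ and $10$ are adjacent to $0$, since $6=3+3$ and $10=5+5$.
\item $14$ is adjacent to $0$, since $14=7+7$.
\item $4$ is adjacent to $10$, via $p=7,q=3$.
\item $8$ is adjacent to $14$, via $p=11,q=3$.
\item $2$ is adjacent to $8$, via $p=5,q=3$.
\item $12$ is adjacent to $2$, via $p=7,q=5$.
\end{itemize}
Hence $0,2,4,\dots,14$ all lie in one component.

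For a general vertex $2m\geq 16$, write $2m=p+q$ with odd primes $p\geq q$; both are less than $2m\leq 2n$, so they belong to $O$. If $p=q$, then $2m$ is adjacent to $0$. Otherwise $2m$ is adjacent to $p-q$, a smaller positive even vertex, which is in the component of $0$ either by induction or by the base cases. Thus $G_n$ is connected.

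The only genuine obstacle is that the naive ``descend to a smaller vertex'' argument can cycle among the small vertices. For example, $8$ and $2$ are adjacent only to each other among vertices $\leq 8$, and $4$ reaches $0$ only through $10$. This is precisely why $n\geq 7$ is needed: it lets the base cases use upward edges to $10$ and $14$.
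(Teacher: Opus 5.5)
Your proof is correct and follows essentially the same route as the argument the paper gives for the near Goldbach analogue, which it says mirrors the proof of this theorem. In one direction, a neighbour $b<x$ of the top vertex $x=2n$ decodes to $x=\frac{x+b}{2}+\frac{x-b}{2}$; in the other, each vertex $2m=p+q$ is joined to $p-q$ (or to $0$ when $p=q$), and your explicit check that $0,2,\ldots,14$ are connected plays the role of the base case $G_7$ in the paper-style induction on $n$ via $G_n\subseteq G_{n+1}$.
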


\noindent
The study of the Goldbach graph is difficult as the odd set consists of (odd) prime numbers. It would be much easier to handle same type of graphs where odd sets have a definite pattern. For any natural number $n$, a multiple $mn$ is said to be {\em non-trivial} if $m>1$, $m\in\Nat$. Let $n\in\Nat$ and $p$ be an odd prime. A {\em prime multiple missing graph} \cite{PMMG} $G(p,n)=G(A,O)$ with $A=\Set{x\in 2\Nat}{x\leq 2n}$ and $O$ consists of odd positive integers less than $2n$, which are not non-trivial multiples of $p$. The following is a structure theorem for graphs $G(3,n)$.

\begin{theorem}\label{thmg3n}{\em \cite{PMMG}}
The graph $G(3,n)$ $(n\geq 6)$ is a bipartite graph. The graph consists of an independent set and a path $P$ and vertices of $P$ are alternatively adjacent to all members of the independent set those belong to the opposite partite sets. 
\end{theorem}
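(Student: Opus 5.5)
The plan is to translate adjacency in $G(3,n)$ into arithmetic conditions on half the vertex labels, and then read off the structure by sorting vertices by their residue modulo $3$. Write every vertex as $x=2u$ with $1\le u\le n$. For two vertices $2u,2v$ we have $\frac{a+b}{2}=u+v$ and $\frac{|a-b|}{2}=|u-v|$. So $2u\sim 2v$ exactly when both $u+v$ and $|u-v|$ are odd and neither is a non-trivial multiple of $3$. The bound $u+v<2n$ is automatic for distinct $u,v\le n$, and bipartiteness is already known for any odd-even graph. Oddness of $u+v$ is the same as $u,v$ having opposite parity. Since $x\in X$ iff $4\mid x$ iff $u$ is even, the partite set of $2u$ is just the parity of $u$.

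Next I would do a case analysis on $(u \bmod 3, v \bmod 3)$.
\begin{itemize}
\item If $3\mid u$ and $3\mid v$, then both $u+v$ and $|u-v|$ are multiples of $3$, so both would have to equal $3$. This is impossible. Hence $I=\Set{2u}{3\mid u}$ is an independent set.
\item If $3\mid u$ and $3\nmid v$, then neither $u+v$ nor $u-v$ is divisible by $3$. So $2u\sim 2v$ iff $u,v$ have opposite parity, i.e.\ iff $2u,2v$ lie in opposite partite sets.
\item If $u\equiv v\not\equiv 0 \pmod 3$, then $3\mid u-v$, so we need $|u-v|=3$. Then $u+v$ is odd and $\not\equiv 0$, hence it is allowed.
\item If $u\equiv 1$ and $v\equiv 2 \pmod 3$, then $3\mid u+v$, forcing $u+v=3$, i.e.\ $\{u,v\}=\{1,2\}$. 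Then $|u-v|=1\in O$, so this is an edge.
\end{itemize}

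From these cases the graph induced on the non-multiples of $3$ is explicit. The class $u\equiv 1$ forms the chain $1,4,7,10,\dots$, and the class $u\equiv 2$ forms the chain $2,5,8,\dots$, in each case joining consecutive terms differing by $3$. The single edge $1$--$2$ glues the two chains at their ends. Doubling, $P$ is the path $\dots,16,10,4,2,8,14,\dots$ (in $x$-labels) through all vertices $2u$ with $3\nmid u$. Consecutive terms of $P$ differ by $3$ or by $1$ in $u$, so $u$ alternates in parity along $P$, and $P$ alternates between $X$ and $Y$. By the second case, every vertex of $P$ is adjacent to exactly those members of $I$ lying in the opposite partite set. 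This is precisely the claimed description.

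The only delicate point I expect is bookkeeping rather than a real obstacle: checking that the exceptional value $3\in O$ creates no extra edges beyond the $1$--$2$ edge and the difference-$3$ edges. Its only other potential use, $u+v=3$ and $u-v=\pm 3$ simultaneously, was excluded above. The hypothesis $n\ge 6$ only serves to make the picture non-degenerate: $I$ then contains $6$ and $12$, one in each partite set, and $P$ genuinely uses both chains. I would remark on this rather than use it in the argument.
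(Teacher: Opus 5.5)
Your proof is correct and complete. Halving the labels to $u,v$ reduces adjacency to: $u,v$ of opposite parity, and neither $u+v$ nor $|u-v|$ a non-trivial multiple of $3$. Your four residue cases modulo $3$ then cover every pair of vertices, and together they show three things:
\begin{enumerate}
\item the multiples of $6$ form an independent set;
\item the remaining vertices induce exactly the path $\ldots,20,14,8,2,4,10,16,\ldots$, that is, two chains of step $6$ in the $x$-labels glued by the single edge $2$--$4$;
\item each path vertex is adjacent to precisely those multiples of $6$ that lie in the opposite partite set.
\end{enumerate}

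The paper does not prove this theorem itself. It quotes the result from \cite{PMMG} and only records the resulting picture in Remark~\ref{pmm3rem}, phrased with residues modulo $12$ ($S_1=\{12m+6\}\subseteq Y$, $S_2=\{12m\}\subseteq X$). Your description translates directly into that one: $3\mid u$ with $u$ odd or even gives $x\equiv 6$ or $x\equiv 0 \pmod{12}$, respectively. Your version also identifies the path and its vertex ordering explicitly, which the remark leaves to the figure.
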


\begin{remark}\label{pmm3rem}
Let $n\geq 6$. In the prime multiple missing graph $G=G(3,n)$, the vertex set $V(G)=\Set{2m}{m\in\Nat,\ m\leq n}$. The set $S=\Set{6m}{m\in\Nat}\cap V$ is an independent set. Also the graph is bipartite in which the partition sets are $X=\Set{x\in V}{\Div{4}{x}}$ and $Y=\Set{x\in V}{\nDiv{4}{x}}$. Thus the independent set is further partitioned into two independent sets (in two partite sets), namely, $S_1=\Set{12m+6}{m\in\Nat\cup\set{0}}\cap Y$ and $S_2=\Set{12m}{m\in\Nat}\cap X$. All other even numbers (vertices) form a path $P$ among which multiples of $4$ are adjacent with all members of $S_1$ and other vertices are adjacent to all vertices in $S_2$ (see Figure \ref{gd318}, where bold lines in the figure show adjacency of a path vertex to all vertices in the set $S_1$ or $S_2$).
\end{remark}

\begin{figure}[ht]
\begin{center}
\includegraphics[scale=0.6]{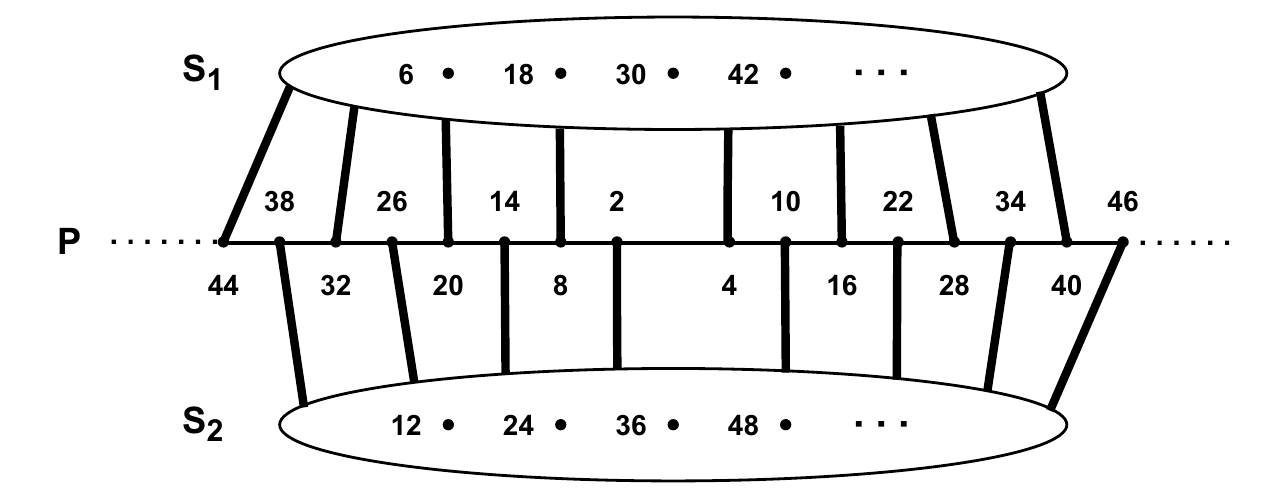}
\caption{A diagram showing prime multiple missing graphs $G(3,n)$}\label{gd318}
\end{center}
\end{figure}

\noindent
Now it is clear that a prime $p$ is not a non-trivial multiple of other primes less than $p$. Thus finite intersection of prime multiple missing graphs gives rise to another class of graphs. For each $n\in\Nat$, a {\em (finite) near Goldbach graph} $G(n)=G(A,O)$ where $A=\Set{y\in 2\Nat}{y\leq 2n}$ and $O=\Set{q}{q\text{ is prime},\ 2<q<2n}\cup\set{1}$ \cite{PMMG}. In the following we write $G(3,5,\ldots,p;n)$ to mean $G(3,n)\cap G(5,n)\cap\cdots\cap G(p,n)$. 

\vspace{1em}
\noindent
By definition, $G(n)=G(3,n)$ for all $n\leq 12$. For $n>12$, we have the following proposition:

\begin{proposition}\label{propngb}{\em \cite{PMMG}}
Let $n\in\Nat$, $n>12$. The near Goldbach graph $G(n)=G(3,5,\ldots,p;n)$ where $p$ is the highest prime such that $p^2<2n$. 
\end{proposition}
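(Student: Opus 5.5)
Since all the graphs $G(n)$ and $G(3,5,\ldots,p;n)$ have the same vertex set $A=\Set{y\in 2\Nat}{y\leq 2n}$, it suffices to show that their odd sets coincide. An edge $ab$ is determined entirely by whether $\frac{a+b}{2}$ and $\frac{|a-b|}{2}$ lie in the odd set. Moreover, the intersection of odd-even graphs on a common vertex set is the odd-even graph whose odd set is the intersection of the odd sets. So $G(3,5,\ldots,p;n)=G(A,O')$, where $O'$ consists of the odd positive integers less than $2n$ that are not non-trivial multiples of any odd prime $q\leq p$. One small point to check: both $\frac{a+b}{2}$ and $\frac{|a-b|}{2}$ are less than $2n$, or at most $2n$. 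A value equal to $2n$ is even and therefore never odd, so the bound $<2n$ causes no discrepancy between the two definitions.

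The core of the argument is to show that $O'=O=\set{1}\cup\Set{q}{q\text{ prime},\ 2<q<2n}$.

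\textbf{Inclusion $O\subseteq O'$.} The integer $1$ is not a non-trivial multiple of anything. An odd prime $q<2n$ is a multiple of an odd prime $r$ only when $r=q$, and then the multiplier is $1$, which is trivial.

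\textbf{Inclusion $O'\subseteq O$.} Let $m\in O'$, and suppose $m>1$ is composite. Then $m$ is odd, so it has an odd prime factor $r$ with $r^2\leq m<2n$. Since $p$ is the largest prime with $p^2<2n$, we get $r\leq p$. Because $m$ is composite, $m=r\cdot(m/r)$ with $m/r>1$, so $m$ is a non-trivial multiple of $r$. This contradicts $m\in O'$. Hence $m$ is $1$ or an odd prime less than $2n$.

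The hypothesis $n>12$ guarantees $2n>25$, so $p\geq 5$ and the list $3,5,\ldots,p$ is well-defined and nonempty. For smaller $n$, the statement reduces to the remark that $G(n)=G(3,n)$. The only delicate step is the choice of bound: we need $r^2\leq m$ together with $m\leq 2n-1<2n$ to force $r\leq p$. This holds because $p$ is defined by the strict inequality $p^2<2n$. If $r^2=m$, then $r^2<2n$ still holds, so the case of squares of primes is covered. This also shows that no larger primes are needed.
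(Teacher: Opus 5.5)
Your proof is correct and complete. The paper does not prove this proposition itself. It quotes the result from \cite{PMMG}, with only the informal remark that a prime is not a non-trivial multiple of smaller primes, which is your inclusion $O\subseteq O'$. Your reduction to equality of the odd sets supplies the reverse inclusion, via the sieve observation that an odd composite $m<2n$ has an odd prime factor $r$ with $r^2\le m<2n$, hence $r\le p$; this is the standard argument. Your remark about values equal to $2n$ is harmless but unnecessary, since both odd sets are defined with the same bound $<2n$ and you compare them directly.
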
 

\noindent
Near Goldbach graphs $G(n)$ have some good properties those were missing in (finite) Goldbach graphs, namely that they are not only connected, but also Hamiltonian for small even $n$ and have Hamiltonian paths for all $n\leq 58$ (and also for $n=500$) as shown in \cite{DGGS}. Indeed, following the proof of Theorem \ref{thmgb0} one can obtain the result below. Here we provide a proof for completion.

\begin{theorem}
$G(n)$ is connected for all $n\in\Nat$ if and only if every even integer greater than $2$ can be written as a sum of two distinct positive integers which are odd primes or one.
\end{theorem}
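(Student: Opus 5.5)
The plan is to reduce connectivity of $G(n)$ to a statement about the single new vertex $2n$. The key observation is that the neighbours of $2n$ in $G(n)$ correspond exactly to representations $2n=p+q$ with $p,q$ distinct elements of the set $\mathcal{P}=\set{1}\cup\Set{q}{q\text{ an odd prime}}$. I would then prove the two directions separately: the forward direction reads off such a representation from a neighbour of $2n$, and the backward direction is an induction on $n$ that attaches the new vertex $2n$ to $G(n-1)$.

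\emph{Step 1 (neighbours of the top vertex).} Let $a\in 2\Nat$ with $a<2n$, and put $p=\frac{2n+a}{2}$ and $q=\frac{2n-a}{2}$. Then $p+q=2n$ and $p>q\geq 1$, so $p\neq q$. We also have $p\leq 2n-1<2n$, so the restriction ``$<2n$'' in the definition of $O$ is automatically met. Hence $a\sim 2n$ in $G(n)$ if and only if $p,q\in\mathcal{P}$.

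Conversely, suppose $2n=p+q$ with $p>q$ and $p,q\in\mathcal{P}$. Both are odd, so $a=p-q$ is a positive even integer, and $a\leq 2n-2$ because $q\geq 1$. Moreover $\frac{2n+a}{2}=p$ and $\frac{2n-a}{2}=q$, so $a$ is a neighbour of $2n$. Thus $\deg(2n)\geq 1$ in $G(n)$ if and only if $2n$ is a sum of two distinct members of $\mathcal{P}$.

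\emph{Step 2 (forward direction).} Assume $G(n)$ is connected for every $n$, and let $x=2n>2$, so $n\geq 2$. Then $G(n)$ has at least two vertices. Connectivity forces the vertex $2n$ to have a neighbour, and Step 1 gives the required representation of $x$. As a sanity check, $4=1+3$ corresponds to the edge $2\sim 4$ in $G(2)$.

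\emph{Step 3 (backward direction, by induction on $n$).} $G(1)$ is a single vertex and hence connected. For $n\geq 2$, I would first show that $G(n-1)$ is the subgraph of $G(n)$ induced by the vertices $\leq 2n-2$. The two graphs use different odd sets, so this needs a check. For distinct vertices $a,b\leq 2n-2$ we have $\frac{a+b}{2}\leq 2n-3$ and $\frac{|a-b|}{2}<2n-2$. So both quantities are below $2(n-1)$, and the extra primes $2n-3<q<2n-1$ admitted into $O$ for $G(n)$ are irrelevant to adjacency among these vertices. Consequently edges among old vertices are unchanged. By the induction hypothesis $G(n-1)$ is connected. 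By assumption and Step 1, $2n$ has a neighbour $a\leq 2n-2$ in it, so $G(n)$ is connected.

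The only delicate point is this induced-subgraph comparison, i.e.\ checking that enlarging $O$ from $G(n-1)$ to $G(n)$ creates no new edges among the old vertices. Everything else is direct bookkeeping with the bijection $a\leftrightarrow(p,q)$.
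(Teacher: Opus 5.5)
Your proposal is correct and follows the paper's proof. The forward direction reads off a representation $x=p+q$ from a neighbour of the top vertex $x$ in $G(x/2)$, and the converse is the same induction attaching the new vertex $2n$ to $G(n-1)$ via $a=p-q$. Your explicit check that $G(n-1)$ is an induced subgraph of $G(n)$ supplies a step the paper uses silently. One small slip: the only possible new element of $O$ is $2n-1$, so your range should read $2n-3<q<2n$; this does not affect the argument, since you showed $\frac{a+b}{2}\le 2n-3$.
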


\begin{proof}
Suppose $G(n)$ is connected for all $n\in\Nat$. We have $4=3+1$ and $6=5+1$. Let $x>6$ be an even integer. Let $P=\Set{p}{p\text{ is prime},\ 2<p<x}\cup\set{1}$. Now since $G(x/2)$ is connected, the vertex $x$ is adjacent to some other vertex $y<x$ in $G(x/2)$. Then $p=\frac{x+y}{2},q=\frac{x-y}{2}\in P$. Thus $x=p+q$ where $p$ and $q$ are odd primes or one and $p\neq q$ as $y>0$.

\vspace{1em}
\noindent
Conversely, let every even integer greater than $2$ be a sum of two distinct positive integers which are odd primes or one. Now $G(1)\cong K_1$, $G(2)\cong K_2$, $G(3)\cong P_3$ (a path of length $3$) are connected. Suppose $G(n)$ is connected for some $n\geq 3$. Let $x=2(n+1)$. Then $x=p+q$ for some positive integers $p>q$ which are either odd primes or one. Let $y=p-q$. Then $0<y<x$ which implies $2\leq y\leq 2n$. Now $p=\frac{x+y}{2},q=\frac{x-y}{2}$. Thus $x$ is adjacent to $y$ in $G(n+1)$. Hence $G(n+1)$ is connected.
\end{proof}

\noindent
Let $x$ be an even positive integer. Let us consider the near Goldbach graph $G(x/2)=(V,E)$, where the vertex set $V=V(G(x/2))=\Set{2n}{n\in\Nat,\ n\leq\frac{x}{2}}=\set{2,4,6,\ldots,x}$. Let $y\in V$. Then
$$x\leftrightarrow y \Longleftrightarrow\frac{x+y}{2}=p,\ \frac{x-y}{2}=q\Longleftrightarrow x=p+q,$$
where $p,q$ are odd primes or $1$. Therefore, The degree $d(x)$ of $x$ in the near Goldbach graph $G(x/2)$ counts the number of (unordered) ways that an even positive integer $x$ can be expressed as a sum of two distinct positive integers which are odd primes or $1$. Thus, if the degree of $x$ in $G(x/2)$ is positive then $x$ can be expressed as a sum of two numbers which are either odd primes or $1$ and the degree is greater than $1$ means $x$ can be expressed as a sum of two odd primes. 

\vspace{1em}
\noindent
In this paper, the above fact motivates us to find the degree of $x$ in $G(x/2)$ and to estimate these degrees for large $x$. We found two exact formulas for degrees of even positive integers $x$ in $G(x/2)$ combinatorially, may be for the first time in the literature. In \S 2, first we find an exact formula for degree of $x=12n+4$ ($n\in\Nat$) in the near Goldbach graph $G(x/2)$ by means of combinatorial calculations which is approximated in a simple compact probabilistic form in \S 3. Next we obtain two general formulas for degrees of even positive integers $x$ in $G(x/2)$ for all possible forms $12n+k$ for $k=0,2,4,6,8,10$ in \S 4. Combining all these results, in \S 5, we compute a function $\eta(x)$ in closed form that approximates the degree of a large even positive integer $x$ in $G(x/2)$ using number theoretic tools. We show that $\eta(x)$ is near to a similar function conjectured by Hardy and Littlewood in 1923. In the final section 6, we obtain two sufficient conditions so that an even positive integer can be expressed as a sum of two odd primes. 

\vspace{1em}
\noindent
For literature related to Goldbach conjecture one may go through \cite{chen,HL,RICH,TAO,VIN} and for graph-theoretic concepts and definitions one may consult \cite{DBW}.

\section{Counting degrees of $x=12n+4$ in near Goldbach graphs $G(x/2)$}\label{sec12n4}

In the near Goldbach graph $G(n)$, ($n>12$) the independent set $S_1\cup S_2$ as defined in $G(3,n)$ (see Remark \ref{pmm3rem}) remains the same, but the path $P$ is broken to several pieces and vertices of $P$ are now not adjacent to all vertices in  $S_1$ or $S_2$ as before because the odd set does not contain non-trivial multiples of all odd primes $3\leq p<\sqrt{2n}$. Now by the definition of $G(3,n)$ we get that any neighbor of a vertex (positive even integer) $x$ in $G(n)$ must also be a neighbor of $x$ in $G(3,n)$. In other words, any non-neighbor of $x$ in $G(3,n)$ is also a non-neighbor of $x$ in $G(n)$. This helps us a lot to restrict our search for neighbors within the neighbor sets in $G(3,n)$ only.

\vspace{1em}
\noindent
Let us consider the near Goldbach graph $G(x/2)$ for $x\geq 12$. We will first compute the degree of $x=12n+4$ for some $n\in\Nat$. First of all if $12n+1$ is an odd prime, then $x$ is adjacent to $x-6$ and $x$ can be expressed as a sum of $3$ and an odd prime. This is the only vertex on the path $P$ in $G(3,x/2)$ which is numerically less than $x$ and may be a neighbor of $x$. For example, $16$ is adjacent to $10$ in the near Goldbach graph $G(8)$ as $\frac{16+10}{2}=13$ and $\frac{16-10}{2}=3$ which are prime numbers. But $28$ is not adjacent to $22$ in $G(11)$ as $\frac{28+22}{2}=25$ which is not prime. Since $x-1=12n+3$ is not prime for any $n\in\Nat$, in this case, $x$ can never be expressed as a sum of an odd prime and $1$. Thus all other neighbors are of the form $12m+6$ ($m\in\Nat\cup\set{0}$). 

\vspace{1em}
\noindent
Let $S(n)=\Set{12m+6}{m\in\Nat\cup\set{0}, m<n}$. We wish to count neighbors of $x$ in $S(n)$. First we count in the graph $G(3,5;x/2)$. We will exclude all members $y$ from $S(n)$ such that $5$ divides $x+y$ or $x-y$ (except the case where $x-y=2\times 5=10$). Let us arrange the list of elements in $S(n)$ in increasing order as $\set{6,18,30,42,54,66,78,90,102,114,126,\ldots}=\set{y_1,y_2,y_3,y_4,y_5,y_6,y_7,y_8,y_9,y_{10},y_{11}\ldots}$ (say). These numbers when put to modulo $5$, we get a sequence $\set{1,3,0,2,4\ ,1,3,0,2,4,\ 1,3,0,2,4,\ldots}$. \\
 Now suppose $\Mod{x}{i}{5}$, where $i\in\set{1,2,3,4,5}$. Let $j=5-i$ and it is in $k$-th position in the sequence $\set{1,3,0,2,4}$. Then $x+y_{k+5r}$ is a non-trivial multiple of $5$ for every $r\in\Nat\cup\set{0}$ such that $y_{k+5r}<x$, i.e., $k+5r\leq n$ which implies $r\leq\frac{n-k}{5}$, i.e., $r=0,1,2,\ldots,\lfloor \frac{n-k}{5}\rfloor$. Therefore there are $\lfloor \frac{n-k}{5}\rfloor +1$ number of non-neighbors $y$ of $x$ in the graph $G(3,5;x/2)$ within $S(n)$ for which $\Div{5}{x+y}$. Let us denote the set of these non-neighbors by $F(x,5,+)$.

\vspace{1em}
\noindent
For example, let $n=8$. Then $x=100$, $S(8)=\set{6,18,30,42,54,66,78,90}$. Now $\Mod{x}{5}{5}$. So $j=5-5=0$ and $k=3$. Thus we have to exclude members of $S(8)$ in the positions $\set{3,8}$. Note that $\lfloor \frac{n-k}{5}\rfloor +1=2$ and $F(100,5,+)=\set{30,90}$. Again let $n=10$. Then $x=124$, $S(10)=\set{6,18,30,42,54,66,78,90,102,114}$, $\Mod{x}{4}{5}$, $j=5-4=1$, $k=1$. So members of $S(10)$ which are in positions $\set{1,6}$ are non-neighbors of $x$. Here also $\lfloor \frac{n-k}{5}\rfloor +1=2$ and $F(124,5,+)=\set{6,66}$. 

\vspace{1em}
\noindent
Similarly, when $\Mod{x}{i}{5}$, where $i\in\set{0,1,2,3,4}$. Suppose $i$ is in the $k$-th position in the sequence $\set{1,3,0,2,4}$. Then $\Div{5}{x-y_{k+5r}}$ for every $r\in\Nat\cup\set{0}$ such that $y_{k+5r}<x$. Among them there is some $y=12t+6$ ($t\in\Nat\cup\set{0}$) such that $\frac{x-y}{2}=5$. Then $12n+4-(12t+6)=2\times 5$ which implies $t=\frac{6n-5-1}{6}$. So its position in $S(n)$ is $\frac{6n-5-1}{6}+1=\frac{6n-5+5}{6}=n$. Thus we consider elements $y_{k+5r}$ for $r=0,1,2,\ldots,\lfloor \frac{n-k}{5}\rfloor$, except $y_n$. We denote the set of these non-neighbors by $F(x,5,-)$. We have $F(100,5,-)=\set{30}$ and $F(124,5,-)=\set{54}$. Thus $100$ has only $2$ non-neighbors $\set{30,90}$ in $S(8)$ whereas $124$ has $3$ non-neighbors $\set{6,54,66}$ in $S(10)$. In general, there are exactly $|F(x,5,+)\cup F(x,5,-)|$ non-neighbors of $x$ within $S(n)$ in the graph $G(3,5;x/2)$. In other words, there are $n-|F(x,5,+)\cup F(x,5,-)|$ neighbors of $x$ within $S(n)$ in the graph $G(3,5;x/2)$.

\begin{definition}\label{deffp}
Let $S(n)=\Set{12m+6}{m\in\Nat\cup\set{0}, m<n}$. Let $p$ be a prime number such that $3<p<\sqrt{x}$. Let 
$$F(x,p,+)=\{y\in S(n)\, :\, \Div{p}{x+y}\} \text{ and } F(x,p,-)=\{y\in S(n)\, :\, \Div{p}{x-y},\ x-y\neq 2p\}.$$ 
\end{definition}

\begin{lemma}\label{lem12n4}
Let $p$ be a prime number such that $3<p<\sqrt{x}$. Let $\Mod{x}{i}{p}$, $i\in\Nat$, $i\leq p$. Let $j=p-i$ and it is in the $k_1$-th position in the sequence $\Set{[12t+6]_p}{t\in\Nat\cup\set{0},\ t\leq p-1}$, where $[u]_p$ denotes the number $c\in\Nat\cup\set{0}$, $c\leq p-1$ such that $\Mod{u}{c}{p}$. Then 
$$F(x,p,+)=\Set{y_{k_1+pr}}{r=0,1,2,\ldots,\lfloor \frac{n-k_1}{p}\rfloor}.$$
Similarly, if $\Mod{x}{i}{p}$, $i\in\Nat\cup\set{0}$, $i\leq p-1$ and $i$ is in the $k_2$-th position in the sequence $\Set{[12t+6]_p}{t\in\Nat\cup\set{0},\ t\leq p-1}$, then 
$$F(x,p,-)=\Set{y_{k_2+pr}}{r=0,1,2,\ldots,\lfloor \frac{n-k_2}{p}\rfloor} \setminus\Set{y_s}{s=\frac{6n-p+5}{6},\ s\in\Nat\cup\set{0}}.$$
Moreover, the set $\set{y_s}\neq\emptyset$ if and only if $\Div{3}{p+1}$.
\end{lemma}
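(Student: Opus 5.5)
The plan is to reduce everything to the periodicity of the residues $[12t+6]_p$. Write $y_m=12(m-1)+6$ for $m=1,\dots,n$, so that $S(n)=\set{y_1,\dots,y_n}$; note $y_n=12n-6<x=12n+4$.

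\textbf{Periodicity.} Since $p>3$ is prime, $\gcd(12,p)=1$, so $t\mapsto [12t+6]_p$ is a bijection from $\set{0,1,\dots,p-1}$ onto $\set{0,1,\dots,p-1}$. Consequently the sequence $([y_m]_p)_{m\geq 1}$ is periodic with exact period $p$, and every residue class modulo $p$ occurs exactly once in each block of $p$ consecutive terms. Hence for any residue $c$ there is a unique position $k\in\set{1,\dots,p}$ in the first block with $[y_k]_p=c$. Moreover $[y_m]_p=c$ if and only if $m\equiv k \pmod p$, that is, $m=k+pr$ with $r\geq 0$.

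\textbf{The set $F(x,p,+)$.} We have $\Div{p}{x+y_m}$ if and only if $y_m\equiv -x\equiv p-i \pmod p$, i.e.\ $[y_m]_p=j$ (taking $j=0$ when $i=p$). So the members of $F(x,p,+)$ are exactly the $y_{k_1+pr}$ with $1\le k_1+pr\le n$. The condition $k_1+pr\leq n$ is equivalent to $r\leq \lfloor (n-k_1)/p\rfloor$. If $k_1>n$ the set is empty, which matches the formula read as an empty range.

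\textbf{The set $F(x,p,-)$.} Here $\Div{p}{x-y_m}$ if and only if $[y_m]_p=i$, giving the positions $m=k_2+pr$ exactly as before. It remains to remove the unique $y$ (if it exists) with $x-y=2p$. This $y=x-2p=12n+4-2p$ lies in $S(n)$ iff it is of the form $12(s-1)+6$ with $1\le s\le n$. Solving gives $s=(12n-2p+10)/12=(6n-p+5)/6$. Since $p<\sqrt{x}$ we have $2p<x$, so $y>0$ and hence $s\ge 1$; also $s\le n$ automatically. Such a $y$, when it exists, satisfies $x-y=2p\equiv 0$, so it is indeed one of the $y_{k_2+pr}$, and the set difference removes exactly it. This matches the definition of $F(x,p,-)$.

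\textbf{The ``moreover'' clause.} The set $\set{y_s}$ is nonempty iff $s$ is an integer, i.e.\ $\Div{6}{6n-p+5}$, i.e.\ $p\equiv 5\pmod 6$. Since $p$ is odd, $p\equiv 5 \pmod 6$ is equivalent to $p\equiv 2\pmod 3$, i.e.\ $\Div{3}{p+1}$.

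The only step needing care is the bookkeeping in the second part. First, one must confirm that the position $k_2$ and the range $r\leq\lfloor (n-k_2)/p\rfloor$ capture exactly the $y<x$ in $S(n)$. Second, one must check that the excluded element satisfies $s\geq 1$, which uses $p<\sqrt{x}$. A related point is the edge case $i=p$ versus $i=0$ in the indexing of $j$, which must be handled consistently.
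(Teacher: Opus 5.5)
Your proposal is correct and follows the same route as the paper. Both arguments use the periodicity of the residues $[12t+6]_p$ in blocks of $p$, locate the positions $k_1$ and $k_2$ of $j$ and $i$ in the first block, and remove the element $y=x-2p$ at position $s=(6n-p+5)/6$, which is an integer exactly when $\Div{6}{p+1}$, i.e.\ $\Div{3}{p+1}$. You are in fact slightly more careful than the paper, which leaves implicit two points: that $t\mapsto[12t+6]_p$ is a bijection (from $\gcd(12,p)=1$), and that $1\le s\le n$.
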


\begin{proof} 
Let $x=12n+4$ for some $n\in\Nat$ and $p$ be a prime number such that $3<p<\sqrt{x}$. Let $y\in F(x,p,+)$. As we noticed $y\in S(n)=\Set{12m+6}{m\in\Nat\cup\set{0}, m<n}$. Let us arrange members of $S(n)$ in increasing order and apply modulo $p$ for each of them. Then we get a repeating sequence of $p$ numbers. For example, for $p=5$, we got the sequence $\set{1,3,0,2,4\ ,1,3,0,2,4,\ 1,3,0,2,4,\ldots}$. Since we are trying to locate elements $y\in S(n)$ for which $p$ divides $x+y$, we choose those $y$ such that $\Mod{y}{j}{p}$, where $j=p-i$ and $\Mod{x}{i}{p}$. We will get a unique such $y$ in the first block of $p$ numbers in the above repeating sequence which is located at say, $k_1$-th position in the block $\Set{[12t+6]_p}{t\in\Nat\cup\set{0},\ t\leq p-1}$. Also in each subsequent block of $p$ numbers we find a unique such $y$ at the same position of the block. Thus these numbers are $\set{y_{k_1},y_{k_1+p},y_{k_1+2p},\ldots,y_{k_1+pt}}$, where $k_1+pt\leq n$, i.e., $t\leq \lfloor \frac{n-k_1}{p}\rfloor$. Hence we have
$$F(x,p,+)=\Set{y_{k_1+pr}}{r=0,1,2,\ldots,\lfloor \frac{n-k_1}{p}\rfloor}.$$

\noindent
Similarly, $F(x,p,-)$ consists of those $y$ in $S(n)$ such that $p$ divides $x-y$, except possibly the case where $p=\frac{x-y}{2}$. Now $p$ divides $x-y$ implies that $\Mod{y}{i}{p}$, where $\Mod{x}{i}{p}$ and let the number $i$ be in the $k_2$-th position within the first block of $p$ numbers in the repeating sequence. So in this case these numbers are precisely $\set{y_{k_2},y_{k_2+p},y_{k_2+2p},\ldots,y_{k_2+pt}}$, where $t\leq \lfloor \frac{n-k_2}{p}\rfloor$ except (possibly) the case $y=12r+6$ for which $p=\frac{x-y}{2}=\frac{(12n+4)-(12r+6)}{2}=6n-6r-1$ which implies $r=\frac{6n-p-1}{6}$. Since we have started with counting $y_1$ for $r=0$, this element lies in the $s=(r+1)$-th position, i.e., $s=\frac{6n-p-1}{6}+1=\frac{6n+5-p}{6}$. So we have to exclude $y_s$ whenever $s$ is an integer. Thus we have
$$F(x,p,-)=\Set{y_{k_2+pr}}{r=0,1,2,\ldots,\lfloor \frac{n-k_2}{p}\rfloor} \setminus\Set{y_s}{s=\frac{6n-p+5}{6},\ s\in\Nat\cup\set{0}}.$$
Note that $s$ is an integer if and only if $6$ divides $6n+5-p=6(n+1)-(p+1)$, i.e., if and only if $6$ divides $p+1$. Since $p+1$ is always even, we have $s$ is an integer if and only if $3$ divides $p+1$. 
\end{proof}

\noindent
Now in the graph $G(x/2)$, the number of non-neighbors of $x=12n+4$ in $S(n)$ is given by
\begin{equation}\label{eqfn}
f(n)=|\bigcup\limits_{\substack{p\, =\, \text{prime}\\ 3<p<\sqrt{x}}} \ \left(F(x,p,+)\cup F(x,p,-)\right)|.
\end{equation}
Therefore we have the following theorem.

\begin{theorem}\label{thm14}
Let $n\in\Nat$. Then the degree of $x=12n+4$ in $G(x/2)$ is $n-f(n)+1$ or $n-f(n)$ according to $x-3$ is prime or not, where $f(n)$ is defined by (\ref{eqfn}). 
\end{theorem}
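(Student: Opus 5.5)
The plan is to partition the possible neighbours of $x=12n+4$ in $G(x/2)$ by their residue modulo $12$ and count each class separately. First I will use the containment of edge sets, $G(x/2)\subseteq G(3,x/2)$ (from Proposition~\ref{propngb}, or directly from the definition when $x/2\le 12$). So every neighbour $y<x$ of $x$ in $G(x/2)$ is already a neighbour of $x$ in $G(3,x/2)$. By Remark~\ref{pmm3rem}, $x\equiv 4 \pmod{12}$ is a path vertex lying in $X$, and its neighbours in $G(3,x/2)$ are the members of $S_1$ below $x$, namely $S(n)$, together with its path neighbours. The only path vertex smaller than $x$ that can be adjacent to it is $x-6$. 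Indeed, a path vertex $y\not\equiv 0 \pmod 6$ with $y\equiv 2 \pmod 4$ and $y<x$ must satisfy that neither $(x\pm y)/2$ is a non-trivial multiple of $3$. Since $x\equiv 1 \pmod 3$, one of $(x+y)/2$, $(x-y)/2$ is divisible by $3$, so that one must equal $3$. Because $(x+y)/2>3$ for $x\ge 16$, this forces $(x-y)/2=3$, i.e.\ $y=x-6$. The pair $y=x-2$, which would correspond to the odd part $1$, is excluded because $x-1=12n+3$ is never prime.

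Next I will settle the vertex $x-6$. It is adjacent to $x$ in $G(x/2)$ exactly when $(x+(x-6))/2=x-3$ and $(x-(x-6))/2=3$ both lie in $O$. Since $3$ is prime, this happens iff $x-3$ is prime. This produces the $+1$ term.

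Then I will count neighbours in $S(n)$. A vertex $y\in S(n)$ is adjacent to $x$ in $G(x/2)$ iff $(x+y)/2$ and $(x-y)/2$ are odd primes or $1$. Both are odd, and neither is divisible by $3$ because $y\equiv 0\pmod 3$ and $x\equiv 1\pmod 3$. Moreover $(x+y)/2<x$. So by Proposition~\ref{propngb}, or directly by trial division, an odd number $m<x$ coprime to $3$ is prime or $1$ iff it has no prime factor $p$ with $3<p<\sqrt{x}$, except when $m=p$ itself. Since $(x+y)/2\ge (x+6)/2>\sqrt{x}$, it cannot equal such a $p$. Hence $y$ fails to be a neighbour iff some prime $3<p<\sqrt x$ satisfies $p\mid x+y$, or $p\mid x-y$ with $x-y\ne 2p$. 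That condition says exactly that $y\in F(x,p,+)\cup F(x,p,-)$ as in Definition~\ref{deffp}, which is described explicitly in Lemma~\ref{lem12n4}. So the number of non-neighbours in $S(n)$ is $f(n)$, and $|S(n)|=n$ gives $n-f(n)$ neighbours there.

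Adding the two contributions, which are disjoint since $x-6\notin S(n)$ ($x-6\equiv 10 \pmod{12}$), gives the degree $n-f(n)+1$ or $n-f(n)$. I expect the main obstacle to be the bookkeeping at the boundary: confirming that $\sqrt{x}$ is the right cutoff when $x$ is the square of a prime, and treating small $n$ where $x/2\le 12$. For those I would just check the range directly, noting that for $x=16$ no prime $p$ with $3<p<4$ exists.
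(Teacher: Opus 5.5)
Your proof is correct and follows the paper's route: you restrict to neighbours of $x$ in $G(3,x/2)$, isolate the single path vertex $x-6$ (adjacent if and only if $x-3$ is prime), and identify the non-neighbours in $S(n)$ with the union of the sets $F(x,p,\pm)$, using the sieve characterization that the paper takes from Proposition~\ref{propngb}. You also supply details the paper only asserts, namely the mod-$3$ argument that $x-6$ is the only admissible path vertex and the check $(x+y)/2>\sqrt{x}$ that explains why $F(x,p,+)$ needs no exception; the boundary worry you raise is vacuous, since $x$ is even and any composite $m<x$ has a prime factor $p\le\sqrt{m}<\sqrt{x}$.
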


\begin{proof}
The proof follows from Lemma \ref{lem12n4} and Proposition \ref{propngb}.
\end{proof}

\noindent
If $x-3$ is prime, then $x$ is a sum of two odd primes. If $x-3$ is not a prime number, $x$ can be expressed as a sum of two odd primes if and only if $n-f(n)>0$, i.e., $f(n)<n \Longleftrightarrow \frac{f(n)}{n}<1$. Since $12n+3$ cannot be prime, $x$ cannot be expressed as the sum of an odd prime and $1$. Now let us count $|F(x,p,+)|$, $|F(x,p,-)|$ and $|F(x,p,+)\cup F(x,p,-)|$ precisely.

\begin{lemma}\label{lemfp4}
$|F(x,p,+)|=\lfloor (n-k_1)/p\rfloor+1$, $|F(x,p,-)|=\lfloor (n-k_2)/p\rfloor $ if\ $\Div{3}{p+1}$ and $|F(x,p,-)|=\lfloor (n-k_2)/p\rfloor +1$ if\ $\nDiv{3}{p+1}$.  
$$|F(x,p,+)\cup F(x,p,-)|=\left\{\begin{array}{ll}
\lfloor (n-k_1)/p\rfloor+1, & \text{if } \Div{p}{x}\\
\lfloor (n-k_1)/p\rfloor + \lfloor (n-k_2)/p\rfloor +1, & \text{if } \nDiv{p}{x}\text{ and } \Div{3}{p+1}\\
\lfloor (n-k_1)/p\rfloor + \lfloor (n-k_2)/p\rfloor +2, & \text{if } \nDiv{p}{x}\text{ and } \nDiv{3}{p+1},\\
\end{array}\right.$$
where $k_1=[(p-[x]_p^1-6)\times 12^{p-2}]_p+1$, $k_2=[(x-6)\times 12^{p-2}]_p+1$, $[a]_p=i\in\set{0,1,2,\ldots,p-1}$ and $[a]_p^1=i\in\set{1,2,\ldots,p}$ such that $\Mod{a}{i}{p}$.
\end{lemma}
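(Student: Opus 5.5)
The plan is to read off everything from Lemma \ref{lem12n4}, so the work splits into three parts: the individual cardinalities, the intersection $F(x,p,+)\cap F(x,p,-)$, and the explicit formulas for $k_1,k_2$.

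\textbf{Individual cardinalities.} Lemma \ref{lem12n4} lists $F(x,p,+)$ as the elements $y_{k_1+pr}$ for $0\le r\le\lfloor (n-k_1)/p\rfloor$. These are distinct, since the indices are distinct. So $|F(x,p,+)|=\lfloor (n-k_1)/p\rfloor+1$. This needs $k_1\le n$, which follows from $p<\sqrt{x}$: then $p^2<12n+4$, so $p\le n$ for $n\ge 2$, hence $k_1\le p\le n$. The same count gives the progression in $F(x,p,-)$ size $\lfloor (n-k_2)/p\rfloor+1$.

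We subtract one exactly when the excluded element $y_s$ exists and lies in that progression. If $3\mid p+1$, then $s=(6n+5-p)/6$ is a positive integer with $s\le n$. The element $y_s$ satisfies $x-y_s=2p$, so $p\mid x-y_s$. Since $y_s\in S(n)$, it has residue $i$ and is therefore some $y_{k_2+pr}$. This gives $\lfloor (n-k_2)/p\rfloor$. If $3\nmid p+1$, nothing is removed, by the last clause of Lemma \ref{lem12n4}.

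\textbf{The union.} I would count $|F(x,p,+)\cup F(x,p,-)|$ by inclusion--exclusion. An element $y$ lies in both sets iff $p\mid x+y$ and $p\mid x-y$, which forces $p\mid 2x$ and $p\mid 2y$, i.e. $p\mid x$ and $p\mid y$.

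If $p\nmid x$, the sets are disjoint, and the second and third lines are just the sums of the two cardinalities.

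If $p\mid x$, then $i=p$ in the first part and $i=0$ in the second. So $j\equiv 0$, and the two residues coincide, giving $k_1=k_2$. The underlying progressions are then identical, and the union is the full progression $\{y_{k_1+pr}\}$. The element $y_s$ is removed from $F(x,p,-)$ but is still in $F(x,p,+)$. This gives $\lfloor (n-k_1)/p\rfloor+1$.

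\textbf{Formulas for $k_1,k_2$.} The position of a residue $c$ in the block $[12t+6]_p$, $t=0,\dots,p-1$, is $t+1$, where $12t+6\equiv c\pmod p$. Since $p>3$, $12$ is invertible mod $p$, and Fermat gives $12^{-1}\equiv 12^{p-2}$. Hence $t=[(c-6)12^{p-2}]_p$.

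For $k_2$ take $c=i\equiv x$, which gives $k_2=[(x-6)12^{p-2}]_p+1$. For $k_1$ take $c=j=p-[x]_p^1$, which gives the stated $k_1$. The convention $[x]^1_p\in\{1,\dots,p\}$ matches the lemma's $i\le p$, so that $j\in\{0,\dots,p-1\}$.

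\textbf{Main obstacle.} The most delicate step is verifying that $y_s$ genuinely lies among $y_1,\dots,y_n$ and in the $k_2$-progression. This needs $1\le s\le n$, i.e. $p\le 6n-1$, which holds since $p<\sqrt{12n+4}$. I would also double-check the degenerate case $p\mid x$ with $3\mid p+1$. There $y_s$ satisfies $x-y_s=2p$ and $p\mid y_s$, so it is in $F(x,p,+)$, and the union count is not affected.
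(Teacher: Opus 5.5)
Your proposal is correct and takes essentially the same route as the paper. You read the counts off Lemma~\ref{lem12n4}, locate $k_1,k_2$ via $12^{-1}\equiv 12^{p-2}\pmod p$, and use that the two progressions coincide when $p\mid x$ (so $F(x,p,-)\subseteq F(x,p,+)$) and are disjoint otherwise, while supplying justifications the paper leaves implicit (disjointness via $p\mid 2x$, and $y_s=x-2p$ lying in $S(n)$ and in the $k_2$-progression).

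One small slip: $p^2<12n+4$ does not give $p\le n$ for small $n$; for example, $n=2$, $p=5$ gives $k_1=4>n$. You never need $k_1\le n$, though, since $1\le k_1\le p$ gives $n-k_1>-p$, so $\lfloor (n-k_1)/p\rfloor+1$ correctly returns $0$ when the progression is empty.
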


\begin{proof}
We have $F(x,p,+)=\Set{y_{k_1+pr}}{r=0,1,2,\ldots,\lfloor \frac{n-k_1}{p}\rfloor}$, where $\Mod{x}{i}{p}$, $i\in\Nat$, \\
$i\leq p$, $j=p-i$ and it is in the $k_1$-th position in the sequence $\Set{[12t+6]_p}{t\in\Nat\cup\set{0},\ t\leq p-1}$. Suppose $[12t+6]_p=[p-[x]_p^1]_p$. Then $t=[p-[x]_p^1-6]_p\, ([12]_p)^{-1}$. By Fermat's Little Theorem, we have $\Mod{12^{p-1}}{1}{p}$ as $p>3$ is a prime. Thus $([12]_p)^{-1}=[12^{p-2}]_p$. This implies $t=[(p-[x]_p^1-6)\times 12^{p-2}]_p$. Finally, since $t$ starts from $0$, we have $k_1=[(p-[x]_p^1-6)\times 12^{p-2}]_p+1$. Then $|F(x,p,+)|=\lfloor (n-k_1)/p\rfloor+1$. 

\vspace{1em}
\noindent
Again $F(x,p,-)=\Set{y_{k_2+pr}}{r=0,1,2,\ldots,\lfloor \frac{n-k_2}{p}\rfloor} \setminus\Set{y_s}{s=\frac{6n-p+5}{6},\ s\in\Nat\cup\set{0}}$,\\
 where $\Mod{x}{i}{p}$, $i\in\Nat\cup\set{0}$, $i\leq p-1$ and $i$ is in the $k_2$-th position in the sequence\\
 $\Set{[12t+6]_p}{t\in\Nat\cup\set{0},\ t\leq p-1}$. Now $[12t+6]_p=[x]_p$ implies that $t=[(x-6)\times 12^{p-2}]_p$ and hence $k_2=[(x-6)\times 12^{p-2}]_p+1$. Thus $|F(x,p,-)|$ is $\lfloor (n-k_2)/p\rfloor$ or $\lfloor (n-k_2)/p\rfloor +1$ according to $s=\frac{6n-p+5}{6}$ is an integer or not, i.e., $\Div{6}{p+1}$ or not which is equivalent to say $\Div{3}{p+1}$ or not as $p+1$ is even.

\vspace{1em}
\noindent
Next we notice that, by definition of $F(x,p,+)$ and $F(x,p,-)$ the cases are exactly same (with possible exception of one case where $\frac{x-y}{2}=p$) when $p$ divides $x$, i.e., when $\Mod{x}{0}{p}$. In this case the position of $j$ for $F(x,p,+)$ and the position of $i$ in $F(x,p,-)$ coincide. Otherwise all the cases are different. Thus, we have $|F(x,p,+)\cup F(x,p,-)|=|F(x,p,+)|$ or $|F(x,p,+)|+|F(x,p,-)|$ according to $\Div{p}{x}$ or not. This proves the formula in the statement. 
\end{proof}

\begin{example}
Let $n=100$, $x=1204$, $p=5$. Then $\Mod{x}{4}{5}$ and $j=5-4=1$. So $k=1$ as $1$ is in the first position of the modulo sequence $\set{1,3,0,2,4}$ for $5$. So there are $20$ occurrences of $y=12m-6$ for $m=1,6,11,16,21,26,31,36,41,46,51,56,61,66,71,76,81,86,91,96$ for which $\Div{5}{x+y}$. Again $4$ is in the $5$-th position of $\set{1,3,0,2,4}$. Hence there are $19$ occurrences of $y=12m-6$ for $m=5,10,15,20,25,30,35,40,45,50,55,60,65,70,75,80,85,90,95$ for which $\Div{5}{x-y}$. All these cases are different. Thus here $|F(x,p,+)\cup F(x,p,-)|=|F(x,p,+)|+|F(x,p,-)|$. 

\vspace{1em}
\noindent
Let $p=7$. Here $\Div{7}{x}$, i.e., $\Mod{x}{7}{7}$ and $j=7-7=0$. Now the modulo sequence of $7$ is $\set{6,4,2,0,5,3,1}$ where $0$ is in the $4$-th position. Thus there are $14$ occurrences of $y=12m-6$ for $m=4,11,18,25,32,39,46,53,60,67,74,81,88,95$ for which $\Div{7}{x+y}$. Again since $0$ is in the $4$-th position in $\set{6,4,2,0,5,3,1}$, there are same $14$ occurrences of $y=12m-6$ for which $\Div{7}{x-y}$. All these cases are same. Thus here $|F(x,p,+)\cup F(x,p,-)|=|F(x,p,+)|$.
\end{example}

\section{Some Approximations}

\noindent
In Theorem \ref{thm14}, we got an exact formula for computing degree of an even integer $x=12n+4$ in $G(x/2)$ for all $n\in\Nat$. This formula is difficult to compute for large $n$ as it involves union of sets. In the following, we approach a probabilistic argument to estimate the degrees for large $n$. 

\begin{definition}\label{defap}
Let $n\in\Nat$, $x=12n+4$, $S(n)=\Set{12m+6}{m\in\Nat\cup\set{0},\ m<n}$.\\
 Let $PR(x)$ be the set of primes $p$ such that $3<p<\sqrt{x}$ and $r_x=|PR(x)|$. For each prime $p\in PR(x)$, let $A_{p}$ be the event which is the set of numbers $y$ in $S(n)$ such that either $\Div{p}{x+y}$ or $\Div{p}{x-y}$ (except the case where $p=\frac{x-y}{2}$). Let $\mathscr{A}(x)=\Set{A_p}{p\in PR(x)}$ and $\mathscr{A}_x=\bigcup \Set{A_p}{p\in PR(x)}$. We call $\mathscr{A}(x)$ as the {\em set of divisibility events} for $x$. Note that for $n>3$, $|\mathscr{A}(x)|=|PR(x)|>1$. Moreover, we define $T(n)=\frac{f(n)}{n}$, where $f(n)$ is given by (\ref{eqfn}).
\end{definition}

\noindent
It is clear that $T(n)$ is the probability of an element $12m+6$ in $S(n)$ such that $\frac{x+y}{2}$ or $\frac{x-y}{2}$ is a non-trivial multiple of a prime $3<p<\sqrt{x}$, i.e., the probability of the event $\mathscr{A}_x$. Now in view of Lemma \ref{lemfp4}, we define the following:
	
\begin{definition}\label{defeta13}
Let $n\in\Nat$ and $x=12n+4$. For any prime $p>3$, we define
$$pr_1(p)=\left\{\begin{array}{ll}
\frac{\lfloor (n-k_1)/p\rfloor+1}{n}, & \text{if } \Div{p}{x}\\
\frac{\lfloor (n-k_1)/p\rfloor + \lfloor (n-k_2)/p\rfloor +1}{n}, & \text{if } \nDiv{p}{x}\text{ and } \Div{3}{p+1}\\
\frac{\lfloor (n-k_1)/p\rfloor + \lfloor (n-k_2)/p\rfloor +2}{n}, & \text{if } \nDiv{p}{x}\text{ and } \nDiv{3}{p+1},\\
\end{array}\right.$$
where $k_1$ and $k_2$ are defined as in Lemma \ref{lemfp4} and

$$pr_2 (p)=\left\{\begin{array}{ll}
\frac{1}{p}+\frac{1}{n}, & \text{if } \Div{p}{x}\\
\frac{2}{p}+\frac{1}{n}, & \text{if } \nDiv{p}{x}\text{ and } \Div{3}{p+1}\\
\frac{2}{p}+\frac{2}{n}, & \text{if } \nDiv{p}{x}\text{ and } \nDiv{3}{p+1}.\\
\end{array}\right.$$
Now we define the following approximations of $T(n)$:

\begin{equation}\label{eqneta5n}
\tau_1(n)=1-\prod\limits_{i=1}^{r_x} (1-pr_1(p_i)).
\end{equation}

\begin{equation}\label{eqneta6n}
\tau_2(n)=1-\prod\limits_{i=1}^{r_x} (1-pr_2 (p_i)).
\end{equation}
\end{definition}

\noindent
In Table \ref{tabtnetan}, we compare the values of $T(n)$ and two approximate functions defined above by assuming events $A_p$ ($p>3$) are mutually independent. We observe that eventually the values of $\tau_1(n)$ and $\tau_2(n)$ are same as $n$ becomes large and the actual values are nearly $0.2\%$ more than these approximations for $n=10^{10}$, i.e., for $x=12\times 10^{10}+4$. The actual values of $T(n)$ are evaluated by computing (\ref{eqfn}). 

\begin{table}[t]
$$\begin{array}{|l|l|l|l|l|l|}
\hline
n & T(n) & \tau_1(n) & \tau_2(n) & T(n)/\tau_2(n) & \text{deg}(12n+4)\\
\hline 
10 &  0.5 & 0.559 & 0.815325 &  0.61325 & 5\\
\hline
10^2 & 0.73 & 0.761912 & 0.806097 &  0.9056 & 28\\
\hline
10^3 & 0.888 & 0.891806 & 0.898164 &  0.98868 & 112\\
\hline
10^4 & 0.9251 & 0.922967 & 0.924014 & 1.00118 & 749 \\
\hline
10^5 & 0.94666 & 0.943553 & 0.943756 &  1.00308 & 5334\\
\hline
10^6 & 0.965814 & 0.962464 & 0.962501 &  1.00344 & 34186\\
\hline
10^7 & 0.974293 & 0.971193 & 0.971201 &  1.00318 & 257074\\
\hline 
10^8 & 0.975922 & 0.972605 & 0.972607 &  1.00341 & 2407769\\
\hline
10^9 & 0.983884 & 0.981462 & 0.981462 &  1.00247 & 16115802\\
\hline
10^{10} & 0.986781 & 0.984656 & 0.984656 &  1.00216 & 132188594\\
\hline
\end{array}$$
\caption{Exact degrees of $x$ in $G(x/2)$ and comparison between values of $T(n)$ and $\tau_1(n),\tau_2(n)$ for $x=12n+4$.}\label{tabtnetan}
\end{table}

\vspace{1em}
\noindent
Now by Theorem \ref{thm14} the degree of $x$ in $G(x/2)$ is $n(1-T(n))+1$ or $n(1-T(n))$ according as $x-3$ is prime or not. The final computation of degree of $x$ depends on the fact that whether the cases for different primes are independent or not. Though there is no such apparent dependency between these cases, in reality, they are not independent. In the following we wish to study the dependence of events $A_p$ ($p>3$) for two different primes. We begin with estimating the probability, $P(A_p\cap A_q)$ for some different primes $p,q\geq 5$.

\begin{lemma}\label{lemindfp}
Let $p,q$ be two prime numbers such that $p\neq q$ and $p,q\geq 5$. Let $k_p=[(p-[x]_p^1-6)\times 12^{p-2}]_p+1$, $k_q=[(q-[x]_q^1-6)\times 12^{q-2}]_q+1$, where $[a]_p=i\in\set{0,1,2,\ldots,p-1}$ and $[a]_p^1=i\in\set{1,2,\ldots,p}$ such that $\Mod{a}{i}{p}$. Then 
$$|F(x,p,+)\cap F(x,q,+)|= \min\left\{\frac{\lfloor \frac{n-k_p}{p}\rfloor -ca_0}{q}, \frac{\lfloor \frac{n-k_q}{q}\rfloor -cb_0}{p}\right\} + \max\, \left\{-\frac{ca_0}{q},-\frac{cb_0}{p}\right\} +1,$$
where $a_0,b_0$ be a solution of the equation $pa-qb=1$ and $c=k_q-k_p$.
\end{lemma}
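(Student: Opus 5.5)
By Lemma \ref{lem12n4} (applied once with the prime $p$ and once with $q$), and with $k_p,k_q$ the positions computed in Lemma \ref{lemfp4}, we have
$$F(x,p,+)=\Set{y_{k_p+pa}}{0\leq a\leq A},\qquad F(x,q,+)=\Set{y_{k_q+qb}}{0\leq b\leq B},$$
where $A=\lfloor \frac{n-k_p}{p}\rfloor$ and $B=\lfloor \frac{n-k_q}{q}\rfloor$. The elements $y_1<y_2<\cdots$ of $S(n)$ are distinct. Hence $y_{k_p+pa}=y_{k_q+qb}$ holds exactly when the indices agree, i.e. when $k_p+pa=k_q+qb$. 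So $|F(x,p,+)\cap F(x,q,+)|$ is the number of integer pairs $(a,b)$ with
$$pa-qb=c,\qquad c=k_q-k_p,\qquad 0\leq a\leq A,\qquad 0\leq b\leq B.$$
The plan is therefore to reduce the count to solving a linear Diophantine equation and counting its solutions in a box.

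Since $p\neq q$ are primes, $\gcd(p,q)=1$, so $pa-qb=1$ has an integer solution $(a_0,b_0)$. Then $(ca_0,cb_0)$ is a particular solution of $pa-qb=c$. The general solution is
$$a=ca_0+qt,\qquad b=cb_0+pt,\qquad t\in\Int.$$
Completeness of this parametrization follows from $p(a-ca_0)=q(b-cb_0)$ together with $\gcd(p,q)=1$. The correspondence $t\mapsto(a,b)$ is injective, so counting pairs amounts to counting admissible integers $t$.

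Substituting into the box constraints, $0\leq a$ and $0\leq b$ become $t\geq -\frac{ca_0}{q}$ and $t\geq -\frac{cb_0}{p}$. Likewise $a\leq A$ and $b\leq B$ become $t\leq \frac{A-ca_0}{q}$ and $t\leq \frac{B-cb_0}{p}$. Thus $t$ ranges over the integers in the interval $[L,U]$, where
$$L=\max\left\{-\frac{ca_0}{q},-\frac{cb_0}{p}\right\},\qquad U=\min\left\{\frac{A-ca_0}{q},\frac{B-cb_0}{p}\right\}.$$
The number of such integers is $\lfloor U\rfloor-\lceil L\rceil+1$, or $0$ if this quantity is negative. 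This matches the stated expression, with $U$ in the first slot and $-L=\min\{ca_0/q,\,cb_0/p\}$ in the second, which explains the sign convention of the max term.

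The main obstacle is reconciling this exact count with the statement, which is written without floor and ceiling brackets. I would present the result with the brackets and remark that the displayed formula is to be read with $\lfloor\cdot\rfloor$ on the min term and $\lceil\cdot\rceil$ applied to $L$ (that is, the max term enters as $-\lceil L\rceil$). I would also check that the value does not depend on the choice of $(a_0,b_0)$. Replacing $(a_0,b_0)$ by $(a_0+q,\,b_0+p)$ shifts every bound by the integer $-c$, so both $\lfloor U\rfloor$ and $\lceil L\rceil$ shift by $-c$ and the count is unchanged. Finally, I would add the caveat that the formula applies only when it is non-negative, and that it equals $0$ when no admissible $t$ exists.
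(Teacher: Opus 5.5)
Your argument is the paper's argument. You index $F(x,p,+)$ and $F(x,q,+)$ via Lemma \ref{lem12n4}, reduce coincidence of elements to $pa-qb=c$, parametrize the solutions by $a=ca_0+qt$, $b=cb_0+pt$, and intersect the two ranges for $t$. Your added care is correct and goes beyond what the paper writes: the floor and ceiling, clipping at $0$, and the check that the count does not depend on the choice of $(a_0,b_0)$.

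The one thing to fix is your reconciliation sentence. The second slot of the displayed formula is $\max\{-ca_0/q,-cb_0/p\}$, which is $L$ itself, not $-L$. So the statement literally reads $U+L+1$, whereas the correct count is $\lfloor U\rfloor-\lceil L\rceil+1$.

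These genuinely differ. In the paper's own example ($p=5$, $q=7$, $n=1000$) one has $L=-2.4$ and $U=25.8$. The true count is $25-(-2)+1=28$, which is what the paper actually computes there, while $U+L+1=24.4$. So the printed formula, and the final ``Hence'' line of the paper's proof, contain a sign slip. The max term must enter with a minus sign, equivalently as $+\min\{ca_0/q,\,cb_0/p\}$, together with the floor and ceiling.

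Say this explicitly. State $\max\{0,\lfloor U\rfloor-\lceil L\rceil+1\}$ as the corrected formula you prove, rather than presenting it as a reading of the displayed expression.
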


\begin{proof}
We have, by Lemma \ref{lem12n4}, \\
$F(x,p,+)=\Set{y_{k_p+pr}}{r=0,1,2,\ldots,\lfloor \frac{n-k_p}{p}\rfloor}$, where $k_p=[(p-[x]_p^1-6)\times 12^{p-2}]_p+1$. A similar formula holds for $q$. Now to find the intersection $F(x,p+)\cap F(x,q,+)$ we solve for $a,b$ from the following diophantine equation: 
$$k_p+pa=k_q+qb,\text{ i.e., }pa-qb=k_q-k_p=c \text{ (say)}.$$
Let $a_0,b_0$ be a solution of the equation $pa-qb=1$, then the general solution is $a=ca_0+qt$ and $b=cb_0+pt$, where $t$ is any integer. Now in the expression of $F(x,p,+)$, we have $0\leq a\leq \lfloor \frac{n-k_p}{p}\rfloor$ which implies $-\frac{ca_0}{q}\leq t\leq \frac{\lfloor \frac{n-k_p}{p}\rfloor -ca_0}{q}$. Similarly, we have $0\leq b\leq \lfloor \frac{n-k_q}{q}\rfloor\Longrightarrow -\frac{cb_0}{p}\leq t\leq \frac{\lfloor \frac{n-k_q}{q}\rfloor -cb_0}{p}$. Thus we have 
$$\max\, \left\{-\frac{ca_0}{q},-\frac{cb_0}{p}\right\}\leq t\leq \min\left\{\frac{\lfloor \frac{n-k_p}{p}\rfloor -ca_0}{q}, \frac{\lfloor \frac{n-k_q}{q}\rfloor -cb_0}{p}\right\}.$$
Hence 
$$|F(x,p,+)\cap F(x,q,+)|= \min\left\{\frac{\lfloor \frac{n-k_p}{p}\rfloor -ca_0}{q}, \frac{\lfloor \frac{n-k_q}{q}\rfloor -cb_0}{p}\right\} + \max\, \left\{-\frac{ca_0}{q},-\frac{cb_0}{p}\right\} +1.$$
\end{proof}

\noindent
Now in order to calculate degree of $x$ in $G(x/2)$ we are concerned with primes less than $\sqrt{x}$. We note that $n=\frac{x-4}{12}>\sqrt{x}$ for $n\geq 152$. Now if we consider $n$ to be large enough in comparison with primes, $p,q$, then we may approximate $|F(x,p,+)\cap F(x,q,+)|\approx \frac{n}{pq}$ (where we may ignore the other small constants).

\vspace{1em}
\noindent
Moreover we have seen in the proof of Lemma \ref{lemfp4} that either $F(x,p,+)\cap F(x,p,-)=\emptyset$ or $F(x,p,+)\cap F(x,p,-)=F(x,p,-)$ according to $\Div{p}{x}$ or not. Now what we calculate for $F(x,p,+)$ and $F(x,q,+)$, similar expressions can be obtained for $F(x,p,+)$ and $F(x,q,-)$ or $F(x,p,-)$ and $F(x,q,+)$ or $F(x,p,-)$ and $F(x,q,-)$. Thus in general we may estimate the following:

\begin{corollary}
Let $p,q$ be two distinct primes greater than $3$. The events $A_p$ and $A_q$ are defined as in Definition \ref{defap}. Then
$$P(A_p\cap A_q)=(|(F(x,p,+)\cup F(x,p,-))\cap (F(x,q,+)\cup F(x,q,-))|)/n \approx$$
$$\left\{%
\begin{array}{ll}
\frac{1}{pq} & \text{if } \Div{p}{x}\text{ and }\Div{q}{x}\\
\frac{2}{pq} & \text{if } \Div{p}{x}\text{ and }\nDiv{q}{x}\\
\frac{2}{pq} & \text{if } \nDiv{p}{x}\text{ and }\Div{q}{x}\\
\frac{4}{pq} & \text{if } \nDiv{p}{x}\text{ and }\nDiv{q}{x}.
\end{array}\right.$$
\end{corollary}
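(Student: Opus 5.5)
The plan is to expand the intersection by distributivity and estimate each of the (at most four) pieces with the counting argument of Lemma \ref{lemindfp}. By definition $A_p=F(x,p,+)\cup F(x,p,-)$, so
$$A_p\cap A_q=\bigcup_{\epsilon,\delta\in\{+,-\}}\left(F(x,p,\epsilon)\cap F(x,q,\delta)\right).$$
In the proof of Lemma \ref{lemfp4} we saw that $F(x,p,+)$ and $F(x,p,-)$ coincide up to at most one element when $\Div{p}{x}$ and are disjoint otherwise; the same holds for $q$. Hence, up to $O(1)$ elements, $A_p$ is a single residue class of indices modulo $p$ when $\Div{p}{x}$ and a disjoint union of two such classes otherwise. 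So $A_p\cap A_q$ is, up to $O(1)$, a disjoint union of $m_pm_q$ sets of the form $F(x,p,\epsilon)\cap F(x,q,\delta)$, where $m_p\in\{1,2\}$ is the number of classes. The sets for different sign pairs are disjoint because they come from disjoint pieces of $A_p$ or of $A_q$. Therefore
$$|A_p\cap A_q|=\sum |F(x,p,\epsilon)\cap F(x,q,\delta)|+O(1),$$
with $1,2,2,4$ summands in the four cases.

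Next I would estimate each summand. Each $F(x,p,\epsilon)$ is the set of $y_{k+pr}$ with $k+pr\le n$, for the appropriate position $k=k_1$ or $k_2$; removing the possible exceptional element $y_s$ changes things by at most $1$. Lemma \ref{lemindfp} treats the pair $(+,+)$, and the same argument applies verbatim to the other sign pairs with $k_1$ replaced by $k_2$. Since $\gcd(p,q)=1$, the equation $pa-qb=c$ is always solvable, and the admissible parameter $t$ ranges over an interval. Its length is
$$\min\left\{\tfrac{\lfloor (n-k_p)/p\rfloor -ca_0}{q},\ \tfrac{\lfloor (n-k_q)/q\rfloor -cb_0}{p}\right\}-\max\left\{-\tfrac{ca_0}{q},-\tfrac{cb_0}{p}\right\}.$$
Both endpoint shifts represent the same starting index, so the length equals $\frac{n}{pq}+O(1)$. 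Equivalently, by the Chinese Remainder Theorem, the indices in the intersection form one residue class modulo $pq$ in $\{1,\dots,n\}$, which has $n/pq+O(1)$ elements. Thus each summand is $\frac{n}{pq}+O(1)$.

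Dividing by $n$ gives $P(A_p\cap A_q)=\frac{m_pm_q}{pq}+O(1/n)$. Since $p,q<\sqrt{x}$ and $n\approx x/12$, the error $O(1/n)$ is negligible compared with $1/(pq)>12/(\ldots)\cdot 1/n$ once $n$ is large (indeed $pq<x=12n+4$). This yields the four stated values $\frac{1}{pq},\frac{2}{pq},\frac{2}{pq},\frac{4}{pq}$.

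The main obstacle is making the $O(1)$ bookkeeping honest: handling the removed element $y_s$, the floor functions, and the coincidence cases uniformly across all sign pairs. I would handle it by the CRT reformulation rather than the explicit min/max formula, since a single residue class modulo $pq$ in $[1,n]$ has between $\lfloor n/pq\rfloor$ and $\lceil n/pq\rceil$ elements, and each exceptional removal costs at most one element. Note that "$\approx$" here means equality up to an additive error $O(1/n)$, with an absolute constant of at most about $8$.
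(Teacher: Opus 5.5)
Your argument is the paper's own. The paper also splits $A_p\cap A_q$ into the four sign-pair intersections $F(x,p,\epsilon)\cap F(x,q,\delta)$. It then uses the disjoint-or-nested behaviour of $F(x,p,\pm)$ from Lemma \ref{lemfp4} to get $1,2,2,4$ distinct pieces, and estimates each piece by $n/(pq)$ via Lemma \ref{lemindfp}. Replacing the explicit min/max count by the Chinese Remainder Theorem is a cleaner way to do the bookkeeping: each piece is one residue class of indices modulo $pq$ in $\{1,\dots,n\}$, and $A_p\cap A_q$ loses at most the two exceptional elements $y_s$. This gives the rigorous form $P(A_p\cap A_q)=m_pm_q/(pq)+O(1/n)$ with an explicit constant. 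It also avoids relying on the printed formula of Lemma \ref{lemindfp}, whose $+\max$ should read $-\max$ (with floor and ceiling), as the worked example after it shows.

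One step is wrong: your claim that the $O(1/n)$ error is negligible compared with $1/(pq)$ because $pq<x=12n+4$. That inequality only gives $1/(pq)>1/(12n+4)$. So the main term is at least of the same order as the error, but need not dominate it. Take $p,q$ to be the two largest primes below $\sqrt{x}$. Then $pq\approx 12n$, so each piece $F(x,p,\epsilon)\cap F(x,q,\delta)$ has $0$ or $1$ elements. Hence $P(A_p\cap A_q)$ is a small integer multiple of $1/n$, while $m_pm_q/(pq)\le 4/(pq)\approx 1/(3n)$, so no relative approximation holds there. You have two consistent options. You can keep your additive reading of $\approx$, which your argument does prove for all $p,q<\sqrt{x}$. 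Or, if you want $\approx$ in the relative sense, assume $pq=o(n)$; this is precisely the paper's standing proviso that $n$ be ``large enough in comparison with primes $p,q$''.
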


\begin{proof}
The proof follows immediately from Lemma \ref{lemindfp} and the above discussion.
\end{proof}

\begin{example}
Suppose $n=1000$, $x=12n+4$. Let $p=5$ and $q=7$. We have \\
$k_5=[(5-[x]_5^1-6)\times 12^{5-2}]_5+1=1$ and $k_7=[(7-[x]_7^1-6)\times 12^{7-2}]_7+1=7$. \\
Also $\lfloor \frac{1000-k_5}{5}\rfloor=\lfloor 199.8\rfloor = 199$ and $\lfloor \frac{1000-k_7}{7}\rfloor=\lfloor 141.86\rfloor = 141$. Then we have \\
$F(x,5,+)=\Set{1+5j}{j\in\Int,\ 0\leq j\leq 199}$ and $F(x,7,+)=\Set{7+7i}{i\in\Int,\ 0\leq i\leq 141}$. Now a solution for the diophantine equation $5a-7b=1$ is $a_0=3$, $b_0=2$. Also $c=k_7-k_5=6$. Thus the general solutions are $a=18+7t$ and $b=12+5t$, $t\in\Int$. Thus the required range for $t$ is given by
$\max\, \left\{-\frac{ca_0}{q},-\frac{cb_0}{p}\right\}\leq t\leq \min\left\{\frac{\lfloor \frac{n-k_p}{p}\rfloor -ca_0}{q}, \frac{\lfloor \frac{n-k_q}{q}\rfloor -cb_0}{p}\right\}$ 
$ \Longrightarrow\ \max\, \left\{-\frac{18}{7},-\frac{12}{5}\right\}\leq t\leq \min\left\{\frac{199 -18}{7}, \frac{141 -12}{5}\right\}\ \Longrightarrow\ \ -2.4\leq t \leq 25.8$.
Since $t\in\Int$, we have $-2\leq t\leq 25$. Thus $|F(x,5,+)\cap F(x,7,+)|=25+2+1=28$. In fact 
$F(x,5,+)\cap F(x,7,+)=\{21, 56, 91, 126, 161, 196,$\\ 
$231, 266, 301, 336, 371, 406, 441, 476, 511, 546, 581, 616, 651, 686, 721, 756, 791, 826, 861, 896, 931, 966\}$.

\vspace{1em}
\noindent
Similarly, we can show that $|F(x,5,-)\cap F(x,7,+)|=28$, $|F(x,5,+)\cap F(x,7,-)|=29$, $|F(x,5,-)\cap F(x,7,-)|=29$. Thus 
$|(F(x,5,+)\cup F(x,5,-))\cap (F(x,7,+)\cup F(x,7,-))|=114$ as $F(x,5,+)\cap F(x,5,-)=\emptyset$ and $F(x,7,+)\cap F(x,7,-)=\emptyset$ (for $\nDiv{5}{x}$ and $\nDiv{7}{x}$). Thus $P(A_5\cap A_7)=\frac{114}{1000}=0.114$. Note that the approximate value is $\frac{4}{35}=0.114286$. Also note that the approximate value of $|F(x,5,+)\cap F(x,7,+)|=\frac{1000}{35}=28.5714$.
\end{example}

\noindent
Therefore, for large $n$, if we approximate for any prime $p>3$,
$$P(A_p)=(|F(x,p,+)\cup F(x,p,-)|)/n\approx\left\{%
\begin{array}{ll}
\frac{1}{p} & \text{if } \Div{p}{x}\\
\frac{2}{p} & \text{if } \nDiv{p}{x}.\\
\end{array}\right.$$
Then we have $P(A_p\cap A_q)=P(A_p) P(A_q)$ which implies the events are (approximately) independent. Thus we may approximate $T(n)$ by the following function $\tau_3(n)$, where $pr_3(p)$ is the above approximation of $P(A_p)$.
\begin{equation}\label{eqnetaap}
\tau_3(n)=1-\prod\limits_{i=1}^{r_x} (1-pr_3(p_i)).
\end{equation}

\begin{table}[hb]
{\footnotesize $$\begin{array}{|c|l|l|l|l|l|l|l|l|l|l|}
\hline
n & 10 & 100 & 1000 & 10000 & 10^5 & 10^6 & 10^7 & 10^8 & 10^9 & 10^{10}\\
\hline
T(n) & 0.5 & 0.73 & 0.888 & 0.9251 & 0.94666 & 0.965814 & 0.974293 & 0.975922 & 0.983884 & 0.986781\\
\hline
\tau_3(n) & 0.649351 & 0.776471 & 0.893623 & 0.923243 & 0.943602 & 0.962473 & 0.971195 & 0.972605 & 0.981462 & 0.984656\\
\hline
\end{array}$$}
\caption{Comparison table for $T(n)$ and $\tau_3(n)$ for $x=12n+4$.}\label{tabtnetaapn}
\end{table}

\vspace{1em}
\noindent
Table \ref{tabtnetaapn} compares the actual value of $T(n)$ with the new approximation function $\tau_3(n)$. We again observe that the values of $\tau_3(n)$ merges with $\tau_1(n)$ or $\tau_2(n)$ for large $n$. We return back to further approximation to get a more compact closed form in \S 5.

\section{Counting degrees of $x=12n+k$ for $k=0,2,6,8,10$ in G(x/2)}\label{seckother}

In this section, we count degrees of even integers $x=12n+k$ ($n\in\Nat$) where $k=0,2,6,8,10$. 

\begin{theorem}\label{thm24810}
Let $n\in\Nat$. Then the degree of $x=12n+k$, $(k=2,4,8,10)$ in $G(x/2)$ is $n_1+1-f(n)$ or $n_1-f(n)$, according as $x-3$ is prime or not, where
$$f(n)=|\bigcup\limits_{\substack{p\, =\, \text{prime}\\ 3< p<\sqrt{x}}} \ \left(F(x,p,+)\cup F(x,p,-)\right)|,$$
$$|F(x,p,+)\cup F(x,p,-)|=\left\{\begin{array}{ll}
\lfloor (n_1-k_1)/p\rfloor+1, & \text{if } \Div{p}{x}\\
\lfloor (n_1-k_1)/p\rfloor + \lfloor (n_1-k_2)/p\rfloor +1, & \text{if } \nDiv{p}{x}\text{ and } \Div{3}{p+c}\\
\lfloor (n_1-k_1)/p\rfloor + \lfloor (n_1-k_2)/p\rfloor +2, & \text{if } \nDiv{p}{x}\text{ and } \nDiv{3}{p+c},\\
\end{array}\right.$$
$k_1=[(p-[x]_p^1-b)\times 12^{p-2}]_p+1$ and $k_2=[(x-b)\times 12^{p-2}]_p+1$, where $[a]_p=i\in\set{0,1,2,\ldots,p-1}$ and $[a]_p^1=i\in\set{1,2,\ldots,p}$ such that $\Mod{a}{i}{p}$, $c=1$ for $k=4,10$ and $c=-1$ for $k=2,8$, $b=6$ for $k=4,8$ and $b=12$ for $k=2,10$, $n_1=n+1$ for $k=8$ and $n_1=n$ for $k=2,4,10$.
\end{theorem}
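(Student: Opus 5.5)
The plan is to repeat the argument of Section \ref{sec12n4} (Lemma \ref{lem12n4}, Lemma \ref{lemfp4}, Theorem \ref{thm14}) for each residue $k\in\set{2,4,8,10}$. The only changes are which residue class modulo $12$ the candidate neighbours lie in, and where the excluded element $x-y=2p$ sits.

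First we use Theorem \ref{thmg3n} and Remark \ref{pmm3rem} to locate the neighbours of $x$ in $G(3,x/2)$. Since $3\nmid x$, $x$ is a path vertex. By Proposition \ref{propngb}, every neighbour of $x$ in $G(x/2)$ is a neighbour in $G(3,x/2)$, so it suffices to work there.

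\emph{Case $4\mid x$ ($k=4,8$).} Here $x$ is adjacent to $S_1=\Set{12m+6}{}$, and the neighbours below $x$ are $12m+6<x$. For $k=4$ these are $m<n$, giving $n$ elements. For $k=8$ they are $m\le n$, giving $n+1=n_1$ elements; this is the source of $b=6$ and $n_1$.

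\emph{Case $4\nmid x$ ($k=2,10$).} Here $x$ is adjacent to $S_2=\Set{12m}{m\in\Nat}$, and $12m<x$ gives exactly $n$ elements, written $12(m'+1)=12m'+12$ with $m'\ge 0$; hence $b=12$ and $n_1=n$.

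\emph{Path neighbours.} The only other possible neighbour on the path below $x$ is $x-6$, which requires $(x\mp 6)/2\in\set{3,x-3}$, i.e.\ $x-3$ prime. The option $x-1$ prime is impossible when $k=4,10$, since then $3\mid x-1$. For $k=2,8$ we need to check that $x-2$ does not sneak in as a path neighbour. I will verify this from the path structure: the path vertices adjacent to $x$ in $G(3,x/2)$ are $x\pm 2$ or $x\pm 6$ according to the residue, and I will confirm that for $k=2,8$ only $x-6$ applies (this may need care with $1$ and $3$). This accounts for the ``$+1$ iff $x-3$ prime''.

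Next, for each prime $3<p<\sqrt{x}$, define $F(x,p,\pm)$ on the new index set $\set{y_1,\dots,y_{n_1}}$ with $y_t=12(t-1)+b$. Reducing modulo $p$ gives a purely periodic sequence of period $p$ that hits every residue exactly once per block, since $12$ is invertible mod $p$. Exactly as in Lemma \ref{lem12n4}, $F(x,p,+)$ is an arithmetic progression of indices starting at $k_1$, where $12(k_1-1)+b\equiv -x$, so $k_1=[(p-[x]^1_p-b)12^{p-2}]_p+1$. Likewise $F(x,p,-)$ starts at $k_2=[(x-b)12^{p-2}]_p+1$, with $y_s$ removed when $x-y_s=2p$. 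The cardinalities $\lfloor (n_1-k_i)/p\rfloor+1$ follow.

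The exceptional index is determined by $12(s-1)+b=x-2p=12n+k-2p$, so $s$ is an integer iff $12\mid k-b-2p$, i.e.\ $6\mid p-(k-b)/2$. We have $(k-b)/2=-1,-5,1,-1$ for $k=4,10,8,2$ respectively. Reducing mod $3$ (with $p$ odd), this is $3\mid p+1$ for $k=4,10$ and $3\mid p-1$ for $k=2,8$, which is the stated $c$. We must also check that the index $s$ lies in $[1,n_1]$, which holds since $2p<x$. As in Lemma \ref{lemfp4}, $F(x,p,+)=F(x,p,-)$ up to the exceptional element when $p\mid x$, and the two sets are disjoint otherwise. This yields the three-case formula.

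Finally, take the union over $p$. By Proposition \ref{propngb}, $y$ is a non-neighbour iff it lies in $F(x,p,\pm)$ for some $p$. Subtracting $f(n)$ from $n_1$ and adding the possible path neighbour $x-6$ gives the degree.

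The main obstacle is the bookkeeping in the first and exceptional-index steps. Specifically, it is confirming that the neighbour set below $x$ in $G(3,x/2)$ is exactly $S_i$ together with possibly $x-6$ for each $k$, and getting the correct $n_1$ and $c$. I will check each claim on small examples such as $x=14,20,22,26$.
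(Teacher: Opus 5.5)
Your proposal is correct and follows essentially the same route as the paper: you identify the candidate class ($12m+6$ with $b=6$, or $12m$ with $b=12$) together with the lone path neighbour $x-6$, then rerun Lemmas \ref{lem12n4}--\ref{lemfp4} and locate the excluded index $s$ to obtain the condition on $c$. Two small fixes are needed. First, your values of $(k-b)/2$ for $k=10$ and $k=2$ are swapped; they are $-1$ and $-5$ respectively, and these correct values are what actually give your stated $c$. Second, your worry about $x-2$ for $k=2,8$ settles at once: $x-2\equiv 0 \pmod 6$ is the top element $y_{n_1}$ of the candidate class, not a path vertex, exactly as the paper notes.
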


\begin{proof}
First we consider $x=12n+8$, $n\in\Nat$. If $12n+5$ is prime, then $x$ can be expressed as the sum of an odd prime plus $3$. In all other cases $x$ is adjacent to only even numbers of the form $12m+6$, $m\in\Nat\cup\set{0}$, $m\leq n$. This includes the case when $12n+7$ is prime and $x$ can be expressed as the sum of an odd prime plus $1$. Thus as before we have expressions and formulas for $F(x,p,+)$ and $F(x,p,-)$ similar to the case of $x=12n+4$ with the exception that all instances of $n$ in the formulas of Lemma \ref{lemfp4} are to be replaced by $n_1=n+1$. In this case $s=\frac{12n+8-(2p-12)-6}{12}=\frac{6n-p+7}{6}$.

\vspace{1em}
\noindent
Next we consider $x=12n+2$, $n\in\Nat$. If $12n-1$ is prime, then $x$ can be expressed as the sum of an odd prime plus $3$. In all other cases $x$ is adjacent to only even numbers of the form $12m$ ($m\in\Nat$, $m\leq n$). This includes the case when $12n+1$ is prime and $x$ can be expressed as the sum of an odd prime plus $1$. Thus $k_1=m+1$, where $[12m+12]_p=[p-[x]_p^1]_p$ which implies $k_1=[(p-[x]_p^1-12)\times 12^{p-2}]_p+1$ and $k_2=[(x-12)\times 12^{p-2}]_p+1$. In this case $s=\frac{12n+2-(2p-12)-12}{12}=\frac{6n-p+1}{6}$.

\vspace{1em}
\noindent
Let $x=12n+10$, $n\in\Nat$. Since $12n+9$ cannot be a prime number, $x$ cannot be expressed as the sum of an odd prime and $1$. But if $12n+7$ is prime, then $x$ can be expressed as the sum of an odd prime plus $3$. In all other cases $x$ is adjacent to only even numbers of the form $12m$ ($m\in\Nat$). So we have same formulas for $k_1$ and $k_2$ as in the case of $x=12n+2$ with only exception that $s=\frac{12n+10-(2p-12)-12}{12}=\frac{6n-p+5}{6}$. For $x=12n+4$, see Lemma \ref{lemfp4} and Theorem \ref{thm14}.
\end{proof}

\noindent
Comparison tables similar to \S \ref{sec12n4} are given in Tables \ref{tabtetan2}, \ref{tabtetan8}, \ref{tabtetan10} in Appendix, where $T(n)=\frac{f(n)}{n_1}$. 

\begin{theorem}\label{thm612}
Let $n\in\Nat$. Then the degree of $x=12n+k$, $k=0,6$ in $G(x/2)$ is $n+n_1-f(n)$, where
$$f(n)=|\bigcup\limits_{\substack{p\, =\, \text{prime}\\ 3< p<\sqrt{x}}} \ \left(F(x,p,+)\cup F(x,p,-)\right)|,$$
$|F(x,p,+)\cup F(x,p,-)|=$
{\footnotesize $$\left\{\begin{array}{ll}
\lfloor (n_1-k_1)/p\rfloor+\lfloor (n-k_3)/p\rfloor+2, & \text{if } \Div{p}{x}\\
\lfloor (n_1-k_1)/p\rfloor + \lfloor (n_1-k_2)/p\rfloor +\lfloor (n-k_3)/p\rfloor + \lfloor (n-k_4)/p\rfloor +3, & \text{if } \nDiv{p}{x}\\
\end{array}\right.$$}
$k_1=[(p-[x]_p^1-b_1)\times 12^{p-2}]_p+1$, $k_2=[(x-b_1)\times 12^{p-2}]_p+1$, $k_3=[(p-[x]_p^1-b_2)\times 12^{p-2}]_p+1$, $k_4=[(x-b_2)\times 12^{p-2}]_p+1$, where $[a]_p=i\in\set{0,1,2,\ldots,p-1}$ and $[a]_p^1=i\in\set{1,2,\ldots,p}$ such that $\Mod{a}{i}{p}$, $b_1=2$, $b_2=10$ for $k=0$ and $b_1=4$, $b_2=8$ for $k=6$, $n_1=n$ for $k=0$ and $n_1=n+1$ for $k=6$ 
\end{theorem}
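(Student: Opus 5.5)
The plan is to repeat the analysis of Lemma \ref{lem12n4}, Lemma \ref{lemfp4} and Theorem \ref{thm24810}. The new feature for $x\equiv 0,6 \pmod{12}$ is that $x$ is divisible by $3$. So $x$ is not restricted to one residue class of neighbours. Take $y\in\{2,\dots,x-2\}$ with $\frac{x\pm y}{2}$ odd and not a non-trivial multiple of $3$. Since $3\mid x$, we have $\frac{x+y}{2}+\frac{x-y}{2}\equiv 0\pmod 3$. Hence either both summands are $\not\equiv 0 \pmod 3$, or both are $\equiv 0 \pmod 3$. The second case is allowed only when one summand equals $3$, i.e.\ $y=x-6$, and then $\frac{x+y}{2}=x-3\equiv 0 \pmod 3$ is a non-trivial multiple of $3$, hence excluded.

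\textbf{Step 1: the candidate classes.} First I will determine the candidate neighbours of $x$ in $G(3,x/2)$. For $x=12n$, the condition that $\frac{x\pm y}{2}$ are odd and prime to $3$ forces $y\equiv 2$ or $10 \pmod{12}$. Writing $y=12m+b$ with $b\in\{b_1,b_2\}=\{2,10\}$:
\begin{itemize}
\item $b=2$ gives $m=0,\dots,n-1$, so $n_1=n$ values;
\item $b=10$ gives $m=0,\dots,n-1$, so $n$ values.
\end{itemize}
For $x=12n+6$ the classes are $y\equiv 4,8 \pmod{12}$. Here $12m+4\le x-2$ gives $n+1=n_1$ values, and $12m+8$ gives $n$ values. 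The two classes are disjoint, so in $G(3,x/2)$ the vertex $x$ has exactly $n+n_1$ lower neighbours. Every neighbour in $G(x/2)$ lies among them, and by Proposition \ref{propngb} the degree equals $n+n_1$ minus the number of those $y$ for which some prime $3<p<\sqrt x$ divides $x+y$ or $x-y$, except when $x-y=2p$. This gives the form $n+n_1-f(n)$. There is no ``$+1$'' correction, since the $3$-case is excluded as above.

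\textbf{Step 2: counting for a fixed $p$.} For each class I will apply Lemma \ref{lem12n4} verbatim with $6$ replaced by $b_i$. Arranging the class in increasing order, the residues mod $p$ repeat with period $p$. The elements with $p\mid x+y$ occupy positions $k_1+pr$ (respectively $k_3+pr$), and those with $p\mid x-y$ occupy positions $k_2+pr$ (respectively $k_4+pr$). Inverting $12$ by Fermat as in Lemma \ref{lemfp4} gives the stated formulas for $k_1,\dots,k_4$, and the counts are $\lfloor (n_1-k_1)/p\rfloor+1$, etc. When $p\mid x$ the $+$ and $-$ sets coincide within each class, giving two floor terms.

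\textbf{Step 3: the exceptional element $y=x-2p$ (main obstacle).} This element lies in exactly one of the two classes. Since $p>3$, $x-2p\not\equiv 0\pmod 3$ and it is even, so it falls into one of the two admissible residues mod $12$. It is counted in the $-$ family of that class, so exactly one element is removed overall. This explains why the constant is $3$ rather than $4$ when $p\nmid x$, and $2$ when $p\mid x$: there the removed element coincides with a $+$ element, which is still excluded because $p\mid x+y$, so nothing is subtracted.

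I will need to check carefully that $x-2p\ge 2$, which holds because $p<\sqrt x$. I also expect checking this bookkeeping uniformly for both $k=0$ and $k=6$ to be the delicate point. Finally, $f(n)$ is the union over $p$, exactly as in (\ref{eqfn}).
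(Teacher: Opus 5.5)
Your proposal is correct and follows essentially the same route as the paper: split the candidate neighbours of $x$ in $G(3,x/2)$ into the two classes $12m+b_1$ (with $n_1$ members) and $12m+b_2$ (with $n$ members), apply the single-class count of Lemma \ref{lemfp4} to each, and subtract the one exceptional element $y=x-2p$ when $p\nmid x$. The only difference is in locating that element. The paper computes its position $s$ in each class and checks integrality, which reduces to whether $3\mid p-1$ or $3\mid p+1$. You instead observe directly that $x-2p$ is itself a candidate, so it lies in exactly one of the two disjoint classes; for this you need $x-2p\equiv x-2 \pmod 4$ as well as $3\nmid x-2p$, and that holds because $p$ and $x-p$ are both odd.
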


\begin{proof}
We first note that $3$ cannot divide both $p+1$ and $p-1$ simultaneously. Also if $\nDiv{3}{p+1}$ and $\nDiv{3}{p-1}$, then $\Div{3}{p}$ which cannot arise as $p$ is prime and $p>3$. Thus the remaining case that either $\Div{3}{p+1}$ or $\Div{3}{p-1}$ (but not both) will always occur when $\nDiv{p}{x}$.

\vspace{1em}
\noindent
Suppose $x=12n+6$, $n\in\Nat$. If $12n+5$ is prime, then $x$ can be expressed as a sum of an odd prime and $1$. But $x-3=12n+3$ is not a prime. So $x$ cannot be expressed as a sum of an odd prime and $3$. Other neighbors of $x$ in $G(x/2)$ are either of the form $12m+4$, $m\in\Int$, $0\leq m\leq n$ or of the form $12m+8$, $m\in\Int$, $0\leq m\leq n-1$. Now in order to calculate the number of neighbors of the form $12m+4$, we evaluate $k_1$ and $k_2$ as before. Here $k_1=m+1$, where $[12m+4]_p=[p-[x]_p^1]_p$ which implies $k_1=[(p-[x]_p^1-4)\times 12^{p-2}]_p+1$. Similarly, $k_2=[(x-4)\times 12^{p-2}]_p+1$ and $s=\frac{12n+6-(2p-12)-4}{12}=\frac{6n-p+7}{6}$. On the other hand, for neighbors of the form $12m+8$, we have $k_1=[(p-[x]_p^1-8)\times 12^{p-2}]_p+1$. Similarly, $k_2=[(x-8)\times 12^{p-2}]_p+1$ and $s=\frac{12n+6-(2p-12)-8}{12}=\frac{6n-p+5}{6}$.

\vspace{1em}
\noindent
Next suppose $x=12n$, $n\in\Nat$. As before, if $12n-1$ is prime, then $x$ is a sum of an odd prime and $1$. Here also $x-3=12n-3$ is not prime. So $x$ cannot be expressed as a sum of an odd prime and $3$. Other neighbors of $x$ in $G(x/2)$ are either of the form $12m+2$ or of the form $12n+10$. For $12m+2$ form, $k_1=[(p-[x]_p^1-2)\times 12^{p-2}]_p+1$. Similarly, $k_2=[(x-2)\times 12^{p-2}]_p+1$ and $s=\frac{12n-(2p-12)-2}{12}=\frac{6n-p+5}{6}$ and for $12m+10$ form, $k_1=[(p-[x]_p^1-10)\times 12^{p-2}]_p+1$. Similarly, $k_2=[(x-10)\times 12^{p-2}]_p+1$ and $s=\frac{12n-(2p-12)-10}{12}=\frac{6n-p+1}{6}$.
\end{proof}

\noindent
Comparison tables are given in Tables \ref{tabtetan0} and \ref{tabtetan6} in Appendix, where $T(n)=\frac{f(n)}{n+n_1}$. 

\vspace{1em}
\noindent
In the following we obtain another formula for the degree of an even integer $x$ in $G(x/2)$. We begin with the following definition.

\begin{definition}
Let $x$ be an even integer, $x\geq 12$. Let $PR(x)$ be the set of primes $p$ such that $3<p<\sqrt{x}$. Let $p\in PR(x)$. We define
$$g(p)=|F(x,p,+)\cup F(x,p,-)|$$
where a formula for $|F(x,p,+)\cup F(x,p,-)|$ is given in Theorems \ref{thm24810} and \ref{thm612}.\\[0.5em]
 Now for any two distinct primes $p,q\in PR(x)$, we have\\[0.5em]
$(F(x,p,+)\cup F(x,p,-))\cap (F(x,q,+)\cup F(x,q,-))$
$$= (F(x,p,+)\cap F(x,q,+))\cup (F(x,p,+)\cap F(x,q,-))\cup (F(x,p,-)\cap F(x,q,+))\cup (F(x,p,-)\cap F(x,q,-)).$$
In general, let $A\subseteq PR(x)$, $|A|>1$. We define 
$$g(A)= \left|\bigcap\limits_{p\in A} (F(x,p,+)\cup F(x,p,-))\right|= \left|\bigcup\limits_{B\subseteq A} \left\{ \left(\bigcap\limits_{p\in B} F(x,p,+)\right)\cap \left(\bigcap\limits_{p\in A\setminus B} F(x,p,-)\right)\right\}\right|.$$
We have noticed before (e.g., see Lemma \ref{lemfp4}), $F(x,p,-)\subseteq F(x,p,+)$ when $\Div{p}{x}$ and in all other cases, $F(x,p,-)\cap F(x,p,+)=\emptyset$. Let $A_1=\Set{p\in A}{\nDiv{p}{x}}$. Then we define
$$u(B)=\left(\bigcap\limits_{p\in B} F(x,p,+)\right)\cap \left(\bigcap\limits_{p\in A_1\setminus B} F(x,p,-)\right).$$
Thus we have
\begin{equation} \label{formga}
g(A)= \sum\limits_{B\subseteq A} |u(B)|=\sum\limits_{B\subseteq A} \left|\left(\bigcap\limits_{p\in B} f(x,p,+)\right)\cap \left(\bigcap\limits_{p\in A_1\setminus B} f(x,p,-)\right)\right|.
\end{equation}
\end{definition}

\vspace{1em}
\noindent
In order to calculate each term under summation, say, $u(B)$, we proceed as follows:

\vspace{1em}
\noindent
{\bf Computation of $|u(B)|$:}\  Let us first explain for $x=12n+k$, $n\in\Nat$, where $k=2,4,8$ or $10$. Let $p\in PR(x)$. Let $\Mod{x}{i}{p}$ where $1\leq i\leq p$ and $j=p-i$. Let $k_1(p)$ and $k_2(p)$ be the positions of $j$ and $i$ in the sequence $S=\Set{[12m+b]_p}{m=0,1,2,\ldots,p-1}$, where $b=6$ for $k=4,8$ and $b=12$ for $k=2,10$ (see Theorem \ref{thm24810}). Let $A\subseteq PR(x)$, $|A|>1$ and $B\subseteq A$. Let $x_p=k_1(p)$ for all $p\in B$ and $y_p=k_2(p)$ for all $p\in A_1\setminus B$. Let $h=h(B)$ be the unique solution in modulo $v=\prod\limits_{p\in A_1\cup B} p$ by Chinese Remainder Theorem such that 
$$\Mod{h}{x_p}{p} \text{ for all } p\in B \text{ and }\Mod{h}{y_p}{p} \text{ for all }p\in A_1\setminus B.$$ 
If $h=0$, then we take $h=v$. Now if $h>n$, then $|u(B)|=0$. Then to count $|u(B)|$ we have to enumerate the number of terms $y=12m+b\in S$ when $m=h-1, h+v-1, h+2v-1,\ldots, h+\lfloor\frac{n-h}{v}\rfloor v-1$ (the term $-1$ appears as we take the sequence $S$ starting from $m=0$). Also we want to exclude (from the list of non-neighbors of $x$) the last term $y\in S$ such that $\frac{x-y}{2}=p$ but $\nDiv{p}{x+y}$. Thus 
we compute $e=\frac{1}{2}(12n+k-12(h+\lfloor\frac{n-h}{v}\rfloor v-1)-b)$ $=6(n-h-\lfloor\frac{n-h}{v}\rfloor v)+\frac{k+12-b}{2}$ and $e_1=\frac{1}{2}(12n+k+12(h+\lfloor\frac{n-h}{v}\rfloor v-1)+b)$ $=6(n+h+\lfloor\frac{n-h}{v}\rfloor v)+\frac{k+12+b}{2}$ and get the following formula for $|u(B)|$:
\begin{equation}\label{formub1}
|u(B)|=\left\{
\begin{array}{ll}
\lfloor\frac{n-h}{v}\rfloor & \text{if } e \text{ is prime, } \Div{e}{v} \text{ and }\nDiv{e}{e_1}\\
\lfloor\frac{n-h}{v}\rfloor +1 & \text{otherwise}.
\end{array}\right.
\end{equation}

\noindent
In the similar way, in the case of $x=12n+k$, $n\in\Nat$, where $k=0$ or $6$, we compute $k_1(p),k_2(p)$, $k_3(p),k_4(p)$ and $b_1,b_2$ as in Theorem \ref{thm612}. Let $A\subseteq PR(x)$, $|A|>1$ and $B\subseteq A$. We compute $h=h(B)$ same as above with the values of $k_1(p)$, $k_2(p)$ and $b_1$ in place of $b$ (as in Theorem \ref{thm612}). Accordingly, we get $|u_1(B)|$ (say) as in (\ref{formub1}) (replacing $b$ by $b_1$). 

\vspace{1em}
\noindent
Next we consider $x_p=k_3(p)$ for all $p\in B$ and $y_p=k_4(p)$ for all $p\in A_1\setminus B$. Let $h_1=h_1(B)$ be the unique solution in modulo $v=\prod\limits_{p\in A_1\cup B} p$ by Chinese Remainder Theorem such that 
$$\Mod{h_1}{x_p}{p} \text{ for all } p\in B \text{ and }\Mod{h_1}{y_p}{p} \text{ for all }p\in A_1\setminus B.$$ 
If $h_1=0$, then we take $h_1=v$. Now if $h_1>n$, then $|u_2(B)|=0$.\\
 Let $e_2=6(n-h_1-\lfloor\frac{n-h_1}{v}\rfloor v)+\frac{k+12-b_2}{2}$ and $e_3=6(n+h_1+\lfloor\frac{n-h_1}{v}\rfloor v)+\frac{k+12+b_2}{2}$. We get 
\begin{equation}\label{formub2}
|u_2(B)|=\left\{
\begin{array}{ll}
\lfloor\frac{n-h_1}{v}\rfloor & \text{if } e_2 \text{ is prime, } \Div{e_2}{v} \text{ and }\nDiv{e_2}{e_3}\\
\lfloor\frac{n-h_1}{v}\rfloor +1 & \text{otherwise}.
\end{array}\right.
\end{equation}
So we have, in this case, $|u(B)|=|u_1(B)|+|u_2(B)|$. 

\vspace{1em}
\noindent
Finally, we write $g(\set{p})=g(p)$. Then we have the following formula for $f(n)$, where $f(n)$ is defined in Theorems \ref{thm24810} and \ref{thm612}.

\begin{theorem}\label{thcompfn}
Let $n\in\Nat$, $x=12n+k$, where $k=0,2,4,6,8$ or $10$. Then 
$$f(n)=\sum\limits_{\emptyset\neq A\subseteq PR(x)} (-1)^{|A|+1}\ \left(\sum\limits_{B\subseteq A} |u(B)|\right).$$
\end{theorem}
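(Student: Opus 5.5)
The plan is to apply the inclusion--exclusion principle to the union defining $f(n)$ in Theorems \ref{thm24810} and \ref{thm612}. For each $p\in PR(x)$ put $E_p=F(x,p,+)\cup F(x,p,-)$; for $k=0,6$ take $E_p$ as the union over both residue families ($b_1$ and $b_2$). Then $f(n)=|\bigcup_{p\in PR(x)}E_p|$. Since $PR(x)$ is finite, inclusion--exclusion gives
$$f(n)=\sum_{\emptyset\neq A\subseteq PR(x)}(-1)^{|A|+1}\left|\bigcap_{p\in A}E_p\right|=\sum_{\emptyset\neq A\subseteq PR(x)}(-1)^{|A|+1}g(A),$$
with $g(\set{p})=g(p)$. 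So the theorem reduces to showing $g(A)=\sum_{B\subseteq A}|u(B)|$, which is formula (\ref{formga}).

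To establish (\ref{formga}), I would distribute the intersection over the unions: $\bigcap_{p\in A}E_p$ is the union, over all choices of sign $\pm$ for each $p\in A$, of $\bigcap_p F(x,p,\pm)$. The key point is to make this union disjoint, and here I use the dichotomy from the proof of Lemma \ref{lemfp4}.
\begin{itemize}
\item If $p\nmid x$, then $F(x,p,+)\cap F(x,p,-)=\emptyset$, so each element of the intersection picks a unique sign at $p$.
\item If $p\mid x$, then $F(x,p,-)\subseteq F(x,p,+)$, so $E_p=F(x,p,+)$ and only the $+$ choice is needed.
\end{itemize}
Thus each $y\in\bigcap_{p\in A}E_p$ determines a unique set $B$: it consists of all $p\in A$ with $p\mid x$, together with those $p\in A_1$ for which $y\in F(x,p,+)$. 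Hence $y$ lies in exactly one $u(B)$. Subsets $B$ that omit some $p\mid x$ should contribute nothing. I would handle this either by the convention that such $u(B)$ is empty, or by reading the sum as running over $B\supseteq A\setminus A_1$; I would state this convention explicitly, since the definition of $u(B)$ as written is slightly loose on this point. Summing over $B$ then gives (\ref{formga}). For $k=0,6$ the two residue families $12m+b_1$ and $12m+b_2$ are disjoint subsets of the candidate neighbours, so everything splits additively and $|u(B)|=|u_1(B)|+|u_2(B)|$.

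Finally I would justify that $|u(B)|$ is computed correctly by (\ref{formub1}) and (\ref{formub2}). By Lemma \ref{lem12n4}, membership of $y_m$ in $F(x,p,+)$, resp.\ $F(x,p,-)$, is the condition $m\equiv k_1(p)$, resp.\ $m\equiv k_2(p)$, modulo $p$. The exceptional element $y_s$ must then be removed. The Chinese Remainder Theorem combines these congruences into a single class $h$ modulo $v$, so the indices $m\le n$ in that class are $h,h+v,\ldots$, giving $\lfloor (n-h)/v\rfloor+1$ of them.

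I expect the main obstacle to be the exceptional element $y$ with $(x-y)/2=p$. That element is excluded from $F(x,p,-)$ but may still belong to $F(x,p,+)$, or may be counted via other primes. I would check that it can only be the last term of the progression, since it is the largest $y$ with $p\mid x-y$ and $y<x$. I would then verify that the condition ``$e$ prime, $e\mid v$, $e\nmid e_1$'' exactly detects when that last term must be dropped. Once this is done, substituting into the inclusion--exclusion sum completes the proof.
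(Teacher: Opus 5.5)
Your proof is correct and follows essentially the paper's own route. Inclusion--exclusion over $PR(x)$ reduces the statement to (\ref{formga}), which you obtain by splitting $\bigcap_{p\in A}\bigl(F(x,p,+)\cup F(x,p,-)\bigr)$ disjointly into the sets $u(B)$, using $F(x,p,-)\subseteq F(x,p,+)$ for $p\mid x$ and $F(x,p,+)\cap F(x,p,-)=\emptyset$ for $p\nmid x$; your convention $B\supseteq A\setminus A_1$ is exactly the repair the paper's loose definition of $u(B)$ needs. Your final step checking (\ref{formub1}) is not required, since the statement only involves the cardinalities $|u(B)|$; if you keep it, note that positions run up to $n_1$, not $n$, when $k=8$ and for the $b_1=4$ family when $k=6$.
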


\vspace{1em}
\begin{remark}
The above theorem is analogous to the famous Legendre's formula:
\begin{equation}\label{legeqn}
\pi(x)-\pi(\sqrt{x})=\sum\limits_{\Div{d}{m(x)}} \mu(d) \left\lfloor \frac{x}{d}\right\rfloor\, -1,
\end{equation}
where $m(x)=\prod\limits_{p\leq \sqrt{x}} p$ is the product of all primes less than or equal to $\sqrt{x}$ and $\mu(d)$ is the M$\ddot{\text{o}}$bius function defined by $\mu(1)=1$, $\mu(d)=1$ if $d$ is an even number of product of distinct primes, $\mu(d)=-1$ if $d$ is an odd number of product of distinct primes and it is $0$ otherwise. Note that the formula (\ref{legeqn}) can be obtained from (\ref{eqnpix}) using a similar concept that we developed for counting $f(n)$.
\end{remark}

\section{Further Approximation}\label{secfa}

\noindent
From studies in \S \ref{sec12n4} and \S \ref{seckother}, we see that, for large $x$ we may approximate the degree of $x$ in the near Goldbach graph $G(x/2)$ in the following compact form (throughout this section $p$ stands for prime numbers only):

\begin{equation}\label{etacx1}
\eta_c(x) = \left\{%
\begin{array}{rl}
2\,\lfloor\frac{x}{12}\rfloor \prod\limits_{\Div{p}{x},\, 3<p<\sqrt{x}} \left( 1-\frac{1}{p}\right)\ \prod\limits_{\nDiv{p}{x},\, 3<p< \sqrt{x}} \left( 1-\frac{2}{p}\right) & \text{if } \Div{3}{x}\\
\lfloor\frac{x}{12}\rfloor \prod\limits_{\Div{p}{x},\, 3<p<\sqrt{x}} \left( 1-\frac{1}{p}\right)\ \prod\limits_{\nDiv{p}{x},\, 3<p< \sqrt{x}} \left( 1-\frac{2}{p}\right) & \text{if } \nDiv{3}{x}.\\
\end{array}\right.
\end{equation}

\vspace{1em}
\noindent
Now suppose\ $\Div{3}{x}$. Then $\prod\limits_{\Div{p}{x},\, 2<p<\sqrt{x}} \left( 1-\frac{1}{p}\right)$ $= (1-\frac{1}{3})\, \prod\limits_{\Div{p}{x},\, 3<p<\sqrt{x}} \left( 1-\frac{1}{p}\right)$ $=\frac{2}{3}\, \prod\limits_{\Div{p}{x},\, 3<p<\sqrt{x}} \left( 1-\frac{1}{p}\right)$. Thus $2\, \prod\limits_{\Div{p}{x},\, 3<p<\sqrt{x}} \left( 1-\frac{1}{p}\right)$ $=3\, \prod\limits_{\Div{p}{x},\, 2<p<\sqrt{x}} \left( 1-\frac{1}{p}\right)$ and $\prod\limits_{\nDiv{p}{x},\, 3<p< \sqrt{x}} \left( 1-\frac{2}{p}\right)$ $=\prod\limits_{\nDiv{p}{x},\, 2<p< \sqrt{x}} \left( 1-\frac{2}{p}\right)$ as $\Div{3}{x}$.

\vspace{1em}
\noindent
On the other hand, if $\nDiv{3}{x}$, then \\
$\prod\limits_{\nDiv{p}{x},\, 2<p< \sqrt{x}} \left( 1-\frac{2}{p}\right)$ $=(1-\frac{2}{3})\, \prod\limits_{\nDiv{p}{x},\, 3<p< \sqrt{x}} \left( 1-\frac{2}{p}\right)$ $=\frac{1}{3}\, \prod\limits_{\nDiv{p}{x},\, 3<p< \sqrt{x}} \left( 1-\frac{2}{p}\right)$. Thus $\prod\limits_{\nDiv{p}{x},\, 3<p< \sqrt{x}} \left( 1-\frac{2}{p}\right)$ $=3\,\prod\limits_{\nDiv{p}{x},\, 2<p< \sqrt{x}} \left( 1-\frac{2}{p}\right)$. Also $\prod\limits_{\Div{p}{x},\, 3<p<\sqrt{x}} \left( 1-\frac{1}{p}\right)$ $=\prod\limits_{\Div{p}{x},\, 2<p<\sqrt{x}} \left( 1-\frac{1}{p}\right)$ as $\nDiv{3}{x}$.

\vspace{1em}
\noindent
Therefore we can write the formula in the following single equation
\begin{equation}\label{etacx22}
\eta_c(x) = 3\, \lfloor\frac{x}{12}\rfloor \prod\limits_{\Div{p}{x},\, 2<p<\sqrt{x}} \left( 1-\frac{1}{p}\right)\ \prod\limits_{\nDiv{p}{x},\, 2<p< \sqrt{x}} \left( 1-\frac{2}{p}\right).
\end{equation}

\vspace{1em}
\noindent
Again \\ 
{\footnotesize $\prod\limits_{\Div{p}{x},\, 2<p<\sqrt{x}} \left(1-\frac{1}{p}\right)\ \prod\limits_{\nDiv{p}{x},\, 2<p<\sqrt{x}} \left(1-\frac{2}{p}\right)=\prod\limits_{\Div{p}{x},\, 2<p<\sqrt{x}} \frac{1-\frac{1}{p}}{1-\frac{2}{p}}\ \prod\limits_{2<p<\sqrt{x}} \left(1-\frac{2}{p}\right)=\prod\limits_{\Div{p}{x},\, 2<p<\sqrt{x}} \frac{p-1}{p-2}\ \prod\limits_{2<p<\sqrt{x}} \left(1-\frac{2}{p}\right)$.}

\vspace{1em}
\noindent
Thus we have 

\begin{equation}\label{etacx23}
\eta_c(x) = 3\, \lfloor\frac{x}{12}\rfloor \prod\limits_{\Div{p}{x},\, 2<p<\sqrt{x}} \frac{p-1}{p-2}\ \prod\limits_{2<p< \sqrt{x}} \left( 1-\frac{2}{p}\right).
\end{equation}

\noindent
In the following we wish to find a more compact closed form of $\eta_c(x)$. We begin with stating the famous Mertens' theorem:

\begin{theorem}\label{thmmert} {\em \cite{HWR, MERT}}
$$\prod\limits_{p\leq x} \left( 1-\frac{1}{p}\right)\sim \frac{e^{-\gamma}}{\log\, x},$$
where $\gamma = 0.5772156649$ is known as Euler's constant.
\end{theorem}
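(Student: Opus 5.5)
We would prove this classical result (Mertens' third theorem) by taking logarithms and reducing it to the asymptotic for $\sum_{p\le x}1/p$, with the only real difficulty being the identification of the constant as $e^{-\gamma}$. Writing
$$\log\prod_{p\le x}\Bigl(1-\frac1p\Bigr)=-\sum_{p\le x}\frac1p+\sum_{p\le x}\Bigl(\log\Bigl(1-\frac1p\Bigr)+\frac1p\Bigr),$$
the second sum converges absolutely as $x\to\infty$, since its terms are $O(1/p^2)$. Its tail beyond $x$ is $O(1/x)$. So it suffices to know $\sum_{p\le x}1/p$ up to $o(1)$.

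For the first step we would establish Mertens' first theorem, $\sum_{p\le x}\frac{\log p}{p}=\log x+O(1)$. This follows from Legendre's formula for the exponent of $p$ in $\lfloor x\rfloor!$, from Stirling's estimate $\log \lfloor x\rfloor!=x\log x-x+O(\log x)$, and from the Chebyshev bound $\theta(x)=O(x)$. Partial summation then gives
$$\sum_{p\le x}\frac1p=\log\log x+M+O\Bigl(\frac1{\log x}\Bigr)$$
for some constant $M$. Combining the two displays yields
$$\prod_{p\le x}\Bigl(1-\frac1p\Bigr)=\frac{e^{-C}}{\log x}\bigl(1+o(1)\bigr),\qquad C=M+\sum_p\Bigl(\log\Bigl(1-\frac1p\Bigr)+\frac1p\Bigr).$$
So the product is asymptotic to a constant divided by $\log x$, and it remains only to show $C=\gamma$.

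The main obstacle is this identification of the constant. We would compare two expressions for $\log\zeta(s)$ as $s\to1^+$. First, by the Euler product,
$$\log\zeta(s)=-\sum_p\log(1-p^{-s})=\sum_p p^{-s}+\sum_p\bigl(-\log(1-p^{-s})-p^{-s}\bigr).$$
The second sum tends to $-\sum_p(\log(1-1/p)+1/p)$ as $s\to1^+$, by dominated convergence.

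Next we would write $\sum_p p^{-s}=\int_{2^-}^\infty u^{1-s}\,dS(u)$, where $S(u)=\sum_{p\le u}1/p=\log\log u+M+R(u)$ and $R(u)=O(1/\log u)$. Integrating by parts, the main term contributes $\int_2^\infty u^{1-s}\,\frac{du}{u\log u}$. Substituting $u=e^{t}$ turns this into an exponential integral, equal to $-\log(s-1)-\gamma+o(1)$; here $\gamma$ enters through $\int_0^\infty e^{-t}\log t\,dt=-\gamma$. The remainder $R$ contributes $M+o(1)$. This uses that $R(u)\to0$ and that the weights $u^{1-s}$ tend to $1$ uniformly on compact sets, so the limit can be taken under the integral.

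Hence $\log\zeta(s)=-\log(s-1)-\gamma+C+o(1)$. On the other hand, $\zeta(s)=\frac1{s-1}+O(1)$, so $\log\zeta(s)=-\log(s-1)+o(1)$. Comparing the two gives $C=\gamma$, and the theorem follows.
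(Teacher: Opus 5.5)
The paper gives no proof of this statement: it is quoted from Hardy--Wright and Mertens, and the paper later uses the companion identity $B_1=\gamma+\sum_p\{\log(1-\frac1p)+\frac1p\}$ from the same source. Your outline is the classical argument, and essentially how the cited reference proceeds: Mertens' first and second theorems, then identifying the constant by comparing $\sum_p p^{-s}$ with $\log\zeta(s)$ as $s\to1^+$. The strategy is right. Since the whole content of the theorem is the constant, two slips in your constant bookkeeping need fixing.

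\textbf{A sign error in $C$.} Put $\Sigma=\sum_p(\log(1-\frac1p)+\frac1p)$. Your first display then gives $\log\prod_{p\le x}(1-\frac1p)=-\log\log x-(M-\Sigma)+o(1)$. So $C=M-\Sigma$, not $M+\Sigma$. Your zeta computation actually yields $\log\zeta(s)=-\log(s-1)-\gamma+M-\Sigma+o(1)$, so your conclusion matches the corrected $C$. Equivalently $M=\gamma+\Sigma$, which is the form the paper quotes.

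\textbf{The partial-summation split is off by $\log\log 2$.} Taken literally,
$$\int_2^\infty u^{1-s}\frac{du}{u\log u}=\int_{(s-1)\log 2}^\infty e^{-v}\frac{dv}{v}=-\log(s-1)-\gamma-\log\log 2+o(1).$$
The compensating $+\log\log 2$ sits in the boundary contribution at $u=2$: the jump $S(2)=\frac12$ together with $R(2)=\frac12-\log\log 2-M$. So your claim that $R$ contributes $M$ is off by the same amount, and the two errors happen to cancel. Separately, uniform convergence of $u^{1-s}$ on compacts does not let you take the limit in $\int u^{1-s}\,dR(u)$, because $dR$ has infinite total variation.

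\textbf{The clean fix.} Apply Abel summation to all of $S$ first:
$$\sum_p p^{-s}=(s-1)\int_2^\infty S(u)u^{-s}\,du.$$
Then split $S=\log\log u+M+R$ and substitute $u=e^{v/(s-1)}$.
\begin{itemize}
\item The $\log\log u$ part equals $\int_{(s-1)\log 2}^\infty e^{-v}\log v\,dv-2^{1-s}\log(s-1)=-\gamma-\log(s-1)+o(1)$.
\item The constant part is $M\,2^{1-s}\to M$.
\item The $R$ part is $o(1)$. The measure $(s-1)u^{-s}\,du$ on $[2,\infty)$ has total mass $2^{1-s}\le 1$, its mass on each fixed $[2,U]$ tends to $0$, and $R(u)\to 0$.
\end{itemize}
With these two corrections your proof is complete.
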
 

\noindent
Now we wish to prove the following:

\begin{theorem}\label{2bypform}
$$\prod\limits_{2<p\leq x} \left( 1-\frac{2}{p}\right)\sim \frac{e^{-\beta}}{(\log\, x)^2}, \text{ where }\beta =0.183407.$$
\end{theorem}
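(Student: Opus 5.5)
Our plan is to reduce the statement to Mertens' theorem (Theorem \ref{thmmert}) by comparing each factor $1-\frac{2}{p}$ with $\left(1-\frac{1}{p}\right)^2$. For every prime $p>2$ we have
$$1-\frac{2}{p}=\left(1-\frac{1}{p}\right)^2\left(1-\frac{1}{(p-1)^2}\right),$$
since $(p-1)^2-1=p(p-2)$. Hence
$$\prod\limits_{2<p\leq x}\left(1-\frac{2}{p}\right)=\left(\prod\limits_{2<p\leq x}\left(1-\frac{1}{p}\right)\right)^2\prod\limits_{2<p\leq x}\left(1-\frac{1}{(p-1)^2}\right).$$

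The first factor on the right is handled by Theorem \ref{thmmert}. Removing the factor at $p=2$ gives
$$\prod_{2<p\leq x}\left(1-\frac1p\right)=2\prod_{p\leq x}\left(1-\frac1p\right)\sim \frac{2e^{-\gamma}}{\log x}.$$
Squaring this, the first factor is asymptotic to $4e^{-2\gamma}/(\log x)^2$.

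For the second factor, note that $\sum_{p>2}\frac{1}{(p-1)^2}\leq\sum_{m\geq 2}\frac{1}{m^2}<\infty$. Also every factor $1-\frac{1}{(p-1)^2}$ lies in $[3/4,1)$ for $p\geq 3$. So the infinite product $C_2=\prod_{p>2}\left(1-\frac{1}{(p-1)^2}\right)$ converges to a positive limit, and the partial products tend to $C_2$ as $x\to\infty$. This $C_2$ is the twin prime constant, $C_2\approx 0.6601618$.

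Combining the two factors,
$$\prod_{2<p\leq x}\left(1-\frac{2}{p}\right)\sim\frac{4C_2e^{-2\gamma}}{(\log x)^2}.$$
We therefore take
$$\beta=-\log\left(4C_2e^{-2\gamma}\right)=2\gamma-\log 4-\log C_2.$$
To check the numerics, $2\gamma\approx1.154431$, $\log4\approx1.386294$ and $-\log C_2\approx0.415269$. These give $\beta\approx 0.183406$, which matches the stated $0.183407$ up to rounding.

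The main obstacle is not the asymptotic argument, which is a short algebraic identity plus absolute convergence. It is justifying the numerical value of $C_2$ to six or more digits. We would either cite the known value of the twin prime constant, or compute it by evaluating the partial product up to a moderate bound $N$ and bounding the tail. For the tail, use $\sum_{p>N}\frac{1}{(p-1)^2}<\frac{1}{N-2}$, which perturbs $\log C_2$ by at most about $\frac{1}{N-2}$. In practice one quotes the classical value $C_2=0.6601618158\ldots$, which fixes $\beta$ to the stated precision.
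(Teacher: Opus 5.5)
Your proof is correct, but it follows a different route from the paper's. The paper takes logarithms and expands $\log(1-2/p)=-\sum_{i\ge1}(2/p)^i/i$. It handles the linear term with Mertens' second theorem $\sum_{p\le x}1/p=\log\log x+B_1+o(1)$, where $B_1\approx 0.2614972$ is the Meissel--Mertens constant. The higher-order terms are collected into the convergent series of prime zeta values $R=\sum_{i\ge2}\frac{2^i}{i}\bigl(P(i)-2^{-i}\bigr)$, which the paper evaluates numerically as $R\approx 0.660413$. This gives $\beta=2B_1+R-1$.

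You instead factor $1-\frac2p=(1-\frac1p)^2(1-\frac1{(p-1)^2})$. That reduces everything to Mertens' product theorem (already quoted as Theorem~\ref{thmmert}) plus an absolutely convergent product. Your route identifies the constant in closed form, $e^{-\beta}=4C_2e^{-2\gamma}$, and avoids both the double-series interchange and the numerical evaluation of $R$. The two constants agree identically: the $p=2$ term of $B_1$ is $\log\frac12+\frac12$, and from this one checks $1-2B_1-R=\log 4+\log C_2-2\gamma$.

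What the paper's route buys is independence from $C_2$. The paper later uses this theorem in its Corollary to \emph{deduce} $\prod_{p>2}\bigl(1-\frac1{(p-1)^2}\bigr)=e^{2\gamma-\beta}/4\approx 0.660162$. Under your proof, that deduction is just your defining identity read backwards, so it carries no independent numerical content. Evaluating $R$ numerically is, however, about as much work as evaluating $C_2$ directly.

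One small arithmetic point: $-\log C_2=0.4152703\ldots$, not $0.415269$. Your formula therefore gives $\beta=0.1834073\ldots$, which matches the stated value to all six digits rather than only up to rounding.
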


\begin{proof}
Let $z=\prod\limits_{2<p\leq x} \left( 1-\frac{2}{p}\right)$ and $L=\log z$. Then

$${L=\sum\limits_{2<p\leq x} \log\, \left( 1-\frac{2}{p}\right)}.$$

\noindent
We know that
$$\log\, (1-u) = -\left( u + \frac{u^2}{2} + \frac{u^3}{3} + \frac{u^4}{4} +\cdots\right)\ \text{for } |u|<1.$$
So we take $u=\frac{2}{p}$ and examine the terms $\displaystyle{L_i=\sum\limits_{2<p\leq x} \frac{(2/p)^i}{i}}$ for $i\in\Nat$

\vspace{1em}
\noindent
Now
$$L_1=\sum\limits_{2<p\leq x} \frac{2}{p} = 2 \sum\limits_{2<p\leq x} \frac{1}{p}.$$

\noindent
We know that 
$$\sum\limits_{p\leq x} \frac{1}{p} = \log\log\, x+B_1+o(1)$$
where $B_1=\gamma +\sum\limits_{p\leq x} \left\{\log\left(1-\frac{1}{p}\right) + \frac{1}{p}\right\}$. The series $B_1$ converges and equals to $0.2614972128$ (approximately) \cite{HWR}. Thus we have 
$$L_1= 2 \left(\log\log\, x + B_1 - \frac{1}{2}\right) = 2\log\log\, x+2B_1-1.$$

\noindent
Next we consider for $i>1$
$$L_i=\sum\limits_{2<p\leq x} \frac{(2/p)^i}{i}.$$
We have $$\sum\limits_{i=2}^{\infty} L_i = \sum\limits_{i=2}^{\infty} \sum\limits_{2<p\leq x} \frac{(2/p)^i}{i} = \sum\limits_{2<p\leq x} \sum\limits_{i=2}^{\infty} \frac{(2/p)^i}{i}.$$

\noindent
Now the series $\sum\limits_{i=2}^{\infty} \frac{(2/p)^i}{i}$ is convergent for every prime $p>2$. Since the other sum is finite the whole series $\sum\limits_{i=2}^{\infty} L_i$ is convergent. In the following let us compute the sum. We know that the {\em prime zeta function} 
$$P(s)= \sum\limits_{p\,\in\, primes} \frac{1}{p^s}$$
is convergent for all $s\geq 2$. Now
$$\sum\limits_{i=2}^{\infty} \sum\limits_{2<p\leq x} \frac{(2/p)^i}{i} = \sum\limits_{i=2}^{\infty} \frac{2^i}{i}\left\{\sum\limits_{p>2} \frac{1}{p^i}-\sum\limits_{p>x} \frac{1}{p^i}\right\} = \sum\limits_{i=2}^{\infty} \frac{2^i}{i}\left\{\left(P(i)-\frac{1}{2^i}\right)-\sum\limits_{p>x} \frac{1}{p^i}\right\}.$$
We note that $\sum\limits_{p>x} \frac{1}{p^i}\rightarrow 0$ as $x\rightarrow \infty$. Also for large $x$ this term becomes insignificant. We define 
$$R:=\sum\limits_{i=2}^{\infty} \frac{2^i}{i}\left(P(i)-\frac{1}{2^i}\right).$$
Then $$L=\log\, z=- 2\log\log\, x-2B_1-R+1+o(1).$$
This implies 
$$z\sim \frac{e^{1-2B_1-R}}{(\log\, x)^2}.$$
Finally, to compute $R$, we note that $L_i$ becomes insignificant for large $i$ relative to the sum of its first few small values. In fact, the function $\displaystyle{\sum\limits_{i=2}^n \frac{2^i}{i} \left(P(i)-\frac{1}{2^i}\right)}$ approximately equals to $0.660413$ for $30\leq n\leq 50$. Thus we may take $R\approx 0.660413$. Then $1-2B_1-R=1-2\times 0.261497-0.660413=-0.183407$ as required.
\end{proof}

\noindent 
Table \ref{tasymp} clearly shows the asymptotic nature proved in the above theorem.

\begin{table}[ht]
{\footnotesize $$\begin{array}{|c|l|l|l|l|l|}
\hline
x & 10 & 10^2 & 10^3 & 10^4 & 10^5 \\ 
\hline
z & 0.142857 & 0.038297 & 0.0173126 & 0.00978883 & 0.00627642\\ 
\hline
\frac{e^{-\beta}}{(\log\, x)^2} & 0.157006 & 0.0392515 & 0.0174451 & 0.00981287 & 0.00628024 \\
\hline
\frac{e^{-\beta}/(\log\, x)^2}{z} & 1.0990431 & 1.02492362 & 1.0076534 & 1.0024559 & 1.0006609 \\
\hline
\hline
x & 10^6 & 10^7 & 10^8 & 10^9 & 10^{10}\\
\hline
z & 0.00436093 & 0.00320414 & 0.0024532 &  0.001938338 & 0.0015700567 \\
\hline
\frac{e^{-\beta}}{(\log\, x)^2} &  0.00436128 & 0.0032042 & 0.00245322 & 0.001938344 & 0.00157006\\
\hline
\frac{e^{-\beta}/(\log\, x)^2}{z} & 
 1.0000802 & 1.0000187 & 1.0000082 & 1.0000031 & 1.0000021\\
\hline
\end{array}$$}
\caption{Comparison between $z$ and $\frac{e^{-\beta}}{(\log\, x)^2}$.}\label{tasymp}
\end{table}

\vspace{1em}
\begin{theorem}\label{thmetaa}
For large positive even integer $x$, the degree of $x$ in the near Goldbach graph $G(x/2)$ is approximately given by
$$\eta(x)=\kappa(x) \frac{x\, e^{-\beta}}{(\log\, x)^2}.$$
where $\beta=0.183407$, $\displaystyle{\kappa(x)=\prod\limits_{\Div{p}{x},\, p>2} \frac{p-1}{p-2}}$.
\end{theorem}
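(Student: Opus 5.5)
The plan is to start from the compact heuristic form (\ref{etacx23}) of $\eta_c(x)$ and substitute the asymptotic of Theorem \ref{2bypform}. The statement is heuristic ("approximately given by"), so what has to be shown is that $\eta_c(x)$, which was argued in \S\ref{sec12n4}--\S\ref{seckother} (via the near-independence of the divisibility events and the approximations $pr_3(p)$) to approximate the degree of $x$ in $G(x/2)$, is asymptotically equal to $\eta(x)$.

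The first step handles the prefactor. Since $\lfloor x/12\rfloor = x/12 + O(1)$, we have $3\lfloor x/12\rfloor \sim x/4$.

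The second step replaces $\prod_{2<p<\sqrt{x}}(1-2/p)$ using Theorem \ref{2bypform} with $\sqrt{x}$ in place of $x$. Because $\log\sqrt{x}=\tfrac12\log x$, this product is
$$\sim \frac{e^{-\beta}}{(\tfrac12\log x)^2}=\frac{4e^{-\beta}}{(\log x)^2}.$$
The factor $4$ cancels exactly against the $1/4$ from the first step. This gives
$$\eta_c(x)\sim \prod_{\Div{p}{x},\,2<p<\sqrt{x}}\frac{p-1}{p-2}\,\cdot\frac{x\,e^{-\beta}}{(\log x)^2}.$$
The boundary difference between $p\le\sqrt{x}$ and $p<\sqrt{x}$ is irrelevant: it changes the product by at most a factor $1+O(1/\sqrt{x})$.

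The third step extends the product defining $\kappa$ from $2<p<\sqrt{x}$ to all odd primes dividing $x$. An even integer $x$ has at most one prime divisor $p>\sqrt{x}$, since two such primes would have product exceeding $x$. That missing factor is $\frac{p-1}{p-2}=1+\frac{1}{p-2}=1+O(x^{-1/2})$, so it tends to $1$. The truncated product therefore agrees with $\kappa(x)$ up to a factor $1+o(1)$, and $\eta_c(x)\sim\eta(x)$.

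I expect the main obstacle to be the justification that $\eta_c(x)$ genuinely approximates the degree. This is the independence/sieve heuristic, and it cannot be proved rigorously with the tools at hand. Note in particular that the Mertens-type constant in a sieve up to $\sqrt{x}$ is known to differ from the "independent" prediction. I will therefore present this step as resting on the approximations of \S3--\S4 (Tables \ref{tabtnetan}, \ref{tabtnetaapn}) and on the case analysis leading to (\ref{etacx22}), and make only the passage $\eta_c(x)\sim\eta(x)$ rigorous.
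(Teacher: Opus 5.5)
Your proposal is correct and follows the paper's proof essentially step for step. You start from (\ref{etacx23}), apply Theorem \ref{2bypform} at $\sqrt{x}$ to obtain $4e^{-\beta}/(\log x)^2$, cancel this against $3\lfloor x/12\rfloor\sim x/4$, and replace the truncated product by $\kappa(x)$ because $x$ has at most one prime factor exceeding $\sqrt{x}$. The only cosmetic difference is that the paper notes $\prod_{2<p<\sqrt{x}}(1-2/p)=\prod_{2<p\le\sqrt{x}}(1-2/p)$ exactly, since an even $x$ is never the square of an odd prime. Like you, the paper leaves ``$\eta_c(x)$ approximates the degree'' as a heuristic; its later Corollary and Remark~\ref{rem:HL} in fact attribute a factor $4e^{-2\gamma}\approx 1.26$ gap relative to Hardy--Littlewood to exactly this independence assumption.
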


\begin{proof}
We have, by (\ref{etacx23}), the required degree is approximately
$$3\, \lfloor\frac{x}{12}\rfloor \prod\limits_{\Div{p}{x},\, 2<p<\sqrt{x}} \frac{p-1}{p-2}\ \prod\limits_{2<p< \sqrt{x}} \left( 1-\frac{2}{p}\right).$$
We first note that, 
$$\prod\limits_{\Div{p}{x},\, p>2} \frac{p-1}{p-2} = \prod\limits_{\Div{p}{x},\, 2<p<x} \frac{p-1}{p-2} = \prod\limits_{\Div{p}{x},\, 2<p<\sqrt{x}} \frac{p-1}{p-2}\, \prod\limits_{\Div{p}{x},\, \sqrt{x}<p<x} \frac{p-1}{p-2}.$$

\noindent
It is interesting to note that if there is a prime $p$ such that $\Div{p}{x}$ and $p>\sqrt{x}$, then $\frac{x}{p}<\frac{x}{\sqrt{x}}=\sqrt{x}$. Thus for any $x$, there can be at most one prime factor $p$ of $x$ such that $p>\sqrt{x}$. So for large $x$, $\prod\limits_{\Div{p}{x},\, \sqrt{x}<p<x} \frac{p-1}{p-2}\sim 1$. Thus $\prod\limits_{\Div{p}{x},\, 2<p<\sqrt{x}} \frac{p-1}{p-2}\sim \prod\limits_{\Div{p}{x},\, p>2} \frac{p-1}{p-2}=\kappa(x)$.

\vspace{1em}
\noindent
Now by Theorem \ref{2bypform}, we have $\prod\limits_{2<p\leq x} \left( 1-\frac{2}{p}\right)\sim \frac{e^{-\beta}}{(\log\, x)^2}$. Then $\prod\limits_{2<p< \sqrt{x}} \left( 1-\frac{2}{p}\right)$ $=\prod\limits_{2<p\leq \sqrt{x}} \left( 1-\frac{2}{p}\right)$ (as $p$ is odd prime and $x$ is even) $=\frac{e^{-\beta}}{(\log\, \sqrt{x})^2}$ $=\frac{4e^{-\beta}}{(\log\, x)^2}$. Thus the required degree is (approximately) $\kappa(x) \frac{x\, e^{-\beta}}{(\log\, x)^2}$ as required.
\end{proof}

\noindent
Table \ref{tdegcx} shows the comparison between exact values and approximated values of degrees of vertices (even integers) $x$ in the near Goldbach graph $G(x/2)$:

\begin{table}[ht]
$$\begin{array}{|c|l|l|l|l|l|l|l|l|l|}
\hline
x & 10^2 & 10^3 & 10^4 & 10^5 & 10^6 & 10^7 & 10^8 & 10^9 & 10^{10} \\
\hline
\text{deg}(x) & 6 & 28 & 127 & 810 & 5402 & 38807 & 291400 & 2274205 & 18200488 \\
\hline
\eta_c(x) & 4 & 20 & 127 & 820 & 5770 & 42642 & 326294 & 2582599 & 20921398 \\
\hline
\eta(x) & 5 & 23 & 130 & 837 & 5815 & 42722 & 327095 & 2584459 & 20934120\\
\hline
\end{array}$$
\caption{Comparison between exact values and approximated values of degree of $x$ in $G(x/2)$.}\label{tdegcx}
\end{table}

\noindent
It is known that if $D(x)$ is the number of ways that $x$ is represented as the sum of two odd primes (where $p+q$ and $q+p$ are considered as different representations), then the following is a conjecture by Hardy and Littlewood \cite{HDLW, Wu}:
$$D(x)\sim 2 \frac{C(x) x}{(\log\, x)^2},\ \text{where } C(x)=\prod\limits_{\Div{p}{x},\, p>2} \frac{p-1}{p-2}\ \prod\limits_{p>2} \left( 1-\frac{1}{(p-1)^2}\right).$$

\noindent
Now $\prod\limits_{p>2} \left( 1-\frac{1}{(p-1)^2}\right)$ is convergent and is approximately equal to $0.660162$. Thus 
$$C(x)=0.660162\, \prod\limits_{\Div{p}{x},\, p>2} \frac{p-1}{p-2}=0.660162\ \kappa(x).$$
Thus in terms of $D(x)$ we observe that
$$\eta(x)=\frac{e^{-\beta}}{0.660162}\,\frac{D(x)}{2} = 1.26095\,\frac{D(x)}{2}.$$

\noindent
In the following, we prove this result formally.

\begin{corollary}
$$\eta(x)= 4e^{-2\gamma}\,\frac{D(x)}{2}=1.26095\,\frac{D(x)}{2}=O(D(x)).$$
\end{corollary}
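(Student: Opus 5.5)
The plan is to identify the constant $e^{-\beta}$ of Theorem \ref{2bypform} exactly, rather than numerically, as $4e^{-2\gamma}C_2$, where $C_2=\prod_{p>2}\left(1-\frac{1}{(p-1)^2}\right)$ is the twin prime constant appearing in $C(x)$. Once that identity is available, the corollary follows by comparing $\eta(x)$ from Theorem \ref{thmetaa} with the Hardy--Littlewood expression for $D(x)$. The equality in the corollary will therefore be read asymptotically ($\sim$), and it is conditional on the Hardy--Littlewood conjecture $D(x)\sim 2C(x)x/(\log x)^2$.

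First I will use the elementary factorization, valid for every odd prime $p$:
$$1-\frac{2}{p}=\left(1-\frac{1}{p}\right)^2\cdot\frac{p(p-2)}{(p-1)^2}=\left(1-\frac{1}{p}\right)^2\left(1-\frac{1}{(p-1)^2}\right).$$
Taking the product over $2<p\le y$ gives
$$\prod_{2<p\le y}\left(1-\frac{2}{p}\right)=\left(\prod_{2<p\le y}\left(1-\frac1p\right)\right)^2\prod_{2<p\le y}\left(1-\frac{1}{(p-1)^2}\right).$$
The second product converges to $C_2$ because $\sum 1/(p-1)^2<\infty$. For the first product, Mertens' theorem (Theorem \ref{thmmert}) gives
$$\prod_{2<p\le y}\left(1-\frac1p\right)=2\prod_{p\le y}\left(1-\frac1p\right)\sim\frac{2e^{-\gamma}}{\log y}.$$
Hence $\prod_{2<p\le y}(1-2/p)\sim 4e^{-2\gamma}C_2/(\log y)^2$.

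Comparing this with Theorem \ref{2bypform} and using uniqueness of the constant in an asymptotic equivalence, I get $e^{-\beta}=4e^{-2\gamma}C_2$. A numerical check confirms it: $4e^{-2\gamma}\approx 1.26095$ and $C_2\approx 0.660162$, and $e^{-0.183407}$ agrees with their product.

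I then substitute into Theorem \ref{thmetaa}. Since $C(x)=C_2\,\kappa(x)$,
$$\eta(x)=\kappa(x)\frac{xe^{-\beta}}{(\log x)^2}=4e^{-2\gamma}\,\frac{C(x)\,x}{(\log x)^2}\sim 4e^{-2\gamma}\,\frac{D(x)}{2}.$$
The last step uses the Hardy--Littlewood asymptotic. The bound $\eta(x)=O(D(x))$ is then immediate, since $4e^{-2\gamma}$ is an absolute constant.

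The main obstacle is the status of $\beta$ and the nature of the claimed equality, not any computation. In Theorem \ref{2bypform}, $\beta$ was obtained only numerically through $B_1$ and $R$. I will sidestep this by defining $e^{-\beta}$ as the limit of $(\log y)^2\prod_{2<p\le y}(1-2/p)$ and using the factorization above to evaluate it in closed form. This also shows that $1-2B_1-R=\log 4-2\gamma+\log C_2$ exactly. Separately, the statement $\eta(x)=4e^{-2\gamma}D(x)/2$ cannot be a literal identity: it holds only as an asymptotic equivalence, and only under the Hardy--Littlewood conjecture, so I will state explicitly that this is the intended reading.
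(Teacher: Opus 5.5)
Your proposal is correct and takes essentially the same route as the paper. Both use the factorization $1-\frac{2}{p}=\left(1-\frac1p\right)^2\left(1-\frac{1}{(p-1)^2}\right)$, combine Mertens' theorem with Theorem \ref{2bypform} to identify $e^{-\beta}=4e^{-2\gamma}\prod_{p>2}\left(1-\frac{1}{(p-1)^2}\right)$, and substitute $C(x)=\kappa(x)\prod_{p>2}\left(1-\frac{1}{(p-1)^2}\right)$ into $\eta(x)$. Your caveat that the final equality is only an asymptotic equivalence, conditional on the Hardy--Littlewood conjecture, makes explicit what the paper's proof uses silently.
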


\begin{proof}

\noindent
First we notice that
$$1-\frac{1}{(p-1)^2}\ =\ \frac{p(p-2)}{(p-1)^2}\ =\ \frac{\left(1-\frac{2}{p}\right)}{\left(1-\frac{1}{p}\right)^2}.$$

\noindent
Thus
$$\prod\limits_{2<p<\sqrt{x}} \left(1-\frac{1}{(p-1)^2}\right)=\frac{\prod\limits_{2<p<\sqrt{x}} \left(1-\frac{2}{p}\right)}{\prod\limits_{2<p<\sqrt{x}}\left(1-\frac{1}{p}\right)^2}=\frac{\prod\limits_{2<p<\sqrt{x}} \left(1-\frac{2}{p}\right)}{4\,\prod\limits_{p<\sqrt{x}}\left(1-\frac{1}{p}\right)^2}\sim \frac{\frac{e^{-\beta}}{(\log\,\sqrt{x})^2}}{4\,\frac{e^{-2\gamma}}{(\log\,\sqrt{x})^2}}= \frac{e^{2\gamma-\beta}}{4}$$
for large value of $x$, by Theorems \ref{thmmert} and \ref{2bypform}.

\noindent
Now 
$$\prod\limits_{p>2} \left(1-\frac{1}{(p-1)^2}\right) = \prod\limits_{2<p<\sqrt{x}} \left(1-\frac{1}{(p-1)^2}\right) \prod\limits_{p>\sqrt{x}} \left(1-\frac{1}{(p-1)^2}\right)$$

\noindent
and $\prod\limits_{p>\sqrt{x}} \left(1-\frac{1}{(p-1)^2}\right)\rightarrow 1$ as $x\rightarrow\infty$. Thus we have 
$$\prod\limits_{p>2} \left(1-\frac{1}{(p-1)^2}\right) = \frac{e^{2\gamma-\beta}}{4}=0.660162.$$

\noindent
Thus we have $C(x)=\frac{e^{2\gamma-\beta}}{4}\,\kappa(x)$. Therefore
$$\eta(x)=\kappa(x) \frac{x\, e^{-\beta}}{(\log\, x)^2}=4\, e^{-2\gamma}\, C(x)\,\frac{x}{(\log\, x)^2}=4\, e^{-2\gamma}\,\frac{D(x)}{2} =1.26095\, \frac{D(x)}{2}=O(D(x)).$$

\end{proof}

\begin{remark}\label{rem:HL}
The gap $4e^{-2\gamma}$ is not surprising and quite natural as in Formula (\ref{etacx1}) of $\eta_c(x)$ we assumed the events $A_p$ (see Definition \ref{defap}) are mutually independent for different primes $p$. But it is not so in reality. In fact, if we follow the distribution of primes as in Prime Number Theorem, then we will find the same factor. This is also mentioned by Hardy in his book \cite{HWR} while giving a justification of his conjecture on twin primes. Here we provide a similar justification which is worth noting. 

\vspace{1em}
\noindent
Let $x$ be an integer. Let $S=\set{1,2,3,\ldots,x}$. We wish to count $\pi(x)$, the number of primes less than or equal to $x$. Let $p$ be a prime number such that $2\leq p\leq\sqrt{x}$. Let $F(x,p)=\Set{y\in S}{p\mid y,\, y>p}$. Then $|F(x,p)|=\lfloor\frac{x-p}{p}\rfloor$. Therefore 
\begin{equation}\label{eqnpix}
\pi(x)=x-\left|\,\bigcup\limits_{p\leq \sqrt{x}} F(x,p)\,\right|-1.
\end{equation}
Let $A_p$ be the event that $y\in S$ satisfies $p\mid y$ and $y>p$ for a prime number $p\leq\sqrt{x}$. Then the probability that $A_p$ occurs is given by $P(A_p)=\frac{1}{x}\,\lfloor\frac{x-p}{p}\rfloor\approx\frac{1}{p}$ for large $x$. Let $$A=\bigcup\limits_{p\leq\sqrt{x}} A_p.$$
Now if we assume the events $A_p$ are mutually independent, then the probability of $A$ is given by
$$1-\prod\limits_{p\leq\sqrt{x}} \left( 1-\frac{1}{p}\right).$$
Suppose $M(x)$ be the approximated value of the number of primes less than or equal to $x$ considering the events $A_p$ are mutually independent. Then 
$$M(x) = x\,\prod\limits_{p\leq\sqrt{x}} \left( 1-\frac{1}{p}\right) \sim x\cdot\frac{e^{-\gamma}}{\log \sqrt{x}} \text{ (by Mertens' Theorem)}$$ 
$$= 2 e^{-\gamma}\cdot\frac{x}{\log x} \sim 2 e^{-\gamma}\pi(x) \text{ (by Prime Number Theorem)}.$$
Thus we have 
\begin{equation}\label{dcf}
\pi(x)\sim \frac{1}{2} e^\gamma\ M(x).
\end{equation}
We call the number $\frac{1}{2} e^\gamma$ as the {\em discrete correction factor}. Now by (\ref{etacx22}) we have the degree of an even integer $x$ in the near Goldbach graph $G(x/2)$, considering that divisibility events are mutually independent for different primes, is given by

\vspace{1em}
\noindent
$\displaystyle{\eta_c(x) = 3\, \lfloor\frac{x}{12}\rfloor \prod\limits_{\Div{p}{x},\, 2<p<\sqrt{x}} \left( 1-\frac{1}{p}\right)\ \prod\limits_{\nDiv{p}{x},\, 2<p< \sqrt{x}} \left( 1-\frac{2}{p}\right).}$\\[1em]
$\displaystyle{= 3\, \lfloor\frac{x}{12}\rfloor \ \frac{\prod\limits_{\Div{p}{x},\, 2<p<\sqrt{x}} \left( 1-\frac{1}{p}\right)}{\prod\limits_{\Div{p}{x},\, 2<p< \sqrt{x}} \left( 1-\frac{2}{p}\right)} \ \prod\limits_{2<p< \sqrt{x}} \left( 1-\frac{2}{p}\right).}$\\[1em]
$\displaystyle{ = 3\, \lfloor\frac{x}{12}\rfloor \ \prod\limits_{\Div{p}{x},\, 2<p<\sqrt{x}} \frac{p-1}{p-2}\  \frac{\prod\limits_{2<p< \sqrt{x}} \left( 1-\frac{2}{p}\right)}{\prod\limits_{2<p< \sqrt{x}} \left( 1-\frac{1}{p}\right)^2} \ \prod\limits_{2<p< \sqrt{x}} \left( 1-\frac{1}{p}\right)^2.}$\\[1em]
$\displaystyle{ = 3\, \lfloor\frac{x}{12}\rfloor \ \prod\limits_{\Div{p}{x},\, 2<p<\sqrt{x}} \frac{p-1}{p-2} \ \prod\limits_{2<p< \sqrt{x}} \frac{p(p-2)}{(p-1)^2}\  \prod\limits_{2<p< \sqrt{x}} \left( 1-\frac{1}{p}\right)^2.}$\\[1em]
$\displaystyle{ = 3\, \lfloor\frac{x}{12}\rfloor \ \prod\limits_{\Div{p}{x},\, 2<p<\sqrt{x}} \frac{p-1}{p-2} \ \prod\limits_{2<p< \sqrt{x}} \left( 1-\frac{1}{(p-1)^2}\right) \ \prod\limits_{2<p< \sqrt{x}} \left( 1-\frac{1}{p}\right)^2.}$\\[1em]
$\displaystyle{ = 3\, \lfloor\frac{x}{12}\rfloor \ \prod\limits_{\Div{p}{x},\, 2<p<\sqrt{x}} \frac{p-1}{p-2} \ \prod\limits_{2<p< \sqrt{x}} \left( 1-\frac{1}{(p-1)^2}\right) \cdot 4\prod\limits_{p< \sqrt{x}} \left( 1-\frac{1}{p}\right)^2.}$\\[1em]
$\displaystyle{ = 12\, \lfloor\frac{x}{12}\rfloor \ \prod\limits_{\Div{p}{x},\, 2<p<\sqrt{x}} \frac{p-1}{p-2}\  \prod\limits_{2<p< \sqrt{x}} \left( 1-\frac{1}{(p-1)^2}\right) \ \prod\limits_{p< \sqrt{x}} \left( 1-\frac{1}{p}\right)^2.}$

\vspace{1em}
\noindent
Now $\prod\limits_{\Div{p}{x},\, 2<p<\sqrt{x}} \frac{p-1}{p-2} = \prod\limits_{\Div{p}{x},\, p>2} \frac{p-1}{p-2} \prod\limits_{\Div{p}{x},\, p>\sqrt{x}} \frac{p-2}{p-1}\sim \prod\limits_{\Div{p}{x},\, p>2} \frac{p-1}{p-2}=\kappa(x)$\\[1em] 
since $\prod\limits_{\Div{p}{x},\, p>\sqrt{x}} \frac{p-2}{p-1}\rightarrow 1$ as $x\rightarrow\infty$. Note that there can be at most one such $p\mid x$ and $p>\sqrt{x}$ as then $\frac{x}{p}<\sqrt{x}$.

\vspace{1em}
\noindent
Next since $\prod\limits_{p>2} \left( 1-\frac{1}{(p-1)^2}\right)$ is convergent and equal to $0.660162=\Pi_2$ (say),\\[1em]
we have $\prod\limits_{2<p< \sqrt{x}} \left( 1-\frac{1}{(p-1)^2}\right)\sim \Pi_2$ for large $x$. 

\vspace{1em}
\noindent
Finally, following (\ref{dcf}), we multiply the term $\prod\limits_{p< \sqrt{x}} \left( 1-\frac{1}{p}\right)^2$ with the discrete correction factor and replace it by $\left(\frac{e^{-\gamma}}{\log \sqrt{x}}\right)^2 \left(\frac{1}{2} e^{\gamma}\right)^2$. 

\vspace{1em}
\noindent
Thus, the corrected value of $d(x)$, the degree of $x$ in the near Goldbach graph $G(x/2)$, is given by 
$$d(x)\sim x\, \kappa(x)\,\Pi_2\ \left(\frac{e^{-\gamma}}{\log \sqrt{x}}\right)^2 \left(\frac{1}{2} e^{\gamma}\right)^2=\frac{x}{(\log x)^2} \kappa(x)\, \Pi_2.$$
Now it follows from the definition of near Goldbach graphs that 
$$d(x)=\left\{%
\begin{array}{ll}
\frac{1}{2} D(x)+\frac{1}{2} & x-1 \text{ and }\frac{x}{2}\text{ are primes}.\\
\frac{1}{2} D(x)+1 & x-1 \text{ is prime and }\frac{x}{2}\text{ is not prime}.\\
\frac{1}{2} D(x)-\frac{1}{2} & x-1 \text{ is not prime and }\frac{x}{2}\text{ is prime}.\\
\frac{1}{2} D(x) & x-1 \text{ and }\frac{x}{2}\text{ are not primes}.\\
\end{array}\right.$$
Thus for an even integer $x$,
$$|d(x)- \frac{1}{2} D(x)|\leq 1.$$
Therefore we have
$$D(x)\sim 2\,\kappa(x)\,\Pi_2\ \frac{x}{(\log x)^2}.$$
Though this (heuristic) arguments justify the Hardy-Littlewood conjecture, this does not provide a proof for it as the discrete correction factor and its application on powers of $\left(1-\frac{1}{p}\right)$ (as in the last step) are not yet proved rigorously.
\end{remark}

\section{Sufficient conditions}

\noindent
In this section, our target is to show analytically $T(n)<1$ for all $n$. For this we are trying to find a function that is greater than $T(n)$ but less than $1$. In the following we wish to have a sufficient condition so that we can construct such a function.  

\vspace{1em}
\noindent
Let $n\in\Nat$, $n>3$, $x=12n+4$, $S(n)=\Set{12m+6}{m\in\Nat\cup\set{0},\ m<n}$, $r_x=|PR(x)|$, where $PR(x)=\Set{p}{p \text{ is prime},\ 3<p<\sqrt{x}}$. For each prime $p\in PR(x)$, let $A_{p}$ be the event which is the set of numbers $y$ in $S(n)$ such that either $\Div{p}{x+y}$ or $\Div{p}{x-y}$ (except the case where $p=\frac{x-y}{2}$). Let $\mathscr{A}(x)=\Set{A_p}{p\in PR(x)}$ be the set of divisibility events for $x$ (see Definition \ref{defap}). The following example shows that divisibility events for two different sets of primes do not behave similarly.

\begin{example}
For $n=100$ and $x=12n+4$, the number of cases of $y=12m+6$, $m\in\Nat\cup\set{0}$, $m<n$ such that $7$ divides $x+y$ or $x-y$ (except $\frac{x-y}{2}=7$) is actually $14$ and $13$ divides $x+y$ or $x-y$ (except $\frac{x-y}{2}=13$) in $15$ cases. The number of $x+y$ or $x-y$ those are divisible by both $7$ and $13$ is $2$. Thus 
$$P(A_{7}\cap A_{13})=\frac{2}{100}=0.02 \text{ and } P(A_{7})P(A_{13})=\frac{14}{100}\times\frac{15}{100}=0.14\times 0.15=0.021.$$
So we have $P(A_{7}\cap A_{13})<P(A_{7})P(A_{13})$.

\vspace{1em}
\noindent
Next we see that the number of same cases for $11$ is $18$ and there are $3$ cases where $x+y$ or $x-y$ are divisible by both $7$ and $11$. Thus, in this case, 
$$P(A_7\cap A_{11})=0.03>0.0252=0.14\times 0.18=P(A_7)P(A_{11}).$$
It is interesting to note that for $n=315$, 
$$P(A_7)=\frac{90}{315},\ P(A_{43})=\frac{7}{315}\text{ and }P(A_7\cap A_{43})=\frac{2}{315}=P(A_7)P(A_{43}).$$
\end{example}

\noindent
In fact, it is more difficult to estimate $P(A\cap B)$ in terms of $P(A)$ and $P(B)$ when $A$ is the union of some divisibility events and $B$ is a particular divisibility event. As we see in the above example that it varies on particular cases. But the variation is not far away from what it should be if they were independent. This idea motivates us to define the following: 

\begin{definition}
Let $\mathscr{A}=\set{A_1,A_2,\ldots,A_r}$ be a collection of $r$ events. Let $\lambda (A,B)=\frac{P(B)-P(A\cap B)}{P(B)-P(A)P(B)}$, $\lambda_i=\lambda (\bigcup\limits_{j=1}^{i-1} A_j,A_i)$ and $\lambda_0=\max\,\Set{\lambda_i}{i=2,3,\ldots,r}$. Then $\mathscr{A}$ is called a set of {\em nearly independent} events if 
$$\lambda_0<2.5.$$
Note that, for independent events $A,B$, $\lambda (A,B)=1$. 
\end{definition}

\noindent
We see that Table \ref{tablam0} shows that $\mathscr{A}(x)$ is a set of nearly independent events for $x=12n+4$, $n=10^i$, $1\leq i\leq 10$ and for all $x$ up to $2\times 10^7$. The maximum value of $\lambda_0$ is found to be $2.34968$ for $x=6905498$ in this range. Also we note that cases for which $\lambda_0>2$ are very rare.

\begin{table}[ht]
$$\begin{array}{|c|l|l|l|l|l|l|l|l|l|l|}
\hline
n & 10 & 100 & 1000 & 10000 & 10^5 & 10^6 & 10^7 & 10^8 & 10^9 & 10^{10}\\
\hline
\lambda_0 & 1 & 1.17647 & 1.45289 & 1.52761 & 1.47894 & 1.79971 & 1.58345 & 1.53435 & 1.55956 & 1.56022\\
\hline
\end{array}$$
\caption{Values of $\lambda_0$ for $x=12n+4$ for $n=10$ to $n=10^{10}$.}\label{tablam0}
\end{table}

\begin{table}[ht]
$$\begin{array}{|c|c|c|c|c|c|}
\hline
\text{Range of }x & 10 - 10^3 & 10^3 - 10^4 & 10^4 - 10^5 & 10^5 - 10^6 & 10^6 - 2\times 10^7\\
\hline
\max\,(\lambda_0) & 2.14286 & 2.2222 & 2.24921 & 2.22907 & 2.34968\\
\hline
\text{At }x=& 182 & 4322 & 15692 & 542816 & 6905498\\
\hline
\end{array}$$
\caption{Maximum values of $\lambda_0$ for various ranges of $x$.}\label{tablamx}
\end{table}

\vspace{1em}
\noindent
In general, we prove the following lemmas that lead to the proof of Theorem \ref{tgood}.

\begin{lemma}\label{lemins}
Let $A$ and $B$ be two independent events that occur with probabilities $P(A)$ and $P(B)$ respectively. Let $p,q\in [0,1]$ such that $P(A)<p$ and $P(B)\leq q$. Then $P(A\cup B)<1 - (1 - p) (1 -q)$.
\end{lemma}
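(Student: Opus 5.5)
The plan is to rewrite the union through its complement and use independence. Since $A$ and $B$ are independent, so are their complements, and therefore
$$P(A\cup B)=1-P(\overline{A}\cap\overline{B})=1-P(\overline{A})\,P(\overline{B})=1-(1-P(A))(1-P(B)).$$
This identity is the only place independence enters. Everything after it is a monotonicity argument on the function $\phi(a,b)=1-(1-a)(1-b)$ over $[0,1]^2$.

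The key step is to show that $\phi$ is nondecreasing in each variable, with the appropriate strictness. We have $\partial\phi/\partial a=1-b\ge 0$, and by symmetry $\partial\phi/\partial b=1-a\ge 0$. We compare in two stages:
$$\phi(P(A),P(B))\le \phi(P(A),q)\le \phi(p,q).$$
The first inequality uses $P(B)\le q$ together with $1-P(A)\ge 0$, since $\phi(P(A),q)-\phi(P(A),P(B))=(1-P(A))(q-P(B))\ge 0$. The second uses $P(A)<p$, since $\phi(p,q)-\phi(P(A),q)=(1-q)(p-P(A))$.

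The only delicate point, and the main obstacle, is getting the final inequality strict. The second difference $(1-q)(p-P(A))$ is strictly positive exactly when $q<1$. If $q=1$, both sides can collapse to $1$. For example, take $P(B)=1$: then $P(A\cup B)=1=1-(1-p)(1-q)$, and strictness fails. So I would state the argument under the tacit assumption $q<1$, which holds in every intended application, where $q$ bounds the probability of a divisibility event and is well below $1$. Alternatively, I could note that if $q=1$ but $P(B)<1$, one can swap the roles and use $(1-P(B))(p-P(A))>0$ in the first stage instead. The one genuinely degenerate case, $P(B)=q=1$, gives only equality, and I would flag it as requiring $q<1$ (or $P(B)<1$) for the strict inequality. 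With that hypothesis, chaining the two inequalities gives $P(A\cup B)<1-(1-p)(1-q)$.
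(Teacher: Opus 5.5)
Your argument is correct and follows the same route as the paper: independence gives $P(A\cup B)=1-(1-P(A))(1-P(B))$, and the product is then compared with $(1-p)(1-q)$. The edge case you flag is genuine. When $P(B)=q=1$ the lemma as stated fails, yet the paper's proof asserts $(1-P(A))(1-P(B))>(1-p)(1-q)$ without addressing it; your swapped chain shows that $P(B)<1$ suffices, and the paper's later application has $q=2.5\,pr_1(p_t)<1$.
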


\begin{proof}
We have $P(A)<p$ and $P(B)\leq q$. Now 
$$P(A\cup B)=P(A)+P(B)-P(A\cap B)=P(A)+P(B)-P(A)P(B).$$
Again since $P(A)<p$ and $P(B)\leq q$, we have 
$$(1-P(A))(1-P(B))>(1-p)(1-q)\Longrightarrow P(A)+P(B)-P(A)P(B)<1-(1-p)(1-q).$$
Thus $P(A\cup B)<1-(1-p)(1-q)$ as required.
\end{proof}

\begin{lemma}\label{lemk}
Let $A$ and $B$ be two events that occur with probabilities $P(A)$ and $P(B)$ respectively\\[0.5em]
 such that $0<P(A),P(B)<1$ and $\frac{P(B)-P(A\cap B)}{P(B)-P(A)P(B)}<k$. Then $P(A\cup B)<1-(1-P(A))(1-kP(B))$.
\end{lemma}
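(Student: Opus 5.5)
The plan is to rewrite $P(A\cup B)$ as $P(A)+P(B)-P(A\cap B)$ and use the hypothesis to bound $P(B)-P(A\cap B)$ from above. Since $0<P(A),P(B)<1$, the denominator $P(B)-P(A)P(B)=P(B)(1-P(A))$ is strictly positive. We may therefore multiply the inequality
$$\frac{P(B)-P(A\cap B)}{P(B)-P(A)P(B)}<k$$
through by it without changing its direction. This gives
$$P(B)-P(A\cap B)<kP(B)\bigl(1-P(A)\bigr).$$

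Next I add $P(A)$ to both sides:
$$P(A\cup B)=P(A)+P(B)-P(A\cap B)<P(A)+kP(B)-kP(A)P(B).$$
I then compare the right-hand side with the expansion
$$1-\bigl(1-P(A)\bigr)\bigl(1-kP(B)\bigr)=P(A)+kP(B)-kP(A)P(B).$$
The two expressions agree exactly, so the claimed inequality $P(A\cup B)<1-(1-P(A))(1-kP(B))$ follows at once. This parallels Lemma \ref{lemins}, with the independence identity replaced by the $\lambda$-type hypothesis.

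The only real point to watch is the sign of the denominator. Positivity of $P(B)(1-P(A))$ is exactly where the hypothesis $0<P(A),P(B)<1$ is used, and it is what preserves the strict inequality when we clear the fraction. No assumption on $k$ is needed: if $k$ were small, the hypothesis would simply be unsatisfiable or the bound weaker, but the algebra is unaffected. I expect no genuine obstacle; the proof is a two-line manipulation.
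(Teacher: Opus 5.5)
Your proof is correct and is the paper's argument: clear the positive denominator $P(B)(1-P(A))$, add $P(A)$ to both sides, and recognize the right-hand side as the expansion of $1-(1-P(A))(1-kP(B))$. As you note, the only hypotheses actually used are $P(B)>0$ and $P(A)<1$.
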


\begin{proof}
$$\frac{P(B)-P(A\cap B)}{P(B)-P(A)P(B)}<k$$
$$\Longrightarrow P(B)-P(A\cap B)<kP(B)-kP(A)P(B))\ \text{as } P(B)-P(A)P(B)=P(B)(1-P(A))>0$$
$$\Longrightarrow P(A\cup B)=P(A)+P(B)-P(A\cap B)<P(A)+kP(B)-P(A)(kP(B))=1-(1-P(A))(1-kP(B)).$$
\end{proof}

\begin{definition}
Let $n\geq 4$ and $x=12n+4$. We define
$$\tau(n)=1-(1-pr_1(5))\prod\limits_{\substack{p\, =\, \text{prime}\\ 5<p<\sqrt{x}}} (1-2.5\, pr_1(p)),$$
where
$$pr_1(p)=\left\{\begin{array}{ll}
\frac{\lfloor (n-k_1)/p\rfloor+1}{n}, & \text{if } \Div{p}{x}\\
\frac{\lfloor (n-k_1)/p\rfloor + \lfloor (n-k_2)/p\rfloor +1}{n}, & \text{if } \nDiv{p}{x}\text{ and } \Div{3}{p+1}\\
\frac{\lfloor (n-k_1)/p\rfloor + \lfloor (n-k_2)/p\rfloor +2}{n}, & \text{if } \nDiv{p}{x}\text{ and } \nDiv{3}{p+1},\\
\end{array}\right.$$
$k_1=[(p-[x]_p^1-6)\times 12^{p-2}]_p+1$ and $k_2=[(x-6)\times 12^{p-2}]_p+1$, where $[a]_p=i\in\set{0,1,2,\ldots,p-1}$ and $[a]_p^1=i\in\set{1,2,\ldots,p}$ such that $\Mod{a}{i}{p}$ (see Definition \ref{defeta13} and Lemma \ref{lemfp4}).
\end{definition}

\begin{lemma}\label{lem1good}
Let $n\in\Nat$, $n>17$, then $0<T(n)<\tau(n)<1$.
\end{lemma}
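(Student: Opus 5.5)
The plan is to prove the three inequalities separately. The two outer inequalities are elementary counting facts. The middle one comes from applying Lemma \ref{lemk} iteratively along the primes of $PR(x)$ in increasing order, with $k=2.5$.

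\emph{Positivity of $T(n)$.} For $n>17$ we have $x=12n+4\geq 220$, so $5\in PR(x)$. By Lemma \ref{lemfp4}, $|F(x,5,+)|=\lfloor (n-k_1)/5\rfloor+1\geq 1$, because $k_1\leq 5\leq n$. Hence $f(n)\geq 1$ and $T(n)>0$.

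\emph{The bound $\tau(n)<1$.} It suffices to show that every factor of the product is positive. Since the sets in Lemma \ref{lemfp4} are exact counts inside $S(n)$, we have $pr_1(p)=P(A_p)$. For $p=5$ this gives $1-pr_1(5)>0$, because $A_5\neq S(n)$: each residue class modulo $5$ meets $S(n)$ once $n\geq 5$, so some $y$ avoids both bad residues. For $p\geq 7$, Lemma \ref{lemfp4} gives
$$pr_1(p)\leq \frac{2n/p+2}{n}=\frac{2}{p}+\frac{2}{n}\leq \frac{2}{7}+\frac{2}{n}.$$
Therefore $2.5\,pr_1(p)<1$ holds as soon as $\frac{2}{n}<\frac{2}{5}-\frac{2}{7}=\frac{4}{35}$, that is, $n>17.5$. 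This is exactly where the hypothesis $n>17$ is used. So every factor $1-2.5\,pr_1(p)$ lies in $(0,1)$, the product is positive, and $\tau(n)<1$.

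\emph{The middle inequality $T(n)<\tau(n)$.} List $PR(x)=\{p_1=5<p_2<\cdots<p_r\}$ and set $U_i=\bigcup_{j\leq i}A_{p_j}$, so that $T(n)=P(U_r)$. The claim to prove by induction is
$$P(U_i)\leq 1-(1-pr_1(5))\prod_{2\leq j\leq i}\bigl(1-2.5\,pr_1(p_j)\bigr),$$
with strict inequality for $i\geq 2$. The case $i=1$ is an equality.

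For the inductive step, apply Lemma \ref{lemk} with $A=U_{i-1}$, $B=A_{p_i}$ and $k=2.5$. This requires $0<P(U_{i-1}),P(A_{p_i})<1$, which follows from the estimates above, and also $\lambda_i=\lambda(U_{i-1},A_{p_i})<2.5$. Lemma \ref{lemk} then gives
$$P(U_i)<1-(1-P(U_{i-1}))(1-2.5\,P(A_{p_i})).$$
Since $1-2.5\,P(A_{p_i})>0$, the right-hand side is increasing in $P(U_{i-1})$, so the inductive bound can be substituted to complete the step. This is the same monotonicity mechanism as in Lemma \ref{lemins}.

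\emph{Main obstacle.} The hard step is the hypothesis $\lambda_i<2.5$ for every $i$, which is precisely near-independence of $\mathscr{A}(x)$. Nothing proved earlier in the paper bounds $\lambda(U_{i-1},A_{p_i})$ unconditionally.

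My first attempt would be to obtain $P(A_{p_i}\setminus U_{i-1})$ from the exact union formula of Theorem \ref{thcompfn}, using the Chinese Remainder structure of the sets $u(B)$. That would give $P(A_{p_i}\cap U_{i-1})\geq P(A_{p_i})P(U_{i-1})-O(\text{error})$. The trouble is that the error terms there are of order $2^{i}/n$, which is too large once $i$ is of size $\pi(\sqrt{x})$.

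Realistically, then, I expect the proof to take this condition from the near-independence of $\mathscr{A}(x)$, as supported by Tables \ref{tablam0} and \ref{tablamx}. The lemma would be stated, in effect, under that hypothesis, and the argument above is what turns $\lambda_0<2.5$ into $T(n)<\tau(n)$.
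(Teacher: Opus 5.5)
Your proposal matches the paper's proof essentially step for step: the paper also bounds $pr_1(p)\le \frac{2}{p}+\frac{2}{n}$ to put every factor $1-2.5\,pr_1(p)$ in $(0,1)$ for $n>17$. It then inducts along the primes $5<7<\cdots$ using Lemma \ref{lemk} with $k=2.5$, followed by the monotonicity step it attributes to Lemma \ref{lemins}. You are also right about the obstacle: the paper's proof simply invokes $\lambda_t\le\lambda_0<2.5$, i.e.\ the near-independence hypothesis that appears only in Theorem \ref{tgood}, so the lemma is in fact proved only under that assumption (your explicit argument for $T(n)>0$ covers a point the paper leaves implicit).
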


\begin{proof}
We first note that, since $1\leq k_1,k_2\leq p_i$, $pr_1(p_i)\leq \frac{2}{p_i}+\frac{2}{n}$ for all $p_i\in PR(x)$. Since $p_i\geq 7$ for all $i\geq 2$, we have $2.5\, pr_1(p_i)\leq \frac{5}{2}\, (\frac{2}{p_i}+\frac{2}{n})=\frac{5}{p_i}+\frac{5}{n}<1$ for $n>17$. So $(1-2.5\, pr_1(p_i))\in (0,1)$. Also $pr_1(5)\in (0,1)$. So $\tau(n)\in (0,1)$. 

\vspace{1em}
\noindent
Now $T(n)=\frac{f(n)}{n}$, where $f(n)=|\bigcup\limits_{\substack{p\, =\, \text{prime}\\ 3<p<\sqrt{x}}} \ \left(F(x,p,+)\cup F(x,p,-)\right)|$ by (\ref{eqfn}).

\vspace{1em}
\noindent
 Following Definition \ref{defap}, for any prime $p_i$, ($3<p_i<\sqrt{x}$),\\
 $P(A_{p_i})=|F(x,p_i,+)\cup F(x,p_i,-)|/n=pr_1(p_i)$ (see Lemma \ref{lemfp4} and Definition \ref{defeta13}) and 
$$T(n)=P(\bigcup\limits_{i=1}^{r_x} A_{p_i}).$$
We prove $T(n)<\tau(n)$ by induction. 

\vspace{1em}
\noindent
Let
$$P_{t}=1-(1-pr_1(5))\prod\limits_{i=2}^{t} (1-2.5\, pr_1(p_i)).$$
We have $P(A_5)=pr_1(5)=P_1$. Let $A=A_5$ and $B=A_7$. Now $\lambda_2=\lambda(A,B)\leq \lambda_0<2.5$. Then $P(A\cup B)<1-(1-P(A))(1-2.5\, P(B))$ by Lemma \ref{lemk} considering $k=2.5$.\\
 Thus $P(A_5\cup A_7)<1-(1-pr_1(5))(1-2.5\, pr_1(7))=P_2$. In fact, we can prove this directly as follows:
$$P(A_5\cup A_7)=P(A_5)+P(A_7)-P(A_5\cap A_7)\leq P(A_5)+P(A_7)=pr_1(5)+pr_1(7).$$

\noindent
Now $1-(1-pr_1(5))(1-2.5\, pr_1(7))=pr_1(5)+2.5\, pr_1(7)-2.5\, pr_1(5)\, pr_1(7)$\\[0.5em]
\null\hspace{2.3in} $= \left(pr_1(5)+pr_1(7)\right)+\left(1.5-2.5\, pr_1(5)\right)\, pr_1(7)$.\\[0.5em]
Also $pr_1(5)\leq \frac{2}{5}+\frac{2}{n}<0.6=\frac{1.5}{2.5}$ (for $n> 10$) $\Longrightarrow \left(1.5-2.5\, pr_1(5)\right)>0$\\[0.5em]
$\Longrightarrow 1-(1-pr_1(5))(1-2.5\, pr_1(7))>pr_1(5)+pr_1(7)\geq P(A_5\cup A_7) \text{ as }pr_1(7)>0.$
Thus we proved that 
$$P(\bigcup\limits_{i=1}^{2} A_{p_i}) <P_2.$$

\vspace{1em}
\noindent
Let $A=\bigcup\limits_{i=1}^{t-1} A_{p_i}$ and $B=P(A_{p_t})=pr_1(p_t)$ for some $t>2$. Suppose $P(A)<P_{t-1}$. Now $\lambda_t=\lambda(A,B)\leq \lambda_0<2.5$. Then 
$$P(A\cup B)<1-(1-P(A))(1-2.5\, P(B))$$
by Lemma \ref{lemk} considering $k=2.5$. Therefore 
$$P(\bigcup\limits_{i=1}^{t} A_{p_i})=P(A\cap B)<P(A)+2.5\, P(B)-2.5\, P(A)P(B)<1-(1-P_{t-1})(1-2.5\, pr_1(p_t))=P_t$$
by Lemma \ref{lemins} as $P(A)<P_{t-1}$ and $P(B)=pr_1(p_t)$. This completes the induction. Thus finally, we have 
$$T(n)=P(\bigcup\limits_{i=1}^{r_x} A_{p_i})<\tau(n).$$
Therefore, $0<T(n)<\tau(n)<1$ as required.
\end{proof}

\noindent
Table \ref{tabtnetafn} shows the comparison between $T(n)$ and $\tau(n)$ for $x=12n+4$.

\begin{lemma}\label{lem2good}
Let $n\in\Nat$, then $\tau(n)<1-\frac{1}{n}$ for all $n>18964$ and $\tau(n)<1-\frac{2}{n}$ for all $n>71241$. 
\end{lemma}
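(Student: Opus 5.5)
The plan is to reduce the claim to an explicit lower bound on the product defining $\tau(n)$. The inequality $\tau(n)<1-\frac{c}{n}$ (with $c=1$ or $2$) is equivalent to
$$n\,(1-pr_1(5))\prod\limits_{\substack{p\,=\,\text{prime}\\ 5<p<\sqrt{x}}}(1-2.5\,pr_1(p))>c .$$
As in the proof of Lemma \ref{lem1good}, $1\leq k_1,k_2\leq p$ gives $pr_1(p)\leq \frac{2}{p}+\frac{2}{n}$ for every $p\in PR(x)$. Hence the left side is at least
$$n\left(\frac{3}{5}-\frac{2}{n}\right)\prod\limits_{7\leq p<\sqrt{x}}\left(1-\frac{5}{p}-\frac{5}{n}\right),$$
and each factor is positive for $n>17$. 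I would then split each factor as $(1-\frac{5}{p})\bigl(1-\frac{5}{n(1-5/p)}\bigr)$. Since $1-\frac{5}{p}\geq \frac{2}{7}$ for $p\geq 7$, the second pieces multiply to at least $1-\frac{35}{2n}\,\pi(\sqrt{x})$. Because $\pi(\sqrt{x})=O(\sqrt{n}/\log n)$, this factor is close to $1$ for the $n$ in question.

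The main term is $\prod_{7\leq p<\sqrt{x}}(1-\frac{5}{p})$. I would bound it from below the same way Theorem \ref{2bypform} was derived. Take logarithms, write $\log(1-\frac{5}{p})=-\frac{5}{p}-\sum_{i\geq 2}\frac{(5/p)^i}{i}$, and bound the tail $\sum_{p\geq 7}\sum_{i\geq 2}\frac{(5/p)^i}{i}$ by an absolute constant using the prime zeta values $P(i)$. For the main part $\sum_{7\leq p\leq y}\frac{1}{p}$, use an explicit (Rosser--Schoenfeld type) upper bound $\log\log y+B_1+\frac{1}{(\log y)^2}$. This gives an explicit inequality
$$\prod\limits_{7\leq p<\sqrt{x}}\left(1-\frac{5}{p}\right)\geq \frac{C}{(\log\sqrt{x})^5}$$
with a computable $C>0$. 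The claim then reduces to checking that
$$\frac{3}{5}\,C\,n\left(1-\frac{2}{n}\right)\left(1-\frac{35\,\pi(\sqrt{12n+4})}{2n}\right)>c\,\bigl(\tfrac12\log(12n+4)\bigr)^5 .$$
This holds for all $n\geq N_0$ with some explicit $N_0$, since $n$ eventually dominates $(\log n)^5$.

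The main obstacle is that the stated thresholds $18964$ and $71241$ are clearly sharp values found numerically, whereas the crude bound $pr_1(p)\leq \frac{2}{p}+\frac{2}{n}$, combined with the $(\log n)^5$ loss, will produce an $N_0$ far larger than these. I would therefore close the gap computationally. Compute $\tau(n)$ exactly from Definition \ref{defeta13} (the $k_1,k_2$ formulas of Lemma \ref{lemfp4}) for every $n$ from $18965$, respectively $71242$, up to $N_0$, and verify the inequality directly. This must be done for every $n$, not just sampled values, because $\tau(n)$ is not monotone: the divisibility pattern of $x$ makes $pr_1(p)$ jump between roughly $\frac{1}{p}$ and $\frac{2}{p}$. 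To keep $N_0$ manageable, I would first sharpen the analytic part. Use the exact $pr_1(p)$ for the small primes $p\leq 50$, say, where most of the loss occurs, and apply the crude bound only to the larger primes. The exact small-prime factor depends on $x$ only through residues modulo finitely many primes and can be bounded uniformly from below by a finite check.
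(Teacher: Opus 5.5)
Your argument is correct, and its analytic core is the paper's. Both use the bound $pr_1(p)\le \frac{2}{p}+\frac{2}{n}$, a logarithmic Mertens-type estimate with the higher-order terms controlled by prime zeta values, and a final comparison of $n$ with a fifth power of a logarithm.

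The paper executes this more crudely:
\begin{itemize}
\item it uses $\log(1-u)>-u-u^2$ with $u=\frac{5}{p}+\frac{5}{n}$;
\item it bounds the factors for $5$ and $7$ jointly by $\frac{1}{7}$;
\item it discards the $o(1)$ in $\sum_{p\le y}\frac{1}{p}=\log\log y+B_1+o(1)$;
\item it uses $\pi(y)\approx y/\log y$ as if it were an upper bound;
\item it weakens $\log\log y$ to $\log\log n$.
\end{itemize}
It ends with the condition $n/(\log n)^5>e^{\epsilon_k-c}=0.2045$ (resp.\ $0.409$).

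The numbers $18964$ and $71241$ are exactly where that condition starts to hold. They are outputs of the crude analytic bound, not sharp numerical values. The machine check in Remark~\ref{remgood} covers the ranges below them, from $161$ and from $447$. So the obstacle you anticipate does not arise.

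Your version keeps $(\log\sqrt{x})^5$ and uses an explicit Rosser--Schoenfeld bound. It is rigorous where the paper only argues asymptotically, and it is much sharper. By my rough evaluation, your lower bound for $n(1-\tau(n))$ is about $17$ at $n=18965$ and about $42$ at $n=71242$. Your $N_0$ therefore lies well below both thresholds. The final computational sweep, and the refinement using exact small-prime factors, are unnecessary.

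Two small repairs are needed:
\begin{itemize}
\item $\frac{3}{5}-\frac{2}{n}$ is smaller than $\frac{3}{5}(1-\frac{2}{n})$, so keep the former in your last display.
\item Use an explicit bound such as $\pi(y)<1.25506\,y/\log y$, so that $N_0$ really is explicit.
\end{itemize}
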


\begin{proof}
Let
$$z(n)=(1-pr_1(5))\prod\limits_{\substack{p\, =\, \text{prime}\\ 5<p<\sqrt{12n+4}}} (1-2.5\, pr_1(p)).$$
Let $y=\sqrt{12n+4}$. As before, $pr_1(5)\leq \frac{2}{5}+\frac{2}{n}$ and $2.5\, pr_1(p)\leq \frac{5}{p}+\frac{5}{n}$ for all $5<p<y$. Then 
$$(1-pr_1(5))(1-2.5\, pr_1(7))\geq \left(1-\frac{2}{5}-\frac{2}{n}\right) \left(1-\frac{5}{7}-\frac{5}{n}\right) =
\left(\frac{3}{5}-\frac{2}{n}\right) \left(\frac{2}{7}-\frac{5}{n}\right)$$
$$=\frac{(3n-10)(2n-35)}{35n^2}=\frac{1}{7}+\frac{n^2-125n+350}{35n^2}>\frac{1}{7}\ \text{for } n>122.$$
Thus we have
$$\log z(n)>-\log\, 7+\sum\limits_{\substack{p\, =\, \text{prime}\\ 7<p<y}} \log \left(1-\frac{5}{p}-\frac{5}{n}\right).$$
Let $u=\frac{5}{p}+\frac{5}{n}$. Then $0<u<\frac{1}{2}$ for all $n>110$ as $p\geq 11$. Now
$$\log(1-u)=-u-\sum\limits_{k\geq 2} \frac{u^k}{k}>-u-\frac{u^2}{2}\, \sum\limits_{k\geq 2} u^{k-2}=-u-\frac{u^2}{2(1-u)}.$$
As $u<\frac{1}{2}$, we have $\frac{1}{2(1-u)}<1$. Thus 
$$\log(1-u)>-u-u^2.$$
Now 
$$\sum\limits_{\substack{p\, =\, \text{prime}\\ 7<p<y}} u=\sum\limits_{\substack{p\, =\, \text{prime}\\ 11\leq p<y}} \left(\frac{5}{p}+\frac{5}{n}\right) =5\, \sum\limits_{\substack{p\, =\, \text{prime}\\ 11\leq p<y}} \frac{1}{p} +\frac{5}{n} \, (\pi(y)-4)$$
and 
$$\sum\limits_{\substack{p\, =\, \text{prime}\\ 7<p<y}} u^2=\sum\limits_{\substack{p\, =\, \text{prime}\\ 11\leq p<y}} \left(\frac{5}{p}+\frac{5}{n}\right)^2 =25\, \sum\limits_{\substack{p\, =\, \text{prime}\\ 11\leq p<y}} \frac{1}{p^2} +\frac{50}{n} \, \sum\limits_{\substack{p\, =\, \text{prime}\\ 11\leq p<y}} \frac{1}{p}+\frac{25}{n^2}\, (\pi(y)-4).$$
We compute the following: 
$$\sum\limits_{\substack{p\, =\, \text{prime}\\ 11\leq p<y}} \frac{1}{p}=\sum\limits_{\substack{p\, =\, \text{prime}\\ p\leq y}} \frac{1}{p} -\left(\frac{1}{2}+\frac{1}{3}+\frac{1}{5}+\frac{1}{7}\right)=\log\, \log\, y +B_1+o(1)-1.176$$
$$<\log\, \log\, y +0.262-1.176=\log\, \log\, y-0.914, (\text{where } B_1=0.2614972128\ \cite{HWR}),$$
$$\sum\limits_{\substack{p\, =\, \text{prime}\\ 11\leq p<y}} \frac{1}{p^2}=\sum\limits_{\substack{p\, =\, \text{prime}\\ p\leq y}} \frac{1}{p^2} -\left(\frac{1}{2^2}+\frac{1}{3^2}+\frac{1}{5^2}+\frac{1}{7^2}\right)<P(2)-0.42<0.46-0.42=0.04,$$
$$\text{ where }P(s)= \sum\limits_{p\,\in\, primes} \frac{1}{p^s} \text{ is the prime zeta function with } P(2)=0.45224742,$$ 
$$\pi(y)-4<\frac{\sqrt{12n+4}}{\log\,\sqrt{12n+4}}<\frac{\sqrt{13n}}{\log\,\sqrt{12n}}<\frac{4\sqrt{n}}{\log\,\sqrt{n}}=\frac{8\sqrt{n}}{\log\, n} \text{ as }\pi(y)\approx\frac{y}{\log\, y}\ \cite{HWR},$$
$$\log\log\, y=\log\log\,\sqrt{12n+4}<\log\log\,\sqrt{16n}=\log\, (\log\, 4+\frac{1}{2}\log\, n)<\log\log\, n\text{ for all } n>16.$$
Then summing up all these we have
$$\log\, z(n)>-\log\, 7+\sum\limits_{\substack{p\, =\, \text{prime}\\ 7<p<y}} \log\,(1-u)>-\log\, 7-\sum\limits_{\substack{p\, =\, \text{prime}\\ 7<p<y}} (u+u^2) >-5\log\log\, n -\epsilon^\prime_n+c$$
where $c=-\log\, 7+5\times 0.914-25\times 0.04=1.624$ and 
$$\epsilon^\prime_n=\frac{5}{n}\,\frac{8\sqrt{n}}{\log\, n}+\frac{50}{n}\,(\log\log\, n-0.914)+\frac{25}{n^2}\,\frac{8\sqrt{n}}{\log\, n}<\frac{50\log\log\, n}{n}+\frac{40}{\sqrt{n}\log\, n}+\frac{200}{n\sqrt{n}\log\, n}=\epsilon_n \text{ (say)}.$$
Now $\epsilon_n$ is a decreasing function for $n>1$. We may choose some value for a (large) $n=k$ and $\epsilon_n<\epsilon_k$ for all $n>k$. Now
$$\log\, z(n)>-5\log\log\, n -\epsilon_k+c$$
which implies 
$$z(n)>\frac{e^{c-\epsilon_k}}{(\log\, n)^5}.$$
Thus $z(n)>\frac{1}{n}$ for all $n$ satisfying 
\begin{equation}\label{nn5}
\frac{n}{(\log\, n)^5}>e^{\epsilon_k-c} \text{ where } c=1.624. 
\end{equation}
We need $N_k>k$ to be the minimum value such that (\ref{nn5}) is satisfied for all $n>N_k$. We observed that $N_k$ is $30990,20744,19552,19140,18964,18840$ for $k=1000,5000,10000,15000,18000,20000$ respectively (see Table \ref{ktable}). We choose $k=18000$ as an optimal value for which $\epsilon_k=0.036776$, $e^{\epsilon_k-c}=0.2045$ and (\ref{nn5}) is satisfied for all $n>18964$. Thus we have $z(n)>\frac{1}{n}$ for all $n>18964$.

\vspace{1em}
\noindent
Similarly, we have 
$z(n)>\frac{2}{n}$ for all $n$ satisfying $\frac{n}{(\log\, n)^5}>2e^{\epsilon_k-c}=0.409$ which is true for all $n>71241$. 
Therefore the result follows as $\tau(n)=1-z(n)$.
\end{proof}

\begin{remark}\label{remgood}
We have exhaustively verified that $1-\tau(n)>\frac{1}{n}$ for all $161\leq n\leq 18964$ and $1-\tau(n)>\frac{2}{n}$ for all $447\leq n\leq 71241$ by machine computing. In fact $\tau(161)=0.993763<0.993789=1-\frac{1}{161}$ and $\tau(447)=0.995523<0.995526=1-\frac{2}{447}$. We note that the degree of $x=12n+4$ in $G(x/2)$ is $n(1-T(n))$. So if $T(n)<1-\frac{1}{n}$, then $n(1-T(n))>1$ which implies that $x$ can be expressed as the sum of two positive integers which are odd primes or $1$. Similarly, if $T(n)<1-\frac{2}{n}$, then $n(1-T(n))>2$ which implies that $x$ can be expressed as the sum of two odd primes. 
\end{remark}

\begin{theorem}\label{tgood}
If the set of divisibility events for $x=12n+4$ {\em ($n\in\Nat$, $n>447$)} is nearly independent, then $x$ can be expressed as the sum of two odd primes.
\end{theorem}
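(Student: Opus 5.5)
The plan is to chain Lemma \ref{lem1good}, Lemma \ref{lem2good} and Remark \ref{remgood} with the degree formula of Theorem \ref{thm14}. Let $x=12n+4$ with $n>447$, and assume that $\mathscr{A}(x)$ is nearly independent, i.e.\ $\lambda_0<2.5$.

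The first step is to get $T(n)<\tau(n)$. The inductive argument in Lemma \ref{lem1good} uses exactly the hypothesis $\lambda_t\leq\lambda_0<2.5$ at every stage, through Lemma \ref{lemk} with $k=2.5$ followed by Lemma \ref{lemins}. Its other requirement is $n>17$, which holds here. So near independence gives $T(n)<\tau(n)$.

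The second step is to show $\tau(n)<1-\frac{2}{n}$ for all $n>447$. I split into two ranges:
\begin{itemize}
\item for $n>71241$ this is Lemma \ref{lem2good};
\item for $447\leq n\leq 71241$ it is the machine verification recorded in Remark \ref{remgood}.
\end{itemize}
Combining the two steps gives $T(n)<1-\frac{2}{n}$, that is, $f(n)=nT(n)<n-2$.

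The third step translates this into degrees. By Theorem \ref{thm14}, the degree of $x$ in $G(x/2)$ is $n-f(n)$ or $n-f(n)+1$, so in either case it is at least $n-f(n)>2$. There are two cases.
\begin{itemize}
\item If $x-3$ is prime, then $x=3+(x-3)$ is already a sum of two odd primes, and there is nothing more to prove.
\item Otherwise the degree exceeds $2$. Each neighbour $y$ of $x$ gives a representation $x=\frac{x+y}{2}+\frac{x-y}{2}$ in which both parts are odd primes or $1$. Since $x-1=12n+3$ is divisible by $3$ and is not prime, no representation can use $1$. Hence every neighbour yields a representation as a sum of two odd primes, and at least one exists.
\end{itemize}
In fact, once $1$ is excluded, degree $\geq 1$ already suffices, so the weaker bound $T(n)<1-\frac{1}{n}$ would also do. I will nevertheless use the $\frac{2}{n}$ bound so as to match Remark \ref{remgood}.

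The main obstacle is the finite range $447\leq n\leq 71241$, which rests on computation rather than on an analytic bound. A secondary obstacle is the rigor of Lemma \ref{lem2good} itself, which uses $\pi(y)\approx y/\log y$ and drops an $o(1)$ term in Mertens' sum. For these I will rely on the paper's stated results as given, and at most point out that explicit Rosser--Schoenfeld bounds could replace the asymptotics if needed.
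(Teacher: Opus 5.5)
Your proposal is correct and follows the paper's proof, which simply chains these results. Lemma \ref{lem1good}, whose induction uses $\lambda_0<2.5$, gives $T(n)<\tau(n)$. Lemma \ref{lem2good} together with the machine check of Remark \ref{remgood} gives $\tau(n)<1-\frac{2}{n}$ for $n>447$. Theorem \ref{thm14}, combined with the compositeness of $x-1=12n+3$, then turns this into a representation of $x$ as a sum of two odd primes.

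Your side remark can be pushed further. Since the summand $1$ never occurs for $x=12n+4$, already $T(n)<1$ (equivalently $n-f(n)\geq 1$) suffices. So under the same hypothesis Lemma \ref{lem1good} alone gives the conclusion for every $n>17$, with no need for Lemma \ref{lem2good} or the computation.
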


\begin{proof}
The proof follows from Lemmas \ref{lem1good}, \ref{lem2good} and Remark \ref{remgood}.
\end{proof}

\begin{table}[t]
{\footnotesize $$\begin{array}{|c|l|l|l|l|l|l|l|l|l|l|}
\hline
n & 10 & 10^2 & 10^3 & 10^4 & 10^5 & 10^6 & 10^7 & 10^8 & 10^9 & 10^{10}\\
\hline
T(n) & 0.5 & 0.73 & 0.888 & 0.9251 & 0.94666 & 0.965814 & 0.974293 & 0.975922 & 0.983884 & 0.986781\\
\hline
\tau(n) & 0.86875 & 0.957164 & 0.99596 & 0.998257 & 0.999172 & 0.999718 & 0.999855 & 0.999817 & 0.999952 & 0.99997\\
\hline
\end{array}$$}
\caption{Comparison table for $T(n)$ and $\tau(n)$ for $x=12n+4$.}\label{tabtnetafn}
\end{table}

\vspace{1em}
\noindent
The discussion we made so far in this section for a particular form of even integers, namely, $x=12n+4$, ($n\in\Nat$) for clear understanding that avoids technical rigor if we did it generally for all possible forms at a time. In view of Theorems \ref{thm24810}, \ref{thm612}, Table \ref{tablamx} and Tables \ref{tabtetan0} - \ref{tabtetan8}, we see that $\mathscr{A}(x)$ is a set of nearly independent events for any $x\leq 2\times 10^7$ and for $x=12n+k$, $n=10^i$ ($1\leq i\leq 10$), $k=0,2,4,6,8,10$, i.e., in all other forms also. Following the above arguments one can prove Theorem \ref{tgood} for large even integers, in general, and we may conclude the following:

\begin{theorem}\label{tgoodgen}
If the set of divisibility events for a large even positive integer is nearly independent, then $x$ can be expressed as the sum of two odd primes.
\end{theorem}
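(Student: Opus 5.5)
The plan is to repeat, for each residue class $x=12n+k$ with $k\in\{0,2,6,8,10\}$, the argument that proved Theorem \ref{tgood} for $k=4$. The exact counts come from Theorems \ref{thm24810} and \ref{thm612}.

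First, for a fixed $k$, I will set up the analogue of Definition \ref{defap}. The sample space is the union of the candidate neighbour progressions: one progression for $k=2,4,8,10$, of size $n_1$, and two progressions for $k=0,6$, of total size $n+n_1$. The event $A_p$ is the set of non-neighbours produced by $p$, so $T(n)=f(n)/N$, where $N$ is the sample size. By the formulas for $|F(x,p,+)\cup F(x,p,-)|$ in those theorems, I will define $pr_1(p)=|F(x,p,+)\cup F(x,p,-)|/N$. Since every $k_i\le p$, I get the uniform bound $pr_1(p)\le \frac{2}{p}+\frac{c}{N}$ for a small absolute constant $c$ (about $3$ in the two-progression case). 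Then I define
\[\tau(n)=1-(1-pr_1(5))\prod_{5<p<\sqrt{x}}(1-2.5\,pr_1(p)).\]

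Second, I will rerun the proof of Lemma \ref{lem1good} verbatim. Lemma \ref{lemk} with $k=2.5$, the hypothesis $\lambda_0<2.5$, and Lemma \ref{lemins} drive the induction and give $0<T(n)<\tau(n)<1$ for $n$ beyond a small threshold. This step only needs $pr_1(5)<0.6$ and $2.5\,pr_1(p)<1$ for $p\ge 7$, and both follow from the uniform bound once $N$ exceeds a constant.

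Third, I will redo Lemma \ref{lem2good}. Put $z(n)=1-\tau(n)$. Using $\log(1-u)>-u-u^2$ for $u<1/2$, Mertens-type estimates for $\sum 1/p$, the value $P(2)$, and $\pi(y)\ll y/\log y$, I expect a lower bound $z(n)>e^{c'-\epsilon_n}/(\log n)^5$. Since $N/(\log n)^5\to\infty$, this gives $z(n)>3/N$ for all sufficiently large $n$. The degree then satisfies
\[d(x)\ge N(1-T(n))>N(1-\tau(n))>3.\]
Now I use the exact degree formula: up to the $\pm1$ corrections for $x-3$ or $x-1$ prime in Theorems \ref{thm24810} and \ref{thm612}, the degree counts representations by odd primes or $1$. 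At most one representation uses $1$, namely $x=(x-1)+1$. So $d(x)\ge 2$ forces a representation $x=p+q$ with $p,q$ odd primes. This is exactly why the statement only claims the result for "large" $x$: no explicit machine-verified threshold analogous to Remark \ref{remgood} is needed.

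The main obstacle is bookkeeping in the two-progression cases $k=0,6$. There the events $A_p$ live on a union of two arithmetic progressions. I must check that the uniform bound on $pr_1(p)$ still has leading term $2/p$ rather than $4/p$. If the count were $4/p$, then $2.5\,pr_1(5)$ and $2.5\,pr_1(7)$ could exceed $1$ and the product would break. I would verify this first from the formula in Theorem \ref{thm612}: both progressions contribute about $2/p$ each, but the count is normalised by $n+n_1\approx 2n$, which keeps the leading term at $2/p$. The exponent $5$ in $(\log n)^5$ is unchanged, so only the constants shift.
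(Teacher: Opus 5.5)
Your proposal is correct and takes the same route as the paper. The paper proves this theorem only by asserting that the $12n+4$ argument (Lemmas \ref{lem1good} and \ref{lem2good}) carries over to the other residue classes through Theorems \ref{thm24810} and \ref{thm612}; you carry out that transfer, including the bookkeeping the paper leaves implicit, namely that normalising by $n+n_1$ in the cases $k=0,6$ keeps $pr_1(p)\le \frac{2}{p}+\frac{3}{n+n_1}$, so the product still behaves like $(\log n)^{-5}$.
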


\noindent
Now we wish to study the possible bounds for $\lambda_0=\max\set{\lambda(A,B)}$. By our observations, we assumed $\lambda_0<2.5$. It may not be easy to prove. Moreover, the global constant upper bound $\lambda_0$ cannot be $3.5$ or more as we need $\frac{2\lambda_0}{7}<1$ to prove Theorem \ref{tgood} (the other term $\frac{2\lambda_0}{n}$ would be small for large $n$). Thus in the following we wish to estimate the possible upper bound for $\lambda$ which may not be constant but dynamic.

\vspace{1em}
\noindent
Let $\mathscr{A}=\set{A_1,A_2,\ldots,A_r}$ be a set of divisibility events. Let $A=\bigcup\limits_{j=1}^{i-1} A_j$ and $B=A_i$ for some $i$ where $2\leq i\leq r$. Let $\lambda=\lambda(A,B)=\frac{P(B)-P(A\cap B)}{P(B)-P(A)P(B)}$. 

\begin{lemma}\label{kpb1}
Suppose $P(A)<1$. Then $P(A\cup B)<1$ if and only if $\lambda P(B)<1$.
\end{lemma}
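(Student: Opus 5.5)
The plan is to express $P(A\cup B)$ directly in terms of $\lambda$, $P(A)$ and $P(B)$, as in the proof of Lemma \ref{lemk}, but now with equality instead of inequality. From the definition $\lambda=\frac{P(B)-P(A\cap B)}{P(B)-P(A)P(B)}$ we get
$$P(B)-P(A\cap B)=\lambda\,P(B)\,(1-P(A)).$$
The denominator $P(B)(1-P(A))$ is nonzero because $P(A)<1$, assuming $P(B)>0$. If $P(B)=0$, then $\lambda$ is undefined; in that degenerate case I would either exclude it or note that $P(A\cup B)=P(A)<1$ trivially.

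Substituting this into $P(A\cup B)=P(A)+P(B)-P(A\cap B)$ gives
$$P(A\cup B)=P(A)+\lambda P(B)(1-P(A))=1-(1-P(A))(1-\lambda P(B)).$$
This is the key identity. It is exactly the bound of Lemma \ref{lemk} with $k=\lambda$, now holding with equality.

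Next I would read off the equivalence from the identity:
$$P(A\cup B)<1 \iff (1-P(A))(1-\lambda P(B))>0.$$
Since $1-P(A)>0$ by hypothesis, dividing by it shows this holds if and only if $1-\lambda P(B)>0$, that is, $\lambda P(B)<1$. Both directions come at once, because every step is an equivalence.

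There is no real obstacle. The only care needed is to state the standing assumption $P(B)>0$, so that $\lambda$ is defined, and to note that the sign of $1-P(A)$ is what makes the division legitimate in both directions.
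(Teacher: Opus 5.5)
Your proposal is correct and is essentially the paper's argument. The paper runs the chain $P(A\cup B)<1 \iff P(B)-P(A\cap B)<1-P(A)$ and divides by $P(B)(1-P(A))$, while you package the same algebra as the exact identity $P(A\cup B)=1-(1-P(A))(1-\lambda P(B))$ (the equality case of Lemma \ref{lemk}) and divide by $1-P(A)$; your explicit assumption $P(B)>0$ is one the paper's proof also needs but leaves tacit.
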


\begin{proof}
We have $P(A\cup B)=P(A)+P(B)-P(A\cap B)<1\Longleftrightarrow P(B)-P(A\cap B)<1-P(A)\Longleftrightarrow \lambda=\frac{P(B)-P(A\cap B)}{P(B)-P(A)P(B)}<\frac{1}{P(B)}\Longleftrightarrow \lambda P(B)<1$.
\end{proof}

\noindent
In the following, let us estimate an upper bound of $\lambda$. We note that 
$$\frac{P(B)-P(A\cap B)}{P(B)-P(A)P(B)}\leq \frac{1}{1-P(A)}$$
for any such $A$ and $B$ as $P(A\cap B)\geq 0$. Thus if we can estimate $\frac{1}{1-P(A)}$, we can get an upper bound of $\lambda$. Now suppose $B=A_{p_i}$ for some prime $p_i$ and $A=\bigcup\limits_{j=1}^{i-1} A_{p_j}$ (see Definition \ref{defap}). Let us estimate $P(A_{p_j})=pr_3(p_j)$ where $pr_3(p_j)=\frac{1}{p_j}$ if $\Div{p}{x}$ and $=\frac{2}{p_j}$ if $\nDiv{p}{x}$ by (\ref{eqnetaap}). Then we have the following approximation: 
$$1-P(A)\approx\prod\limits_{\Div{p}{x},\, 3<p<p_i} \left(1-\frac{1}{p}\right)\, \prod\limits_{\nDiv{p}{x},\, 3<p<p_i} \left(1-\frac{2}{p}\right).$$
Now $\frac{1}{p}<\frac{2}{p}\Longrightarrow 1-\frac{2}{p}<1-\frac{1}{p}\Longrightarrow \frac{1}{1-\frac{1}{p}}<\frac{1}{1-\frac{2}{p}}$. Thus we have 
$$\frac{1}{1-P(A)}<\frac{1}{\prod\limits_{3<p<p_i} \left(1-\frac{2}{p}\right)}=\frac{1}{3\prod\limits_{2<p<p_i} \left(1-\frac{2}{p}\right)}\sim\frac{(\log\, p_i)^2}{3e^{-\beta}} = 0.400434\, (\log\, p_i)^2<\frac{(\log\, p_i)^2}{2}$$
(by Theorem \ref{2bypform}), where $\beta=0.183407$. 

\begin{definition}\label{defblncd}
Let $\mathscr{A}=\set{A_1,A_2,\ldots,A_r}$ be a collection of $r$ events. Let $\lambda (A,B)=\frac{P(B)-P(A\cap B)}{P(B)-P(A)P(B)}$, $\lambda_i=\lambda (\bigcup\limits_{j=1}^{i-1} A_j,A_i)$ for $i=2,3,\ldots,r$. Then $\mathscr{A}$ is called a set of {\em balanced} events if $\lambda_i<\frac{(\log\, p_i)^2}{2}$, where $p_i$ is the $(i+2)^{\text{th}}$ prime, for all $i=2,3,\ldots,r$.
\end{definition}

\noindent
Note that $\frac{(\log\, p)^2}{2}>4$ for any prime $p\geq 17$. Thus it is a weaker condition than the former one.

\begin{theorem}\label{thmblncd}
If the set of divisibility events for a large even positive integer $x$ is balanced, then $T(n)<1$. 
\end{theorem}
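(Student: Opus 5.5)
The plan is an induction on the number of primes in $PR(x)$, in the spirit of the proof of Lemma \ref{lem1good}. The constant $2.5$ is replaced by the dynamic bound $k_i=\frac{(\log p_i)^2}{2}$ supplied by the balanced hypothesis, and Lemma \ref{kpb1} controls each step. Write $A^{(t)}=\bigcup_{i=1}^{t}A_{p_i}$, so that $T(n)=P(A^{(r_x)})$. The aim is to show $P(A^{(t)})<1$ for every $t=1,\ldots,r_x$.

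For the base case, $P(A^{(1)})=P(A_5)=pr_1(5)\leq \frac25+\frac2n<1$ once $n$ is moderately large. For the inductive step, assume $P(A)<1$ with $A=A^{(t-1)}$, and put $B=A_{p_t}$. By Lemma \ref{kpb1} it suffices to show $\lambda_t P(B)<1$. The balanced hypothesis gives $\lambda_t<\frac{(\log p_t)^2}{2}$. By Lemma \ref{lemfp4} (and Theorems \ref{thm24810}, \ref{thm612} for the other residue classes), $P(B)=pr_1(p_t)\leq \frac{2}{p_t}+\frac{2}{n}$. It is therefore enough to verify
$$\frac{(\log p)^2}{2}\left(\frac{2}{p}+\frac{2}{n}\right)=\frac{(\log p)^2}{p}+\frac{(\log p)^2}{n}<1$$
for every prime $5<p<\sqrt{x}$, with $x$ large.

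I expect this elementary inequality to be the main obstacle, and it is where the cases split. The function $\frac{(\log p)^2}{p}$ is at most $4/e^2\approx 0.541$, its maximum near $p=e^2$. So the first term stays below about $0.55$ for all $p$. The second term is at most $\frac{(\log\sqrt{x})^2}{n}\approx\frac{(\log x)^2}{4n}$, and since $n\approx x/12$ this tends to $0$. Hence the sum is below $1$ once $x$ is large enough, and I would determine an explicit threshold, say where $\frac{3(\log x)^2}{x}<0.4$, to make "large" precise.

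One must also handle the indexing of $\lambda_i$ in Definition \ref{defblncd}, which starts at $i=2$ with $p_i$ the $(i+2)$th prime. This means $p_2=7$, and $\frac{(\log 7)^2}{7}\approx0.54$, so the bound holds uniformly. With the inequality in hand, the induction closes, giving $T(n)=P(A^{(r_x)})<1$ and hence $n(1-T(n))>0$. For $x$ not of the form $12n+4$, I would repeat the argument with $n_1$ or $n+n_1$ as the normaliser, as in Theorems \ref{thm24810} and \ref{thm612}. The bound $pr_1(p)\leq\frac2p+O(\frac1n)$ persists there, so the same estimate applies.
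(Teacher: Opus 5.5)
Your proposal is correct and is essentially the paper's proof. Both argue by induction over the primes $p_1=5,\,p_2=7,\ldots$, applying Lemma~\ref{kpb1} at each step, with the balanced bound $\lambda_i<\frac{(\log p_i)^2}{2}$ giving $\lambda_iP(A_{p_i})<1$. The one difference is that the paper simply drops the $\frac{1}{n}$ terms by setting $P(A_{p_j})=pr_3(p_j)\le\frac{2}{p_j}$, whereas you keep $pr_1(p)\le\frac{2}{p}+\frac{2}{n}$ and absorb the error using $\frac{(\log p)^2}{p}\le\frac{4}{e^2}$; this makes ``large'' explicit and is slightly more rigorous.
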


\begin{proof}
Following the proof of Theorem \ref{tgood}, we have $T(n)=P(\bigcup\limits_{j=1}^{r_x} A_{p_j})$, where $p_j$ denotes the $(j+2)^{\text{th}}$ prime. We consider $n$ to be large enough so that we may ignore $\frac{1}{n}$ and other small terms and assume $P(A_{p_j})=pr_3(p_j)$ for $j=1,2,\ldots, r_x$ (see (\ref{eqnetaap})). We prove $T(n)<1$ by induction.

\vspace{1em}
\noindent
As the basic step we have $P(A_5)=pr_3(5)\leq \frac{2}{5}<1$ and 
$$P(A_5\cup A_7)\leq P(A_5)+P(A_7)\leq \frac{2}{5}+\frac{2}{7}=0.685714<1.$$
Also note that $\lambda_2 P(A_7)<\frac{(\log\, 7)^2}{2}\,\frac{2}{7}=0.540938<1$ by Definition \ref{defblncd}, where $\lambda_2=\lambda(A_5,A_7)$. Thus by Lemma \ref{kpb1}, we have $P(A_5\cup A_7)<1$.

\vspace{1em}
\noindent
Now suppose for some $i>1$, $A=\bigcup\limits_{j=1}^{i-1} A_j$, $B=A_{p_i}$ and $P(A)<1$.\\[1em]
By Definition \ref{defblncd}, $\lambda(A,B)=\lambda_i<\frac{(\log\, p_i)^2}{2}$. 
Then 
$$\lambda_i P(B)< \frac{(\log\, p_i)^2}{2}\,\frac{2}{p_i}=\frac{(\log\, p_i)^2}{p_i}<1 \text{ as } p_i>5.$$
Then by Lemma \ref{kpb1}, we have $P(\bigcup\limits_{j=1}^{i} A_j)=P(A\cup B)<1$. Thus, by induction, we have\\
 $T(n)=P(\bigcup\limits_{j=1}^{r_x} A_{p_j})<1$ as required.
\end{proof}

\begin{remark}
It is interesting to note that the upper bound of $\frac{1}{1-P(A)}$ estimated above (before Definition \ref{defblncd}) as $\frac{(\log\, p_i)^2}{2}$ incidentally proposes a closer upper bound of $T(n)$ than $\tau(n)$, namely, 
$$T(n)\leq 1-\frac{2}{(\log\, p_i)^2}=\tau^\prime (n) \text{ (say)},$$
where $p_i$ is the highest prime less than $\sqrt{x}$ (for $x=12n+4$, see Table \ref{newubound}).\\
 Also note that $\tau^\prime (n)=1-\frac{2}{(\log\, p_i)^2}<1-\frac{2}{(\log\, \sqrt{12n+4})^2}<1-\frac{2}{n}$ for all $n>3$.
\end{remark}

\begin{table}[ht]
{\footnotesize $$\begin{array}{|c|l|l|l|l|l|l|l|l|l|l|}
\hline
n & 10 & 10^2 & 10^3 & 10^4 & 10^5 & 10^6 & 10^7 & 10^8 & 10^9 & 10^{10}\\
\hline
T(n) & 0.5 & 0.73 & 0.888 & 0.9251 & 0.94666 & 0.965814 & 0.974293 & 0.975922 & 0.983884 & 0.986781\\
\hline
\tau^\prime (n) & 0.652168 & 0.830397 & 0.909127 & 0.940957 & 0.959145 & 0.969889 & 0.976881 & 0.981694 & 0.985147 & 0.987707\\
\hline
\frac{\tau^\prime (n)}{T(n)} & 1.30434 & 1.13753 & 1.02379 & 1.01714 & 1.01319 & 1.00422 & 1.00266 & 1.00591 & 1.00128 & 1.00094 \\
\hline
\end{array}$$}
\caption{Comparison table for $T(n)$ and $\tau^\prime (n)$ for $x=12n+4$.}\label{newubound}
\end{table}

\section{Conclusion}

In this paper, we computed degrees of a vertex (positive even integer) $x$ in the near Goldbach graph $G(x/2)$. We first locate some desired subset of vertices in which all neighbors of $x$ may lie (except possibly one when $x-3$ is a prime number). Then we found two exact formulas for degree of any $x$ in $G(x/2)$ which gives exact formulas for counting the number of ways that an even positive integer can be expressed as a sum of two odd primes. In order to calculate values for large $x$, we require approximations. We compared approximated values with the exact values for a large number of even integers to observe close similarity between them. Moreover we obtain a closed form as a constant multiple of $\frac{x}{(\log\, x)^2}$ that approximates the degree of (large) $x$ in $G(x/2)$, which is to close to Hardy-Littlewood conjecture. But the major problem is to show analytically that the exact value of $T(n)=\frac{f(n)}{n}<1$ or to be precise less than $1-\frac{2}{n}$, where $1-T(n)$ is the probability of a vertex (in the desired set) to be a neighbor of $x$. We assumed some conditions on the basis of our observations. With one of these assumptions we could be able to produce a function $\tau(n)$ and prove that $0<T(n)<\tau(n)<1-\frac{2}{n}$ for large $x$ which implies $x$ can be expressed as a sum of two odd primes. The other one leads to simply $0<T(n)<1$. All we need is to prove these conditions in future to complete the solution.

\vspace{2em}
\noindent
{\bf Acknowledgement}

\vspace{1em}
\noindent
This research is partially supported by Science \& Engineering Research Board (SERB) (presently Anusandhan National Research Foundation (ANRF)), Department of Science and Technology, Government of India under MATRICS Scheme (MTR/2022/000050 dated 29.12.2022). We express our gratitude to Professor Buddhadeb Sau, Department of Mathematics, Jadavpur University, India, for providing access to a high-performance server that enabled computations with large integers. We also extend our sincere thanks to Dr.~Arghya Datta of D$\acute{\text{e}}$partement de math$\acute{\text{e}}$matiques et de statistique, Universit$\acute{\text{e}}$ de Montr$\acute{\text{e}}$al, Canada and Mr. Biswanath Samanta of Indian Institute of Science Education and Research, Pune, India for their valuable suggestions and insightful discussions, which have significantly improved this paper.

\newpage

\section{Appendix I}

\begin{table}[h]
$$\begin{array}{|l|l|l|l|l|l|l|l|}
\hline
n & T(n) & \tau_1(n) & \tau_2(n) & T(n)/\tau_2(n) & \tau(n) & \lambda_0 & \text{degree} \\
\hline 
10 &  0.4 & 0.4 & 0.605 & 0.66116 & 0.7 & 1 & 12 \\
\hline
10^2 & 0.73 & 0.739975 & 0.786916 & 0.92767 & 0.974453 & 1.13636 & 54\\
\hline
10^3 & 0.849 & 0.85605 & 0.864145 & 0.98247 & 0.994627 & 1.34454 & 302\\
\hline
10^4 & 0.9055 & 0.903006 & 0.904325 & 1.0013 & 0.998027 & 1.34269 & 1890\\
\hline
10^5 & 0.93661 & 0.932644 & 0.932891 & 1.00399 & 0.99921 & 1.45267 & 12678\\
\hline
10^6 & 0.954561 & 0.950011 & 0.95006 & 1.00474 & 0.999625 & 1.49085 & 90877\\
\hline
10^7 & 0.965758 & 0.961591 & 0.961601 & 1.00432 & 0.999806 & 1.49346 & 684832\\
\hline 
10^8 & 0.97324 & 0.969561 & 0.969563 & 1.00379 & 0.999892 & 1.51027 & 5352052\\
\hline
10^9 & 0.978518 & 0.975285 & 0.975286 & 1.00331 & 0.999936 & 1.52426 & 42963384\\
\hline
10^{10} & 0.982375 & 0.979541 & 0.979541 & 1.00289 & 0.99996 & 1.53962 & 352503092\\
\hline
\end{array}$$
\caption{Values of $T(n)$, $\tau_1(n),\tau_2(n),\tau(n)$, $\lambda_0$ and degree$(x)$ for $x=12n$.}\label{tabtetan0}
\end{table}

\vspace{1em}

\begin{table}[h]
$$\begin{array}{|l|l|l|l|l|l|l|l|}
\hline
n & T(n) & \tau_1(n) & \tau_2(n) & T(n)/\tau_2(n) & \tau(n) & \lambda_0 & \text{degree}\\
\hline 
10 &  0.7 & 0.616 & 0.848104 & 0.82537 & 0.85 & 1.66667 & 3\\
\hline
10^2 & 0.81 & 0.806024 & 0.843382 & 0.96042 & 0.981125 & 1.53846 & 19\\
\hline
10^3 & 0.885 & 0.88502 & 0.891399 & 0.99282 & 0.995157 & 1.40292 & 115\\
\hline
10^4 & 0.9274 & 0.924583 & 0.925596 & 1.00195 & 0.998368 & 1.65856 & 726\\
\hline
10^5 & 0.94756 & 0.943964 & 0.944171 & 1.00359 & 0.999204 & 1.60389 & 5245\\
\hline
10^6 & 0.959249 & 0.95501 & 0.955054 & 1.00439 & 0.999368 & 1.54613 & 40751\\
\hline
10^7 & 0.974162 & 0.971 & 0.971008 & 1.00325 & 0.999852 & 1.56817 & 258382\\
\hline 
10^8 & 0.979928 & 0.977171 & 0.977172 & 1.00282 & 0.999919 & 1.54577 & 2007225\\
\hline
10^9 & 0.983888 & 0.981464 & 0.981464 & 1.00247 & 0.999952 & 1.57394 & 16111594\\
\hline
10^{10} & 0.986277 & 0.984072 & 0.984072 & 1.00224 & 0.999967 & 1.56963 & 137230841\\
\hline
\end{array}$$
\caption{Values of $T(n)$, $\tau_1(n),\tau_2(n),\tau(n)$, $\lambda_0$ and degree$(x)$ for $x=12n+2$.}\label{tabtetan2}
\end{table}

\newpage

\begin{table}[h]
$$\begin{array}{|l|l|l|l|l|l|l|l|}
\hline
n & T(n) & \tau_1(n) & \tau_2(n) & T(n)/\tau_2(n) & \tau(n) & \lambda_0 & \text{degree}\\
\hline 
10 &  0.52381 & 0.54519 & 0.749627 & 0.69876 & 0.744169 & 1.07692 & 10\\
\hline
10^2 & 0.78607 & 0.803124 & 0.8405 & 0.93524 & 0.980285 & 1.27909 & 43\\
\hline
10^3 & 0.877061 & 0.882841 & 0.889174 & 0.98638 & 0.994963 & 1.36394 & 246\\
\hline
10^4 & 0.927854 & 0.925321 & 0.926318 & 1.00166 & 0.998417 & 1.46824 & 1443\\
\hline
10^5 & 0.95236 & 0.949045 & 0.94923 & 1.0033 & 0.999394 & 1.79442 & 9528\\
\hline
10^6 & 0.965979 & 0.962508 & 0.962545 & 1.00357 & 0.999719 & 1.5747 & 68042\\
\hline
10^7 & 0.969165 & 0.965432 & 0.965441 & 1.00386 & 0.999673 & 1.53195 & 616691\\
\hline 
10^8 & 0.979939 & 0.977171 & 0.977172 & 1.00283 & 0.999919 & 1.5523 & 4012107 \\
\hline
10^9 & 0.98389 & 0.981464 & 0.981464 & 1.00247 & 0.999952 & 1.53503 & 32219199 \\
\hline
10^{10} & 0.985993 & 0.983743 & 0.983743 & 1.00229 & 0.999964 & 1.55218 & 280130367\\
\hline
\end{array}$$
\caption{Values of $T(n)$, $\tau_1(n),\tau_2(n),\tau(n)$, $\lambda_0$ and degree$(x)$ for $x=12n+6$.}\label{tabtetan6}
\end{table}

\vspace{1em}

\begin{table}[h]
$$\begin{array}{|l|l|l|l|l|l|l|l|}
\hline
n & T(n) & \tau_1(n) & \tau_2(n) & T(n)/\tau_2(n) & \tau(n) & \lambda_0 & \text{degree}\\
\hline 
10 &  0.636364 & 0.574005 & 0.83411 & 0.76293 & 0.810669 & 1.57143 & 4\\
\hline
10^2 & 0.80198 & 0.80452 & 0.84311 & 0.95122 & 0.980702 & 1.35753 & 20\\
\hline
10^3 & 0.883117 & 0.884274 & 0.890916 & 0.99125 & 0.9951 & 1.67251 & 117\\
\hline
10^4 & 0.916708 & 0.912725 & 0.913912 & 1.00306 & 0.996674 & 1.61977 & 833\\
\hline
10^5 & 0.95234 & 0.949481 & 0.94967 & 1.00281 & 0.999407 & 1.53231 & 4766\\
\hline
10^6 & 0.965978 & 0.962427 & 0.962464 & 1.00365 & 0.999717 & 1.62772 & 34022\\
\hline
10^7 & 0.971657 & 0.968213 & 0.968222 & 1.00355 & 0.999807 & 1.60554 & 283436\\
\hline 
10^8 & 0.979917 & 0.977171 & 0.977172 & 1.00281 & 0.999919 & 1.58452 & 2008273\\
\hline
10^9 & 0.983887 & 0.981464 & 0.981464 & 1.00247 & 0.999952 & 1.54933 & 16112912\\
\hline
10^{10} & 0.984137 & 0.981587 & 0.981587 & 1.0026 & 0.999932 & 1.55399 & 158634731\\
\hline
\end{array}$$
\caption{Values of $T(n)$, $\tau_1(n),\tau_2(n),\tau(n)$, $\lambda_0$ and degree$(x)$ for $x=12n+8$.}\label{tabtetan8}
\end{table}

\newpage

\begin{table}[h]
$$\begin{array}{|l|l|l|l|l|l|l|l|}
\hline
n & T(n) & \tau_1(n) & \tau_2(n) & T(n)/\tau_2(n) & \tau(n) & \lambda_0 & \text{degree}\\
\hline 
10 &  0.4 & 0.496 & 0.741455 & 0.53948 & 0.85 & 1 & 7\\
\hline
10^2 & 0.68 & 0.709893 & 0.76328 & 0.890892 & 0.963464 & 1.25 & 32\\
\hline
10^3 & 0.849 & 0.856183 & 0.864162 & 0.982455 & 0.994669 & 1.46452 & 152\\
\hline
10^4 & 0.8957 & 0.892232 & 0.893689 & 1.00225 & 0.997206 & 1.43747 & 1043\\
\hline
10^5 & 0.92078 & 0.915304 & 0.915606 & 1.00565 & 0.997985 & 1.55161 & 7923\\
\hline
10^6 & 0.947312 & 0.942104 & 0.94216 & 1.00547 & 0.999408 & 1.53846 & 52688\\
\hline
10^7 & 0.960428 & 0.955617 & 0.955629 & 1.00502 & 0.9997 & 1.89358 & 395717\\
\hline 
10^8 & 0.970145 & 0.966056 & 0.966058 & 1.00423 & 0.999845 & 1.5451 & 2985520\\
\hline
10^9 & 0.978489 & 0.975258 & 0.975258 & 1.00331 & 0.999935 & 1.55233 & 21510901\\
\hline
10^{10} & 0.979089 & 0.97573 & 0.97573 & 1.00344 & 0.999931 & 2.10691 & 209105088\\
\hline
\end{array}$$
\caption{Values of $T(n)$, $\tau_1(n),\tau_2(n),\tau(n)$, $\lambda_0$ and degree$(x)$ for $x=12n+10$.}\label{tabtetan10}
\end{table}

\begin{table}[h]
\centering
\begin{tabular}{|c|c|c|c|c|}
\hline
 & & & & Minimum $N_k$ solving \\
$k$ & $\epsilon_k$ & $C_k = e^{\,c-\epsilon_k}$ & $1/C_k$ & $\displaystyle \frac{n}{(\log n)^5} > \frac{1}{C_k}$ \\
\hline
$1000$  & $0.294960$ & $3.0323$ & $0.3297$ & $30990$ \\
$3000$  & $0.126037$ & $4.4726$ & $0.2236$ & $22695$ \\
$5000$  & $0.084893$ & $4.7998$ & $0.2083$ & $20744$ \\
$10000$ & $0.056001$ & $5.0286$ & $0.1989$ & $19552$ \\
$15000$ & $0.042480$ & $5.1644$ & $0.1936$ & $19140$ \\
$18000$ & $0.036776$ & $4.8902$ & $0.2045$ & $18964$ \\
$20000$ & $0.033789$ & $5.2638$ & $0.1899$ & $18840$ \\
$25000$ & $0.028092$ & $5.3407$ & $0.1873$ & $18694$ \\
\hline
\end{tabular}
\caption{Numerical values of $\epsilon_k$, $C_k = e^{\,c-\epsilon_k}$, $1/C_k$, 
and the corresponding analytic threshold $N_k$.}\label{ktable}
\end{table}

\newpage

\section{Appendix II}
\noindent
{\bf Another justification for Hardy-Littlewood Formula}

\noindent
In the following we provide a short heuristic justification of Hardy-Littlewood formula which is different from Remark \ref{rem:HL}. As before, it is not to be considered as a rigorous proof.

\vspace{1em}
\noindent
For an even integer greater than $5$, we wish to count the number of cases for which $x=p+(x-p)$, where both $p$ and $x-p$ are odd primes. Let $D(x)$ be the number of ways that $x$ is represented as the sum of two odd primes (where $p+q$ and $q+p$ are considered as different representations), then the following is a conjecture by Hardy and Littlewood:
$$D(x)\sim 2 \frac{C(x) x}{(\log\, x)^2},\ \text{where } C(x)=\kappa(x)\, \Pi_2,\ \kappa(x)=\prod\limits_{\Div{p}{x},\, p>2} \frac{p-1}{p-2}\text{ and }\Pi_2= \prod\limits_{p>2} \left( 1-\frac{1}{(p-1)^2}\right).$$

\noindent
Note that the product $\Pi_2$ is convergent and the constant $0.660161858...$ is known as the {\em twin-prime constant}. 

Let $x$ be an even positive integer ($>5$) such that $x=y+(x-y)$. Let $\pi(x)$ denote the number of primes less than or equal to $x$. Now the probability that $y$ is an odd prime is $\frac{\pi(x)-1}{x}$ and that $x-y$ is an odd prime is also $\frac{\pi(x)-1}{x}$. So the probability that both of them are odd primes would be $\left(\frac{\pi(x)-1}{x}\right)^2$ if the choices would be independent. But they are not independent as they are related by $y+(x-y)=x$.

\vspace{1em}
\noindent
Now $y$ and $(x-y)$ are odd prime numbers if they are not divisible by earlier odd primes. So, in the case, where a prime $p\nmid x$, we have to exclude two cases from its residue list, namely, $\nMod{y}{0}{p}$ and $\nMod{y}{x}{p}$. Thus the actual density is $\frac{p-2}{p}$ for each block of $p$ positive integers before $x$, whereas for independent choice of $y$ and $x-y$ it would be $\left(\frac{p-1}{p}\right)^2$. So the correction factor for each prime $p$ is $$\frac{\frac{p-2}{p}}{\left(\frac{p-1}{p}\right)^2}\ =\ \frac{p(p-2)}{(p-1)^2}.$$
This leads to the product (for all primes $2<p<\sqrt{x}$, $\nDiv{p}{x}$):
$$\prod\limits_{p\nmid x,\, 2<p<\sqrt{x}} \frac{p(p-2)}{(p-1)^2}=\frac{\prod\limits_{p\nmid x,\,2<p<\sqrt{x}} \left(1-\frac{2}{p}\right)}{\prod\limits_{p\nmid x,\,2<p<\sqrt{x}} \left(1-\frac{1}{p}\right)^2}.$$

\vspace{1em}
\noindent
Similarly, for $p\mid x$, the actual density is $\frac{p-1}{p}$ whereas the independent choice yields $\left(\frac{p-1}{p}\right)^2$. Thus the correction factor in this case is $\frac{p}{p-1}$ which gives the product 
$$\frac{1}{\prod\limits_{p\mid x,\,2<p<\sqrt{x}} \left(1-\frac{1}{p}\right)}.$$

Therefore the required number is 
$$x\,\left(\frac{\pi(x)-1}{x}\right)^2\ \frac{1}{\prod\limits_{p\mid x,\,2<p<\sqrt{x}} \left(1-\frac{1}{p}\right)}\ \frac{\prod\limits_{p\nmid x,\,2<p<\sqrt{x}} \left(1-\frac{2}{p}\right)}{\prod\limits_{p\nmid x,\,2<p<\sqrt{x}} \left(1-\frac{1}{p}\right)^2}$$
$\displaystyle{=\ \frac{(\pi(x)-1)^2}{x}\, \prod\limits_{p\mid x,\,2<p<\sqrt{x}} \left(1-\frac{1}{p}\right)\, \frac{\prod\limits_{p\nmid x,\,2<p<\sqrt{x}} \left(1-\frac{2}{p}\right)}{\, \prod\limits_{p\mid x,\,2<p<\sqrt{x}} \left(1-\frac{1}{p}\right)^2\, \prod\limits_{p\nmid x,\,2<p<\sqrt{x}} \left(1-\frac{1}{p}\right)^2}}$\\[1em]
$\displaystyle{=\ \frac{(\pi(x)-1)^2}{x}\, \frac{\prod\limits_{p\mid x,\,2<p<\sqrt{x}} \left(1-\frac{1}{p}\right)}{\prod\limits_{p\mid x,\,2<p<\sqrt{x}} \left(1-\frac{2}{p}\right)}\, \frac{\, \prod\limits_{p\mid x,\,2<p<\sqrt{x}} \left(1-\frac{2}{p}\right)\, \prod\limits_{p\nmid x,\, 2<p<\sqrt{x}} \left(1-\frac{2}{p}\right)}{\, \prod\limits_{2<p<\sqrt{x}} \left(1-\frac{1}{p}\right)^2}}$\\[1em]
$\displaystyle{=\ \frac{(\pi(x)-1)^2}{x}\, \prod\limits_{p\mid x,\,2<p<\sqrt{x}} \frac{p-1}{p-2}\,  \prod\limits_{2<p<\sqrt{x}} \frac{p(p-2)}{(p-1)^2}}$\\[1em]
$\displaystyle{=\ \frac{(\pi(x)-1)^2}{x}\, \prod\limits_{p\mid x,\,2<p<\sqrt{x}} \frac{p-1}{p-2}\, \prod\limits_{2<p<\sqrt{x}} \left(1-\frac{1}{(p-1)^2}\right)}$.

\vspace{1em}
\noindent
Now we notice that if there are $p\mid x$ such that $p>\sqrt{x}$. Then it can be at most one as then $\frac{x}{p}<\sqrt{x}$. Thus $$\prod\limits_{p\mid x,\,p\geq \sqrt{x}} \frac{p-1}{p-2} = \prod\limits_{p\mid x,\,p>\sqrt{x}} \frac{p-1}{p-2}\rightarrow 1\text{ as } x\rightarrow\infty.$$
Thus $\displaystyle{\prod\limits_{p\mid x,\,2<p<\sqrt{x}} \frac{p-1}{p-2}\sim \prod\limits_{p\mid x,\, p>2} \frac{p-1}{p-2}=\kappa(x)}$.

\vspace{1em}
\noindent
Moreover, $\Pi_2=\displaystyle{\prod\limits_{p>2} \left(1-\frac{1}{(p-1)^2}\right)}$ is a convergent series. Thus $\displaystyle{\prod\limits_{2<p<\sqrt{x}} \left(1-\frac{1}{(p-1)^2}\right)}\rightarrow \Pi_2$ as $x\rightarrow\infty$.

\vspace{1em}
\noindent
So, we have, 
$$\frac{1}{2} D(x)\sim \frac{C(x) (\pi(x)-1)^2}{x},$$ 
where $C(x)=\prod\limits_{\Div{p}{x},\, p>2} \frac{p-1}{p-2}\ \prod\limits_{p>2} \left( 1-\frac{1}{(p-1)^2}\right)\ =\ \kappa(x)\, \Pi_2$.

\vspace{1em}
Finally, since $\pi(x)\sim \frac{x}{\log x}$ by Prime number theorem, we have
$$D(x)\sim 2\frac{C(x)\, x}{(\log x)^2}$$
which is exactly the formula conjectured by Hardy and Littlewood. 
\end{document}